\documentclass[reqno,a4paper,11pt]{amsart}
\usepackage[all,poly]{xy}
\usepackage{amsfonts}
\usepackage{amssymb}
\usepackage{amsmath}
\usepackage{color}
\usepackage[pagebackref,colorlinks]{hyperref}
\usepackage{enumerate}
\usepackage{comment}
\usepackage{tikz}
\makeatletter
\newcommand\mathcircled[1]{%
  \mathpalette\@mathcircled{#1}%
}
\newcommand\@mathcircled[2]{%
  \tikz[baseline=(math.base)] \node[draw,circle,inner sep=1pt,minimum size=0.8cm] (math) {$\m@th#1#2$};%
}
\makeatother
     
\theoremstyle{plain}
\newtheorem {lemma}{Lemma}[section]
\newtheorem {proposition}[lemma]{Proposition}
\newtheorem {theorem}[lemma]{Theorem}
\newtheorem {corollary}[lemma]{Corollary}

\theoremstyle{definition}
\newtheorem {definition}[lemma]{Definition}
\newtheorem {remark}[lemma]{Remark}
\newtheorem {example}[lemma]{Example}

\newcommand{\N}{\mathbb{N}}
\newcommand{\Z }{\mathbb{Z}}
\newcommand{\B}{\mathcal{B}}
\newcommand{\C}{\mathcal{C}}
\newcommand{\D}{\mathcal{D}}

\newcommand{\z}{\langle \mathcal{X} \rangle}

\newcommand{\Mat}{\operatorname{\mathbb{M}}}
\newcommand{\Mod}{\operatorname{Mod}}

\newcommand{\irr}{\operatorname{irr}}

\newcommand{\Mf}{\mathfrak{M}}
\newcommand{\V}{\mathcal{V}}

\newcommand{\Mid}{\operatorname{\hspace{0.07cm}\big|\hspace{0.11cm}}}
\def\l{\langle}
\def\r{\rangle}
\newcommand{\blue}{\operatorname{blue}}
\newcommand{\red}{\operatorname{red}}

\newcommand{\sign}{\operatorname{sign}}
\newcommand{\gr}{\operatorname{gr}}

\title{On the homogeneous zero components of Leavitt algebras}

\author{Raimund Preusser}
\address{School of Mathematics and Statistics, Nanjing University of Information Science \& Technology, China}
\email{raimund.preusser@gmx.de}
\date{}

\makeatletter
\@namedef{subjclassname@2020}{%
  \textup{2020} Mathematics Subject Classification}
\makeatother

\subjclass[2020]{16S88}
\keywords{Leavitt algebra, Bergman algebra, Graded ring theory} 

\begin{document}

\begin{abstract} 
We prove that the zero component $L(m,n)_0$ of a Leavitt algebra $L(m,n)$ with respect to the canonical grading is a direct limit $\varinjlim_{z}L(m,n)_{0,z}$, where each algebra $L(m,n)_{0,z}$ is a free product of two Bergman algebras. For the special case $m=1,n>1$, one recovers the known result that the zero component $L(1,n)_0$ is a direct limit of matrix algebras. Moreover, we show that $L(m,n)_0$ has the IBN property.
\end{abstract}

\maketitle


\section{Introduction}
Recall that a ring $R$ has the \textit{Invariant Basis Number (IBN) property} if there are no positive integers $m\neq n$ such that the free right $R$-modules $R^m$ and $R^n$ are isomorphic. In the 1950s and 60s, William Leavitt \cite{vitt56,vitt57,vitt62,vitt65} studied rings without the IBN property, leading to the construction of the algebras \(L(m,n)\) that now bear his name. For positive integers $m$ and $n$, the \textit{Leavitt algebra} $L(m,n)$ is universal with the property \(L(m,n)^m \cong L(m,n)^n\) and hence fails to have the IBN property. 

For decades, Leavitt algebras served primarily as pivotal examples in ring and module theory. Their resurgence and transformation into a central area of modern research began with their reinterpretation as algebraic analogues of Cuntz-Krieger \(C^*\)-algebras. This link was made explicit through the groundbreaking work of Abrams and Aranda Pino \cite{aap05} and Ara, Moreno and Pardo \cite{Ara_Moreno_Pardo}, who independently introduced \textit{Leavitt path algebras} \(L(E)\) associated to directed graphs \(E\). Subsequently, Roozbeh Hazrat \cite{hazrat13} generalised the Leavitt path algebras of directed graphs by introducing \textit{weighted Leavitt path algebras} $L(E,w)$ associated to weighted graphs $(E,w)$.

The framework of Leavitt path algebras vastly generalised Leavitt's original construction. One recovers the Leavitt algebra \(L(1,n)\) as the Leavitt path algebra of the graph with one vertex and $n$ loops (the ``rose with \(n\) petals"). More generally, one recovers the Leavitt algebra $L(m,n)$ as the Leavitt path algebra of the weighted graph with one vertex and $n$ loops of weight $m$ (the ``weighted rose with \(n\) petals of weight $m$"). This perspective embedded Leavitt algebras into a broader and more dynamical context, connecting them to symbolic dynamics, noncommutative geometry, and the theory of groupoids.

Leavitt path algebras of directed graphs have a $\Z$-grading, cf. \cite[\S 2.1]{abrams-ara-molina}, which is sometimes called the \textit{standard grading}. 
It is known that for a finite graph $E$, the zero component $L(E)_0$ with respect to the standard grading is a direct limit of matricial algebras over the underlying field $K$, see \cite[Corollary 2.1.16]{abrams-ara-molina}. For the Leavitt algebras $L(1,n)$, this yields that the zero component $L(1,n)_0$ is a direct limit of matrix algebras.

Leavitt path algebras of weighted graphs have a $\Z^\lambda$-grading where $\lambda$ is the supremum of all edge weights (cf. \cite[p. 165]{Raimund6}). This grading is called the \textit{standard grading} of a weighted Leavitt path algebra. A structural description of the corresponding zero component remains an open problem (cf. \cite[Problem 4 in \S 12]{Raimund6}). In this paper we make a first step towards this general problem by describing the zero component of the Leavitt algebras $L(m,n)$ with respect to the canonical, coarser $\Z$-grading (see the next paragraph).

With respect to the \textit{standard grading} of $L(m,n)$ (i.e. the $\Z^m$-grading obtained by viewing $L(m,n)$ as Leavitt path algebra of the weighted rose with $n$ petals of weight $m$), the degrees of the generators $x_{ij}$ and $y_{ji}$ (see \S 3.1) are given by 
\[\deg(x_{ij})=\alpha_i\quad\text{ and }\quad \deg(y_{ji})=-\alpha_i,\]
where $\alpha_i$ denotes the element of $\Z^m$ whose $i$-th component is $1$ and whose other components are $0$. In this paper we consider another grading of $L(m,n)$, namely the $\Z$-grading obtained by setting 
\[\deg(x_{ij})=1\quad\text{ and }\quad \deg(y_{ji})=-1.\]
We call this grading the \textit{canonical grading} of $L(m,n)$.

It is easy to see that $L(m,n)$ is strongly graded with respect to the canonical grading. Note that if $m=1$, then the canonical grading and the standard grading of $L(m,n)$ are identical. In general, the canonical grading is \textit{coarser} than the standard grading, i.e. there exists a group homomorphism $\phi:\Z^m\to \Z$ such that $L(m,n)_{(k_1,\dots,k_m)}\subseteq L(m,n)_{\phi(k_1,\dots,k_m)}$ for any $(k_1,\dots,k_m)\in\Z^m$. Hence the zero component of $L(m,n)$ with respect to the standard grading is a subalgebra of the zero component of $L(m,n)$ with respect to the canonical grading. 

Our main results are as follows. We prove that the zero component $L(m,n)_0$ with respect to the canonical grading is isomorphic to the free product $A(m,n)\ast A(n,m)$, where for any positive integers $m$ and $n$, $A(m,n)$ is the algebra generated by symbols $e^{p,k,l}_{ij}~(1\leq p,~1\leq k,l\leq n,~1\leq i,j\leq m^p)$ subject to the relations (in matrix form)
\begin{enumerate}[(i)]
\medskip
\item $e^{p,k,l}e^{p,k',l'}=\delta_{l,k'}e^{p,k,l'}$ and
\medskip
\item $\sum_{k=1}^ne^{p,k,k}=\bigoplus^m e^{p-1,1,1}$,
\medskip
\end{enumerate}
where $e^{0,1,1}$ is the $1\times 1$ matrix whose only entry is one. Moreover, we show that $A(m,n)$ is a direct limit of Bergman algebras in the sense of \cite{Raimund5}. If $m=1$ and $n>1$, then $A(m,n)$ is a direct limit of matrix algebras while $A(n,m)\cong K$ (see Remark \ref{rmkBHmnzenq}). In this way one can recover the above-mentioned result that the zero component $L(1,n)_0$ is a direct limit of matrix algebras. 

Recall that the \textit{$\V$-monoid} $\V(R)$ of an (associative and unital) ring $R$ is the abelian monoid of isomorphism classes $[P]$ of finitely generated projective right $R$-modules, with addition defined by $[P]+[Q]=[P\oplus Q]$. The \textit{graded $\V$-monoid} $\V^{\gr}(R)$ of a graded ring $R$ is defined analogously using graded projective modules and graded isomorphism classes. Utilizing the description of $A(m,n)$ as a direct limit of Bergman algebras, we reaffirm that 
\[\V(L(m,n)_0)\cong\V^{\gr}(L(m,n)),\]
which also follows from Dade's Theorem because $L(m,n)$ is strongly graded. Furthermore, we prove that, in contrast to the full algebra $L(m,n)$, the zero component $L(m,n)_0$ has the IBN property.

The rest of the paper is organised as follows: In Section 2, we recall some preliminary results which are used in the next two sections. In Section 3, we recall the definition of the Leavitt algebra $L(m,n)$ and show that $L(m,n)_0$ is isomorphic to a free product $L(m,n)_0^{xy}\ast L(n,m)_0^{xy}$. Moreover, we obtain a linear basis for $L(m,n)_0^{xy}$, which we use in the next section to show that $L(m,n)_0^{xy}\cong A(m,n)$. In Section 4, we prove that $L(m,n)_0^{xy}\cong A(m,n)$ and show that $A(m,n)$ is a direct limit of Bergman algebras. We also obtain a presentation for $\V(L(m,n)_0)$ and use it to show that $L(m,n)_0$ has the IBN property.

\section{Preliminaries}

Throughout the paper $K$ denotes a fixed field. By a ring (respectively algebra) we mean an associative and unital ring (respectively $K$-algebra). By an ideal we mean a two-sided ideal, and by a module we mean a right module. 

$\N$ denotes the set of positive integers, $\N_0$ the set of nonnegative integers and $\Z$ the set of integers.

\subsection{Graded algebras}
Recall that a \textit{$\Z$-graded algebra} is an algebra $A$ equipped with a direct sum decomposition of its underlying vector space 
\[A=\bigoplus\limits_{n\in\Z}A_n\]
where each $A_n$ is a subspace of $A$ and $A_mA_n\subseteq A_{m+n}$ for any $m,n\in \Z$ (here $A_mA_n$ denotes the linear span of all products $a_ma_n$ where $a_m\in A_m$ and $a_n\in A_n$). If $A_mA_n=A_{m+n}$ for any $m,n\in \Z$, then $A$ is called \textit{strongly graded}.

The set $A^h =\bigcup_{n\in \Z}A_n$ is called the set of \textit{homogeneous elements} of $A$. For each $n\in \Z$, the subspace $A_n$ is called the \textit{$n$-component} of $A$ and the nonzero elements of
$A_n$ are called \textit{homogeneous} of degree $n$. We write $\deg(a) = n$ if $a\in A_n\setminus\{0\}$. It is well-known that the $0$-component $A_0$ is a subalgebra of $A$, see for example \cite[Proposition 1.1.1]{hazrat16}.

\subsection{Words}
Let $X$ be a set. By a \textit{word} over $X$ we mean a finite sequence $w=x_1\dots x_n$ where $n\geq 0$ and $x_1,\dots,x_n\in X$. We call the integer $n$ the \textit{length} of $w$ and denote it by $|w|$. We denote the set of all words over $X$ (including the empty word $\emptyset$) by $\langle X\rangle$. Together with concatenation of words, $\langle X\rangle$ becomes a monoid, which can be identified with the free monoid on $X$. If $w,w'\in \langle X\rangle$, then $w$ is called
\begin{itemize}
\medskip
\item a \textit{prefix} of $w'$ if there is a $v\in\langle X\rangle$ such that $wv=w'$,
\medskip
\item a \textit{suffix} of $w'$ if there is a $u\in\langle X\rangle$ such that $uw=w'$, and 
\medskip
\item a \textit{subword} of $w'$ if there are $u,v\in\langle X\rangle$ such that $uwv=w'$.
\end{itemize}

\subsection{Linear transformations between countably infinite-dimensional vector spaces}
We denote the $K$-vector space
\vspace{0.2cm}
\[
\left\{ \begin{pmatrix} x_1 \\ x_2 \\ \vdots \end{pmatrix} \mid x_i \in K,\; x_i = 0 \text{ for all but finitely many } i \right\}
\]
\phantom{s}\\
by $\bigoplus_{\mathbb{N}} K$. The \textit{standard basis} for $\bigoplus_{\mathbb{N}} K$ is the ordered basis $(e_1,e_2,\dots)$ where for each $j\in \N$, $e_j$ is the vector whose $j$-th component is $1$ and whose other components are $0$.

We denote the set of all column-finite \(\mathbb{N}\times \mathbb{N}\) matrices over \(K\) by \(\operatorname{Mat}_{\mathbb{N}}(K)\).  
Every matrix \(A \in \operatorname{Mat}_{\mathbb{N}}(K)\) defines a linear transformation
\[
\phi_A \colon \bigoplus_{\mathbb{N}} K \to \bigoplus_{\mathbb{N}} K
\]
by left multiplication.

The lemma below is easy to check.

\begin{lemma}\label{lemlintrans1}
If \(\phi  \colon \bigoplus_{\mathbb{N}} K \to \bigoplus_{\mathbb{N}} K\) is a linear transformation, then there is a unique matrix \(A \in \operatorname{Mat}_{\mathbb{N}}(K)\) such that \(\phi  = \phi_A\), namely the matrix $A$ whose \(j\)-th column equals $\phi (e_j)$.
\end{lemma}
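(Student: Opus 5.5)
We prove existence and uniqueness separately, using only that the standard basis $(e_1,e_2,\dots)$ is a basis of $\bigoplus_{\mathbb{N}} K$ and that a linear map is determined by its values on a basis.

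\emph{Existence.} Let $A$ be the $\mathbb{N}\times\mathbb{N}$ matrix whose $j$-th column is the vector $\phi(e_j)\in\bigoplus_{\mathbb{N}}K$, so that $A_{ij}$ is the $i$-th component of $\phi(e_j)$.
\begin{enumerate}[(1)]
\item Each column of $A$ lies in $\bigoplus_{\mathbb{N}}K$, so it has only finitely many nonzero entries. Hence $A$ is column-finite and $A\in\operatorname{Mat}_{\mathbb{N}}(K)$.
\item Left multiplication $v\mapsto Av$ is well defined on $\bigoplus_{\mathbb{N}}K$. For $v$ with finite support, $(Av)_i=\sum_j A_{ij}v_j$ is a finite sum. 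Moreover, $Av=\sum_j v_j\,(\text{$j$-th column of }A)$ is a finite linear combination of finitely supported vectors, so $Av\in\bigoplus_{\mathbb{N}}K$.
\item The map $\phi_A$ is linear, and $\phi_A(e_j)$ is the $j$-th column of $A$, which is $\phi(e_j)$.
\item Since $\phi$ and $\phi_A$ are linear maps that agree on a basis, $\phi=\phi_A$.
\end{enumerate}

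\emph{Uniqueness.} Suppose $\phi_A=\phi_B$ for some $A,B\in\operatorname{Mat}_{\mathbb{N}}(K)$. For each $j$, the $j$-th column of $A$ is $Ae_j=\phi_A(e_j)=\phi_B(e_j)=Be_j$, the $j$-th column of $B$. Hence $A=B$. This also shows that the only possible choice of $A$ is the matrix whose $j$-th column is $\phi(e_j)$.

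The only point needing care is step (2): column-finiteness makes each entry of $Av$ a finite sum, and together with finite support of $v$ it ensures $Av$ again has finite support.
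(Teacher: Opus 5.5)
Your proof is correct and is exactly the routine verification the paper intends; the paper gives no proof and only remarks that the lemma is easy to check. One small slip in your closing remark: each entry $(Av)_i=\sum_j A_{ij}v_j$ is a finite sum because $v$ has finite support (as you correctly say in step (2)), whereas column-finiteness is what guarantees that $Av=\sum_j v_j\,Ae_j$ again has finite support.
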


The matrix \(A\) in Lemma \ref{lemlintrans1} is called the \textit{standard matrix representation} of the linear transformation \(\phi \).

If \(V\) is a \(K\)-vector space with a countably infinite ordered basis \(\mathcal{B}\), then the \textit{coordinate map} 
\[
[\cdot]_{\mathcal{B}} \colon V \to \bigoplus_{\mathbb{N}} K
\]
is an isomorphism, sending each \(v \in V\) to its coordinate vector with respect to \(\mathcal{B}\).

Lemma \ref{lemlintrans2} below follows from Lemma \ref{lemlintrans1} above.
\begin{lemma}\label{lemlintrans2}
Let \(V\) and \(W\) be \(K\)-vector spaces with countably infinite ordered bases \(\mathcal{B} = (b_1, b_2, \dots)\) and \(\mathcal{C}\), respectively. For any linear map \(\phi  \colon V \to W\) there exists a unique matrix \(A \in \operatorname{Mat}_{\mathbb{N}}(K)\) such that
\[
\phi =[\cdot]_{\mathcal{C}}^{-1}\circ \phi_A\circ[\cdot]_{\mathcal{B}},
\]
namely the matrix $A$ whose \(j\)-th column is \([\phi (b_j)]_{\mathcal{C}}\).
\end{lemma}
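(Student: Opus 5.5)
The plan is to reduce Lemma \ref{lemlintrans2} directly to Lemma \ref{lemlintrans1} by conjugating with the coordinate isomorphisms. Since $\mathcal{B}$ and $\mathcal{C}$ are countably infinite ordered bases, $[\cdot]_{\mathcal{B}}\colon V\to\bigoplus_{\mathbb{N}}K$ and $[\cdot]_{\mathcal{C}}\colon W\to\bigoplus_{\mathbb{N}}K$ are linear isomorphisms. Given a linear map $\phi\colon V\to W$, I will consider
\[
\psi=[\cdot]_{\mathcal{C}}\circ\phi\circ[\cdot]_{\mathcal{B}}^{-1}\colon \bigoplus_{\mathbb{N}}K\to\bigoplus_{\mathbb{N}}K,
\]
which is a composite of linear maps and hence linear.

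For existence, Lemma \ref{lemlintrans1} gives a unique $A\in\operatorname{Mat}_{\mathbb{N}}(K)$ with $\psi=\phi_A$. Its $j$-th column is $\psi(e_j)$. Since $[b_j]_{\mathcal{B}}=e_j$, we have $[\cdot]_{\mathcal{B}}^{-1}(e_j)=b_j$, so $\psi(e_j)=[\phi(b_j)]_{\mathcal{C}}$, which is the claimed description of the columns. Composing $\psi=\phi_A$ on the left with $[\cdot]_{\mathcal{C}}^{-1}$ and on the right with $[\cdot]_{\mathcal{B}}$ then yields
\[
\phi=[\cdot]_{\mathcal{C}}^{-1}\circ\phi_A\circ[\cdot]_{\mathcal{B}}.
\]

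For uniqueness, suppose some $A'\in\operatorname{Mat}_{\mathbb{N}}(K)$ also satisfies $\phi=[\cdot]_{\mathcal{C}}^{-1}\circ\phi_{A'}\circ[\cdot]_{\mathcal{B}}$. Conjugating back with the same isomorphisms gives $\phi_{A'}=\psi=\phi_A$. The uniqueness part of Lemma \ref{lemlintrans1} then forces $A'=A$.

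There is no genuine obstacle here. The only point needing care is the bookkeeping of the identity $[b_j]_{\mathcal{B}}=e_j$, which is what identifies the columns of $A$.
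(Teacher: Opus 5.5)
Your proposal is correct and matches the paper's approach: the paper simply states that this lemma follows from Lemma \ref{lemlintrans1}, and your conjugation by the coordinate isomorphisms, together with the observation $[b_j]_{\mathcal{B}}=e_j$ to identify the columns, is exactly that deduction written out in full.
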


The matrix \(A\) in Lemma \ref{lemlintrans2} is called the \textit{matrix representation} of \(\phi \) with respect to the bases \(\mathcal{B}\) and \(\mathcal{C}\).

\subsection{The monoid $\Mf_n(R)$}
Let $R$ be a ring. If $m,n\in\N$, then $\Mat_{m\times n}(R)$ denotes the set of all $m\times n$ matrices with entries in $R$. If $A\in \Mat_{m\times n}(R)$ and $t\in \N$, then $\bigoplus^t A$ denotes the matrix 
\[\begin{pmatrix}
A&&\\&\ddots&\\&&A
\end{pmatrix}\in\Mat_{tm\times tn}(R)
\]
with $t$ copies of $A$ on the diagonal and zeroes elsewhere.

In Definition \ref{def23} below, we define for fixed $n\in \N$ a multiplication on the set $\bigcup_{i,j\in \N_0} \Mat_{n^i\times n^j}(R)$ which extends the usual matrix multiplication and makes $\bigcup_{i,j\in \N_0} \Mat_{n^i\times n^j}(R)$ a monoid (cf. \cite[Definition 7.1.9]{Raimund6}). 

\begin{definition}\label{def23}
Let $R$ be a ring and $n\in\mathbb{N}$. We set
\[\Mf=\Mf_n(R):=\bigcup_{i,j\in \N_0} \Mat_{n^i\times n^j}(R).\] 
We define a multiplication $\star$ on $\Mf$ as follows. Let $A,B\in \Mf$. Then there are $i,j,k,l\in\N_0$ such that $A\in \Mat_{n^i\times n^j}(R)$ and $B\in \Mat_{n^k\times n^l}(R)$.
The product $A\star B$ is defined by
\[A\star B=\begin{cases}
(\bigoplus^{n^{k-j}} A)B,\quad&\text{ if }j\leq k,\\[5pt]
A(\bigoplus^{n^{j-k}} B),\quad&\text{ if }j\geq k.
\end{cases}\]
\end{definition}

With a little effort one can show that $(\Mf,\star)$ is a monoid whose identity element is $(1)\in \Mat_{1\times 1}(R)$. Moreover, 
\[A\star(B+C)=A\star B+A\star C\quad\text{ and }\quad (B+C)\star A=B\star A+C\star A\]
whenever $B+C$ is defined, i.e. whenever $B$ and $C$ have the same size. 

For any rational number $x$ we denote by $\lceil x \rceil$ the smallest integer $m$ such $x\leq m$. Moreover, for any integer $n$ we define the function $\Mod_n:\N\to \{1,\dots,n\}$ by
\[\Mod_n(an+b)=b\]
for any $a\geq 0$ and $b\in\{1,\dots,n\}$.
 
The following lemma, which will be used in Section 4, can be easily proved by induction.
\begin{lemma}\label{lemeasyind}
Let $u^{(1)},\dots,u^{(k)}\in \Mat_{n\times 1}(R)$ and $v^{(1)},\dots,v^{(k)}\in \Mat_{1\times n}(R)$. Then the entry of the matrix 
\[u^{(k)}\star\dots \star u^{(1)}\star v^{(1)}\star\dots\star v^{(k)} \in\Mat_{n^k\times n^k}(R)\]
at position $(i,j)$, where $1\leq i,j\leq n^k$, equals
\[u^{(k)}_{\Mod_n(i)}u^{(k-1)}_{\Mod_n(\left\lceil\frac{i}{n}\right\rceil)}\dots u^{(1)}_{\Mod_n(\left\lceil\frac{i}{n^{k-1}}\right\rceil)}v^{(1)}_{\Mod_n(\left\lceil\frac{j}{n^{k-1}}\right\rceil)}\dots v^{(k-1)}_{\Mod_n(\left\lceil\frac{j}{n}\right\rceil)}v^{(k)}_{\Mod_n(j)}.\]
\end{lemma}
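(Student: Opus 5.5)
We prove Lemma \ref{lemeasyind} by induction on $k$, first making explicit how the product is formed under $\star$. For column vectors $u^{(1)},\dots,u^{(k)}$ (of size $n\times 1$), the product $U_k:=u^{(k)}\star\dots\star u^{(1)}$ is computed from the right. Assume $U_{k-1}:=u^{(k-1)}\star\dots\star u^{(1)}\in\Mat_{n^{k-1}\times 1}(R)$. Then by Definition \ref{def23} (case $j=1\geq k=0$ is not relevant; here $u^{(k)}\in\Mat_{n\times 1}$ has $j=0$ and $U_{k-1}$ has $k-1\geq 0$ row exponent) we get
\[U_k=\Bigl(\bigoplus^{n^{k-1}}u^{(k)}\Bigr)U_{k-1}\in\Mat_{n^k\times 1}(R).\]
Similarly $V_k:=v^{(1)}\star\dots\star v^{(k)}=V_{k-1}\bigl(\bigoplus^{n^{k-1}}v^{(k)}\bigr)\in\Mat_{1\times n^k}(R)$. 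Associativity of $\star$ (stated after Definition \ref{def23}) lets us regroup the whole expression as $U_k\star V_k$, and since both have column/row exponent $0$, this is the ordinary product $U_kV_k$. Therefore the $(i,j)$ entry is $(U_k)_i(V_k)_j$.

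It remains to prove two claims by induction:
\[(U_k)_i=u^{(k)}_{\Mod_n(i)}u^{(k-1)}_{\Mod_n(\lceil i/n\rceil)}\cdots u^{(1)}_{\Mod_n(\lceil i/n^{k-1}\rceil)},\qquad (V_k)_j=v^{(1)}_{\Mod_n(\lceil j/n^{k-1}\rceil)}\cdots v^{(k)}_{\Mod_n(j)}.\]
The base case $k=1$ is trivial. For the induction step on $U_k$, note that the block diagonal matrix $\bigoplus^{n^{k-1}}u^{(k)}$ has in row $i$ a single (potentially) nonzero entry. It sits in column $\lceil i/n\rceil$ and equals $u^{(k)}_{\Mod_n(i)}$. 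Hence $(U_k)_i=u^{(k)}_{\Mod_n(i)}(U_{k-1})_{\lceil i/n\rceil}$. Applying the induction hypothesis with $i'=\lceil i/n\rceil$ and the identity $\lceil\lceil i/n\rceil/n^{t}\rceil=\lceil i/n^{t+1}\rceil$ gives the formula. The argument for $V_k$ is symmetric: column $j$ of $\bigoplus^{n^{k-1}}v^{(k)}$ has its single entry $v^{(k)}_{\Mod_n(j)}$ in row $\lceil j/n\rceil$.

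The only point requiring care is the bookkeeping. First, one must check that $\star$-products are evaluated with the correct block-sum on the correct factor, i.e.\ which case of Definition \ref{def23} applies. Second, one needs the nested-ceiling identity, which holds since for a positive integer $i$ we have $\lceil \lceil i/a\rceil/b\rceil=\lceil i/(ab)\rceil$. Finally, the entries lie in a possibly noncommutative $R$, so the order of the factors must be preserved; the recursions above do preserve it, since $u^{(k)}$ is always multiplied on the left and $v^{(k)}$ on the right.
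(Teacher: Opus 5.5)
Your proof is correct and is the induction on $k$ that the paper indicates; the paper gives no details beyond ``easily proved by induction.'' Regrouping the product as $U_kV_k$ via associativity and observing that row $i$ of $\bigoplus^{n^{k-1}}u^{(k)}$ has its only entry $u^{(k)}_{\Mod_n(i)}$ in column $\lceil i/n\rceil$ (and dually for the columns of $\bigoplus^{n^{k-1}}v^{(k)}$) is exactly the bookkeeping needed. The one blemish is the garbled parenthetical about which case of Definition \ref{def23} applies: the case used is simply that the column exponent $0$ of $u^{(k)}$ is at most the row exponent $k-1$ of $U_{k-1}$.
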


\section{Leavitt algebras}

In this section we fix two positive integers $m$ and $n$. 

\subsection{The algebra $L(m,n)$}
The algebra presented by the generating set\\ \[\mathcal{X}=\{x_{ij},y_{ji}\mid 1\leq i\leq m, 1\leq j\leq n\}\]
 and the relations
\begin{align*}
\sum_{j=1}^nx_{ij}y_{ji'}=\delta_{ii'}~(1\leq i,i'\leq m),\quad\sum_{i=1}^my_{ji}x_{ij'}=\delta_{jj'}~(1\leq j,j'\leq n)
\end{align*}
is called the {\it Leavitt algebra of type $(m,n)$} and is denoted by $L(m,n)$ or just by $L$. 

\begin{remark}\label{remXY}
Let $X\in \Mat_{m\times n}(L)$ be the matrix whose entry at position $(i,j)$ is $x_{ij}$, and $Y\in\Mat_{n \times m}(L)$ the matrix whose entry at position $(j,i)$ is $y_{ji}$. Clearly $X$ defines an $L$-module homomorphism $\phi_X:L^n\to L^m$ and $Y$ defines an $L$-module homomorphism $\phi_Y:L^m\to L^n$ by left multiplication. It follows from the defining relations of $L$ that
\begin{equation*}
XY=I_m\quad\text{ and }\quad YX=I_n,
\end{equation*}
where $I_m$ and $I_n$ denote the identity matrices of dimensions $m$ and $n$, respectively. Hence $\phi_X$ and $\phi_Y$ are inverse to each other and thus $L^m\cong L^n$.
\end{remark}


\subsection{The canonical grading of $L(m,n)$}
We define a $\Z$-grading on the free algebra $K\z$ generated by $\mathcal{X}$ by setting $\deg(x_{ij})=1$ and $\deg(y_{ji})=-1$. Since the elements 
\[\sum_{j=1}^nx_{ij}y_{ji'}-\delta_{ii'}~(1\leq i,i'\leq m)\quad\text{ and }\quad~\sum_{i=1}^my_{ji}x_{ij'}-\delta_{jj'}~(1\leq j,j'\leq n)\]
of $K\z$ are homogenous (of degree $0$), they generate a graded ideal $I\lhd_{gr} K\z$. Hence $L(m,n)\cong K\z/I$ has a $\Z$-grading induced by the $\Z$-grading of $K\z$ (cf. \cite[\S 1.1.5]{hazrat16}). We will refer to this grading as the \textit{canonical grading} of $L(m,n)$.

\begin{proposition}\label{propstronggrad}
With respect to the canonical grading, the Leavitt algebra $L(m,n)$ is strongly graded.
\end{proposition}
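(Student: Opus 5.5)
The plan is to use the standard criterion for strong gradings: a $\Z$-graded algebra $A$ is strongly graded if and only if $1\in A_1A_{-1}$ and $1\in A_{-1}A_1$. So I will first prove this criterion, then check the two memberships using the defining relations of $L=L(m,n)$.

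For the criterion, suppose $1\in A_1A_{-1}$ and $1\in A_{-1}A_1$. By induction on $k\geq 1$ I will show $1\in A_kA_{-k}$ and $1\in A_{-k}A_k$. Write $1=\sum_s a_sb_s$ with $a_s\in A_{k-1}$, $b_s\in A_{-(k-1)}$. Then
\[1=\sum_s a_s\cdot 1\cdot b_s\in \sum_s a_s A_1A_{-1}b_s\subseteq A_kA_{-k},\]
and the case $A_{-k}A_k$ is symmetric. This gives $1\in A_gA_{-g}$ for all $g\in\Z$. Now for any $g,h\in\Z$ and $c\in A_{g+h}$ we have
\[c=1\cdot c\in A_gA_{-g}A_{g+h}\subseteq A_gA_h.\]
Since $A_gA_h\subseteq A_{g+h}$ holds anyway, we get $A_gA_h=A_{g+h}$.

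For the memberships in $L$: from the relation $\sum_{j=1}^n x_{1j}y_{j1}=1$, each $x_{1j}$ has degree $1$ and each $y_{j1}$ has degree $-1$, so $1\in L_1L_{-1}$. From $\sum_{i=1}^m y_{1i}x_{i1}=1$, each $y_{1i}$ has degree $-1$ and each $x_{i1}$ has degree $1$, so $1\in L_{-1}L_1$. Both relations exist because $m,n\geq1$. Applying the criterion finishes the proof.

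I do not expect a real obstacle. The only care needed is in the induction: the product $A_kA_{-k}$ means the linear span of products, and the inclusions $A_{k-1}A_1\subseteq A_k$ and $A_{-1}A_{-(k-1)}\subseteq A_{-k}$ come from the grading axioms.
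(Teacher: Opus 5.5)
Your proof is correct and is essentially the paper's argument. The paper writes out the nested sums $1=\sum_{j_1,\dots,j_k} x_{1,j_1}\cdots x_{1,j_k}y_{j_k,1}\cdots y_{j_1,1}$ and $1=\sum_{i_1,\dots,i_k} y_{1,i_1}\cdots y_{1,i_k}x_{i_k,1}\cdots x_{i_1,1}$ to get $1\in L_kL_{-k}$ and $1\in L_{-k}L_k$ for all $k\geq 1$, and then cites \cite[Proposition 1.1.15(1)]{hazrat16}; your induction from the degree $\pm 1$ relations produces exactly these sums, and you prove the cited criterion yourself instead of quoting it.
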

\begin{proof}
Clearly 
\[1=\sum_{j_1,\dots,j_k=1}^n x_{1,j_1}\dots x_{1,j_k}y_{j_k,1}\dots y_{j_1,1}\in L(m,n)_{k}L(m,n)_{-k}\]
and 
\[1=\sum_{i_1,\dots,i_k=1}^m y_{1,i_1}\dots y_{1,i_k}x_{i_k,1}\dots x_{i_1,1}\in L(m,n)_{-k}L(m,n)_{k}\]
for all $k\geq 1$ by the defining relations of $L(m,n)$. It follows from \cite[Proposition 1.1.15(1)]{hazrat16} that $L(m,n)$ is strongly graded.
\end{proof}

\subsection{The basis $\mathcal{B}$ for $L(m,n)_0$}
Recall that 
\begin{align*}
\mathcal{X}=\{x_{ij},y_{ji}\mid 1\leq i\leq m, 1\leq j\leq n\}.
\end{align*} 
We denote by $S$ the reduction system 
\begin{enumerate}[(i)]
\medskip
\item $x_{in}y_{ni'}\longrightarrow\delta_{ii'}-\sum_{j=1}^{n-1}x_{ij}y_{ji'}~(1\leq i,i'\leq m)$,
\medskip
\item $y_{jm}x_{mj'}\longrightarrow\delta_{jj'}-\sum_{i=1}^{m-1}y_{ji}x_{ij'}~(1\leq j,j'\leq n)$
\medskip
\end{enumerate}
for the free algebra $K\z $. More formally, $S$ is the set consisting of all pairs $\sigma = (w_\sigma , f_\sigma )$ where $w_\sigma$ equals a monomial on the left hand side of an arrow above and $f_\sigma$ the corresponding term on the right hand side of that arrow, cf. \cite[\S 1]{bergman78}. Note that the Leavitt algebra $L(m,n)$ has the presentation
\[L(m,n)=\langle \mathcal{X}\mid w_\sigma=f_\sigma~(\sigma\in S)\rangle.\]

We denote by $\z$ the free monoid on $\mathcal{X}$, which we identify with the set of all finite words over $\mathcal{X}$. Note that the free algebra $K\z$ can be identified with the monoid algebra of $\z$. Moreover, we denote by $\z_{\irr}$ the subset of $\z$ consisting of all words that are not of the form $ww_\sigma w'$ for some $w,w'\in \z$ and $\sigma\in S$. Hence $\z_{\irr}$ consists of all words over $\mathcal{X}$ that do not contain one of the \textit{forbidden words}
\begin{align*}
x_{in}y_{ni'}~(1\leq i,i'\leq m),\quad y_{jm}x_{mj'}~(1\leq j,j'\leq n)
\end{align*}
as a subword. We denote by $K\z_{\irr}$ the $K$-subspace of $K\z$ spanned by $\z_{\irr}$. 

Recall from \cite[\S 1]{bergman78} that a finite sequence of reductions $r_1,\dots, r_i$ is called \textit{final} on  $a\in K\z$ if $r_i\dots r_1(a)\in K\z_{\irr}$. An element $a\in K\z$ is called \textit{reduction-finite} if for every infinite sequence $r_1,r_2 ,\dots$ of reductions, $r_i$ acts trivially on $r_{i-1}\dots r_1(a)$ for all sufficiently large $i$. An element $a\in K\z$ is called \textit{reduction-unique} if it is reduction-finite, and if its images under all final sequences of reductions are the same. This common value is denoted by $r_S(a)$.

\begin{theorem}\label{thmLmnbasis}
The Leavitt algebra $L(m,n)$ may be identified with the $K$-vector space $K\z_{\irr}$ made an algebra by the multiplication $a\cdot b = r_S(ab)$. In particular, the image of $\z_{\irr}$ in $L(m,n)$ is a linear basis for $L(m,n)$.
\end{theorem}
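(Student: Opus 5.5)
This follows from Bergman's Diamond Lemma \cite[Theorem 1.2]{bergman78}. The lemma says that if a reduction system $S$ for $K\z$ admits a semigroup partial order on $\z$ compatible with $S$ and satisfying the descending chain condition, and all ambiguities of $S$ are resolvable, then every element is reduction-unique. Moreover, $K\z_{\irr}$ with multiplication $a\cdot b=r_S(ab)$ is then isomorphic to $K\z/I$, where $I$ is the ideal generated by the elements $w_\sigma-f_\sigma$. Since $L(m,n)=\langle \mathcal{X}\mid w_\sigma=f_\sigma~(\sigma\in S)\rangle$, this is exactly the claim, and the basis statement follows immediately.

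\textbf{Step 1: the order.} I would order words first by length and then break ties by a suitable rule, for instance the number of occurrences of the ``large'' letters $x_{in}$ and $y_{jm}$, refined lexicographically. In fact length alone already suffices. Every monomial in $f_\sigma$ has length $0$ or $2$. The length-$0$ term is strictly shorter than $w_\sigma$. Each length-$2$ term $x_{ij}y_{ji'}$ with $j<n$, or $y_{ji}x_{ij'}$ with $i<m$, differs from $w_\sigma$ only by having a smaller index at the middle position. So I would define $u<v$ if $|u|<|v|$, or if $|u|=|v|$ and the sequence of letters of $u$ is smaller than that of $v$ in the lexicographic order. Letters are ordered so that $x_{ij}<x_{ij'}$ when $j<j'$ and $y_{ji}<y_{ji'}$ when $i<i'$, and any fixed total order is used otherwise. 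This is a monomial order compatible with concatenation. It satisfies the descending chain condition, since there are finitely many words of each length. Every monomial of $f_\sigma$ is smaller than $w_\sigma$, because the first letter is unchanged and the second letter decreases.

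\textbf{Step 2: ambiguities.} There are no inclusion ambiguities, since all $w_\sigma$ have length $2$ and are distinct. The overlap ambiguities are words of length $3$ whose first two letters and last two letters are both leading words. They come in two types, $x_{in}y_{nm}x_{mj'}$ and $y_{jm}x_{mn}y_{ni'}$; these are the main obstacle. For $x_{in}y_{nm}x_{mj'}$, reducing the left pair gives
\[\delta_{im}x_{mj'}-\sum_{j<n}x_{ij}y_{jm}x_{mj'},\]
and reducing the right pair gives
\[\delta_{nj'}x_{in}-\sum_{i'<m}x_{in}y_{ni'}x_{i'j'}.\]
I would reduce both sides fully, using the relations repeatedly, to the common expression
\[\delta_{im}x_{mj'}-\delta_{nj'}x_{in}\ \text{-type terms}+\sum_{j<n,\,i'<m}x_{ij}y_{ji'}x_{i'j'}+\dots\]
Concretely, expanding $y_{jm}x_{mj'}$ in the first expression and $x_{in}y_{ni'}$ in the second, both become
\[\delta_{im}x_{mj'}+\delta_{nj'}x_{in}-\sum_{j<n}\delta_{jj'}x_{ij}-\sum_{i'<m}\delta_{ii'}x_{i'j'}+\sum_{j<n,\,i'<m}x_{ij}y_{ji'}x_{i'j'}\ldots\]
These are matched after sorting the Kronecker-delta terms, using $\sum_{j<n}\delta_{jj'}x_{ij}=x_{ij'}-\delta_{nj'}x_{in}$, and similarly for the sum over $i'<m$. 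The second type is symmetric, with the roles of $x$ and $y$ and of $m$ and $n$ swapped.

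Once all ambiguities are resolved, the Diamond Lemma gives reduction-uniqueness and identifies $L(m,n)$ with $K\z_{\irr}$ under $a\cdot b=r_S(ab)$. Hence $\z_{\irr}$ maps to a basis of $L(m,n)$.
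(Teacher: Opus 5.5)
Your proposal is correct and follows the paper's proof: Bergman's Diamond Lemma with a length-first order and resolution of the two overlap families $x_{in}y_{nm}x_{mj'}$ and $y_{jm}x_{mn}y_{ni'}$. The paper breaks length ties by the index-sum weight $m(w)$ instead of your lexicographic rule, which is an inessential difference.

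Tidy three slips. First, length alone does not suffice, since the length-$2$ terms of $f_\sigma$ tie with $w_\sigma$. Second, in $x_{ij}y_{ji'}<x_{in}y_{ni'}$ it is the first letter $x_{ij}<x_{in}$ that decreases, not the second, and your letter order does guarantee this. Third, your displayed ``common expression'' is not what either side reduces to: the left side carries only $\delta_{im}x_{mj'}-\sum_{j<n}\delta_{jj'}x_{ij}$ and the right side only $\delta_{nj'}x_{in}-\sum_{i'<m}\delta_{ii'}x_{i'j'}$. These two agree by exactly the identity you cite together with its analogue for the sum over $i'<m$.
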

\begin{proof}
We want to apply \cite[Theorem 1.2]{bergman78}. To this end, we first show that there is a semigroup partial ordering on $\z$ which is compatible with the reduction system $S$, and then that all ambiguities of $S$ are resolvable.

For any $w=z_1\dots z_t\in \z$ we set $m(w):=m(z_1)+\dots+m(z_t)$ where
\[m(x_{ij})=i+j\quad\text{ and }m(y_{ji})=i+j\]
for any $1\leq i\leq m$ and $1\leq j\leq n$. We define a relation $\leq$ on $\z$ by 
\begin{align*}
w\leq w'~\Leftrightarrow~ &\Big[w=w'\Big]~\lor~\Big[|w|<|w'|\Big]~\lor~ \Big[|w|=|w'|~\land~m(w)<m(w')\Big].
\end{align*}
One checks easily that $\leq$ is a semigroup partial ordering on $\z$ which is compatible with the reduction system $S$ and satisfies the descending chain condition.

Next we show that all ambiguities of $S$ are resolvable. If $1\leq i\leq m$ and $1\leq j\leq n$, then 
\xymatrixcolsep{0pc}
\xymatrixrowsep{3pc}
\[\xymatrix{
&x_{in}y_{nm}x_{mj}\ar[ld]_{(i)}\ar[rd]^{(ii)}&
\\
(\delta_{im}-\sum_{j'=1}^{n-1}x_{ij'}y_{j'm})x_{mj}\ar[d]_{(ii)}
&&x_{in}(\delta_{jn}-\sum_{i'=1}^{m-1}y_{ni'}x_{i'j})\ar[d]^{(i)}
\\
\delta_{im}x_{mj}-\delta_{j\neq n}x_{ij}+\sum_{j'=1}^{n-1}\sum_{i'=1}^{m-1}x_{ij'}y_{j'i'}x_{i'j}&&\delta_{jn}x_{in}-\delta_{i\neq m}x_{ij}+\sum_{i'=1}^{m-1}\sum_{j'=1}^{n-1}x_{ij'}y_{j'i'}x_{i'j}.
}
\]
Clearly
\begin{align*}
\delta_{im}x_{mj}+\delta_{i\neq m}x_{ij}=x_{ij}=\delta_{jn}x_{in}+\delta_{j\neq n}x_{ij}
\end{align*}
and hence 
\begin{align*}
\delta_{im}x_{mj}-\delta_{j\neq n}x_{ij}=\delta_{jn}x_{in}-\delta_{i\neq m}x_{ij}.
\end{align*}
It follows that the ambiguities of the type ``$x_{in}y_{nm}x_{mj}$" are resolvable. Similarly one can show that the ambiguities of the type ``$y_{jm}x_{mn}y_{ni}$" are resolvable. Thus all ambiguities of $S$ are resolvable. The assertion of the theorem now follows from \cite[Theorem 1.2]{bergman78}. 
\end{proof}
 
We call an element of the generating set $\mathcal{X}$ an \textit{$x$-letter} if it is of the form $x_{ij}$ for some $i$ and $j$, and a \textit{$y$-letter} otherwise. We denote the submonoid of $\z$ consisting of all words with the same number of $x$-letters and $y$-letters by $\z_{0}$. Moreover, we denote the intersection $\z_{\irr}\cap \z_{0}$ by $\B$. It follows from Theorem \ref{thmLmnbasis} above that the image of $\B$ in $L(m,n)$ is a basis for the $0$-component $L(m,n)_0$.

\subsection{The free factors $L(m,n)_0^{xy}$ and $L(m,n)_0^{yx}$}
We call a nonempty word in $\z_0$ \textit{prime} if it cannot be written as a concatenation of two nonempty words from $\z_0$. The following lemma is easy to check.

\begin{lemma}\label{lemprime1}
Every nonempty word in $\z_0$ can be uniquely written as a product of prime words.
\end{lemma}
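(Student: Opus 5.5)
The plan is to use the height function on prefixes. For a word $w=z_1\dots z_t\in\z$, set $h_w(s)$ to be the number of $x$-letters minus the number of $y$-letters among $z_1,\dots,z_s$, for $0\le s\le t$. Then $h_w(0)=0$, and $w\in\z_0$ exactly when $h_w(t)=0$. Moreover, if $w=uv$, then $u\in\z_0$ forces $v\in\z_0$ as well, since the letter counts are additive.

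For existence, take a nonempty $w\in\z_0$ and let $0=s_0<s_1<\dots<s_r=t$ be the positions $s$ with $h_w(s)=0$. Cutting $w$ at each $s_i$ gives factors $w_i=z_{s_{i-1}+1}\dots z_{s_i}$, each nonempty and each in $\z_0$. Each factor $w_i$ is prime. Suppose instead $w_i=ab$ with $a,b\in\z_0$ nonempty. Then the height of $w$ after the prefix ending in $a$ equals $h_w(s_{i-1})+0=0$, at a position strictly between $s_{i-1}$ and $s_i$. This contradicts the choice of the $s_i$ as all zeros of $h_w$.

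For uniqueness, let $w=p_1\dots p_k$ be any factorisation into prime words. Since every $p_j$ lies in $\z_0$, the height vanishes at each cut point $|p_1\dots p_j|$. Hence every cut point is one of the $s_i$. It remains to show that no $s_i$ falls strictly inside some $p_j$. If one did, the prefix of $p_j$ ending there would have height $0$ relative to the start of $p_j$, so that prefix lies in $\z_0$ and is nonempty. The remaining suffix of $p_j$ is then also in $\z_0$ and nonempty, so $p_j$ is not prime, a contradiction. Therefore the cut points are exactly $s_1,\dots,s_r$, and the factorisation coincides with the one built above.

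The only delicate point is the remark that a prefix lies in $\z_0$ if and only if the height vanishes there. This is immediate from the definition, so I expect no real obstacle. Note that the argument is just a restatement of the fact that $\z_0$ is a free monoid on its prime words.
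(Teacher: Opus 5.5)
Your proof is correct. The paper gives no proof here (it says the lemma is easy to check), and your argument of cutting at the zeros of the prefix degree uses the same quantity the paper relies on in Lemmas \ref{lemprime2} and \ref{lemprime3}, so it is the standard verification; one small slip is that ``$u\in\z_0$ forces $v\in\z_0$'' requires $w=uv\in\z_0$, which is the only case where you apply it.
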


 A nonempty word in $\z_0$ is called an \textit{$xy$ word} if its first letter is an $x$-letter and its last letter is a $y$-letter. A nonempty word in $\z_0$ is called a \textit{$yx$ word} if its first letter is a $y$-letter and its last letter is an $x$-letter.

\begin{lemma}\label{lemprime2}
Every prime word in $\z_0$ is either an $xy$ word or a $yx$ word.
\end{lemma}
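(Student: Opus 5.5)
We will prove Lemma \ref{lemprime2} with a height (or balance) function on prefixes, in the style of Dyck paths. For a word $w=z_1\dots z_t\in\z$ and $0\leq s\leq t$, set
\[h(s)=\#\{r\leq s\mid z_r\text{ is an }x\text{-letter}\}-\#\{r\leq s\mid z_r\text{ is a }y\text{-letter}\},\]
so that $h(0)=0$ and $h(t)=0$ whenever $w\in\z_0$.

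The first step is to characterise primeness through $h$. A word $w\in\z_0$ of length $t\geq 1$ factors as $w=w_1w_2$ with $w_1,w_2\in\z_0$ nonempty exactly when $h(s)=0$ for some $0<s<t$. Indeed, if such an $s$ exists, then $z_1\dots z_s$ has equally many $x$- and $y$-letters, and so does $z_{s+1}\dots z_t$, because the total counts agree. The converse is immediate. Hence $w$ is prime if and only if $h(s)\neq 0$ for all $0<s<t$.

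The second step uses the fact that $h$ changes by exactly $\pm1$ at each step. Since it never vanishes strictly between $0$ and $t$, by a discrete intermediate value argument $h$ has constant sign on $\{1,\dots,t-1\}$. Note that $t\geq 2$, because a word in $\z_0$ has even length and $w$ is nonempty.

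\emph{Case $h(s)>0$ for $0<s<t$.} Then $h(1)=1$, so $z_1$ is an $x$-letter. Also $h(t-1)=1$ and $h(t)=0$, so $z_t$ is a $y$-letter. Thus $w$ is an $xy$ word.

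\emph{Case $h(s)<0$ for $0<s<t$.} Symmetrically, $z_1$ is a $y$-letter and $z_t$ is an $x$-letter, so $w$ is a $yx$ word.

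These cases are exhaustive, which proves the claim. The two cases also exclude each other, since the first letter decides which one holds.

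There is no real obstacle here. The only point needing care is the constant-sign step: if $h$ took both signs on $\{1,\dots,t-1\}$, then because it moves by unit steps it would have to hit $0$ at some intermediate index, contradicting primeness.
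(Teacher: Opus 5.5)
Your proof is correct and is essentially the paper's argument: both track the degree of prefixes (your height $h$), which changes by $\pm 1$ per letter, and use primeness to rule out a proper nonempty prefix of degree $0$. The paper phrases it slightly more directly. Assuming $z_1$ is an $x$-letter, it takes the maximal $i$ with $\deg(z_1\dots z_i)>0$, notes that $z_{i+1}$ is a $y$-letter bringing the degree to $0$, and concludes $i=k-1$ by primeness; you instead first show that $h$ has constant sign on $\{1,\dots,t-1\}$, which is the same idea.
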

\begin{proof}
Let $w=z_1\dots z_k\in\z_0$ be prime. Clearly $k>1$ because $\deg(w)=0$. Suppose that $z_1$ is an $x$-letter. Then $\deg(z_1)=1>0$. Let $1\leq i\leq k$ be maximal with the property that $\deg(z_1\dots z_i)>0$. Then clearly $z_{i+1}$ is a $y$-letter and $\deg(z_1\dots z_iz_{i+1})=0$. Since $w$ is prime, it follows that $i=k-1$ and hence $z_k$ is a $y$-letter. 
Similarly one can show that if $z_1$ is a $y$-letter, then $z_k$ is an $x$-letter.
\end{proof}

If $w$ is an $xy$ word, then there are unique $p,k_1,\dots,k_p,l_1,\dots,l_p\geq 1$ such that the first $k_l$ letters of $w$ are $x$-letters, the next $l_1$ letters are $y$-letters, the next $k_2$ letters are $x$-letters and so on. We call the tuple $(k_1,l_1,\dots,k_p,l_p)$ the \textit{type} of $w$. We define the type of a $yx$ word similarly.


\begin{lemma}\label{lemprime3}
An $xy$ or $yx$ word $w$ is prime if and only if the type of $w$ is
\[(k_1-r_0,k_1-r_1,k_2-r_1,k_2-r_2,\dots, k_{p}-r_{p-1},k_p-r_p)\]
where $p,k_1,\dots,k_p\geq 1$, $r_0=r_p=0$ and $0< r_i<\min(k_i,k_{i+1})$ for $1\leq i\leq p-1$.
\end{lemma}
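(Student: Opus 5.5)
Let $w$ be an $xy$ word of type $(a_1,b_1,\dots,a_p,b_p)$; the $yx$ case is symmetric (swap the roles of $x$- and $y$-letters). I will work with the prefix degree function. For $1\le t\le p$, let $d_t$ be the degree of the prefix ending after the $t$-th $y$-block, so $d_t=\sum_{s\le t}(a_s-b_s)$, and set $d_0=0$. Since $w\in\z_0$ we have $d_p=0$. The degree along $w$ rises during $x$-blocks and falls during $y$-blocks. So the prefix degrees form a walk whose local minima occur exactly at the ends of the $y$-blocks, with values $d_1,\dots,d_{p-1}$, and whose local maxima occur at the ends of the $x$-blocks, with values $d_{t-1}+a_t$.

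The key observation is that $w$ is prime if and only if no proper nonempty prefix has degree $0$ and, additionally, all proper prefixes have degree $>0$. The argument for this mirrors the proof of Lemma \ref{lemprime2}. If some proper prefix has degree $0$, then $w$ splits into two nonempty words from $\z_0$. Conversely, if $w$ is not prime, then some proper prefix lies in $\z_0$, i.e. has degree $0$. Moreover, for an $xy$ word the walk starts by going up. If it ever became negative before the end, it would have to pass through $0$ at a proper prefix, since the degree changes by $\pm1$ per letter. Hence primeness is equivalent to all proper nonempty prefixes having strictly positive degree. Because the minima of the walk occur at the ends of the $y$-blocks, this is in turn equivalent to $d_t>0$ for $1\le t\le p-1$.

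It remains to translate this condition into the stated parametrisation. Given a type with $d_t>0$ for $1\le t\le p-1$, set $r_t:=d_t$ and $k_t:=a_t+r_{t-1}$; then $r_0=r_p=0$. This gives $a_t=k_t-r_{t-1}$, and $b_t=a_t+d_{t-1}-d_t=k_t-r_t$. The constraint $a_t\ge1$ becomes $r_{t-1}<k_t$, and $b_t\ge1$ becomes $r_t<k_t$. Together with $r_t>0$, this yields $0<r_t<\min(k_t,k_{t+1})$ for $1\le t\le p-1$, and $k_t\geq 1$ follows.

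Conversely, given parameters $p,k_1,\dots,k_p$ and $r_0,\dots,r_p$ satisfying the stated constraints, the block lengths $k_t-r_{t-1}$ and $k_t-r_t$ are all $\ge1$. The prefix degrees at the ends of the $y$-blocks are $d_t=r_t$ by telescoping, so $d_p=0$ and $d_t=r_t>0$ for $0<t<p$. Hence the word lies in $\z_0$ and is prime by the previous paragraph.

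The only mildly delicate point is the equivalence "prime $\iff$ all proper prefixes have positive degree", specifically ruling out a negative excursion. This is handled by the $\pm1$-step argument above. Everything else is bookkeeping.
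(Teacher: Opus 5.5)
Your proof is correct and follows essentially the same route as the paper. Your $r_t=d_t$ and $k_t=a_t+r_{t-1}$ are exactly the paper's $r_i$ and $\hat k_i$, and your $\pm 1$-step argument makes explicit the step the paper leaves implicit: that having no proper prefix of degree $0$ forces every partial sum at the end of a $y$-block to be positive.
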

\begin{proof}
First suppose that $w$ is prime. Let $(k_1,l_1,\dots,k_p,l_p)$ be the type of $w$. Since $w$ is prime, no proper prefix of $w$ has degree $0$. It follows that 
\[\sum_{j=1}^ik_j>\sum_{j=1}^il_j\]
for any $1\leq i\leq p-1$ (if $i=p$, then the two sums above are equal since $\deg(w)=0$). Set 
\[\hat k_i:=\sum_{j=1}^ik_j-\sum_{j=1}^{i-1}l_j\quad\text{ and }r_i:=\sum_{j=1}^ik_j-\sum_{j=1}^il_j\]
for $1\leq i\leq p$. Then \[(k_1,l_1,\dots,k_p,l_p)=(\hat k_1-r_0,\hat k_1-r_1,\hat k_2-r_1,\hat k_2-r_2,\dots, \hat k_{p}-r_{p-1},\hat k_p-r_p)\]
and moreover $\hat k_1,\dots,\hat k_p\geq 1$, $r_0=r_p=0$ and $0< r_i<\min(\hat k_i,\hat k_{i+1})$ for $1\leq i\leq p-1$.

Now suppose that the type of $w$ is
\[(k_1-r_0,k_1-r_1,k_2-r_1,k_2-r_2,\dots, k_{p}-r_{p-1},k_p-r_p)\]
where $p,k_1,\dots,k_p\geq 1$, $r_0=r_p=0$ and $0< r_i<\min(k_i,k_{i+1})$ for $1\leq i\leq p-1$. One checks easily that every proper prefix of $w$ has positive degree if $w$ is an $xy$ word, respectively negative degree if $w$ is a $yx$ word. Thus $w$ is prime.
\end{proof}

We denote by $\z_0^{xy}$ the submonoid of $\z_0$ generated by all prime $xy$ words, and by $K\z_0^{xy}$ the subalgebra of $K\z$ generated by all prime $xy$ words (note that $K\z_0^{xy}$ is the monoid algebra of $\z_0^{xy}$). The monoid $\z_0^{yx}$ and the algebra $K\z_0^{yx}$ are defined analogously.

Lemma \ref{lemprime4} below follows from Lemma \ref{lemprime3}.
\begin{lemma}\label{lemprime4}
$K\z_0^{xy}$ and $K\z_0^{yx}$ are closed under the reductions defined in \S $3.3$.
\end{lemma}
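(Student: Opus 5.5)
It suffices to treat $K\z_0^{xy}$; the case of $K\z_0^{yx}$ is symmetric, with the roles of $x$- and $y$-letters (and of $m$ and $n$) interchanged. A reduction replaces a subword $w_\sigma$ of a word $u$ by $f_\sigma$. So it is enough to show the following: if $u\in\z_0^{xy}$ contains $w_\sigma$ as a subword, say $u=aw_\sigma b$, then each word $a v b$ is again in $\z_0^{xy}$, where $v$ is either the empty word or a word $x_{ij}y_{ji'}$ (respectively $y_{ji}x_{ij'}$) occurring in $f_\sigma$. Linearity then gives closure of $K\z_0^{xy}$.

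The first step is a clean characterisation of $\z_0^{xy}$, which I would extract from Lemmas \ref{lemprime1}--\ref{lemprime3}. By Lemma \ref{lemprime1}, every word of $\z_0$ factors uniquely into prime words, and by Lemma \ref{lemprime2} each prime factor is $xy$ or $yx$. An $xy$ prime has all proper nonempty prefixes of positive degree, as in the proof of Lemma \ref{lemprime3}. Hence a word $u\in\z_0$ lies in $\z_0^{xy}$ if and only if every prefix of $u$ has degree $\geq 0$. Conversely, a $yx$ prime factor would create a prefix of negative degree.

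Next, check that the prefix-degree profile is unchanged by each substitution. The forbidden words $x_{in}y_{ni'}$ and $y_{jm}x_{mj'}$ have degree $0$, and every replacement word is either empty or has the same letter pattern ($xy$, respectively $yx$). Thus the prefix degrees of $avb$ agree with those of $u$ at all positions outside the replaced block. For an $xy$-shaped substitution, inside the block the degrees are $d$, $d+1$, $d$, where $d\geq 0$ is the degree of $a$. Replacing by the empty word simply deletes the intermediate value, so nonnegativity is preserved. The $yx$ case is the delicate one, and I expect it to be the main point. Inside $y_{jm}x_{mj'}$ the intermediate prefix degree is $d-1$, and this must already be $\geq 0$ because $u\in\z_0^{xy}$. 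So $d\geq 1$, the replacement $y_{ji}x_{ij'}$ has the same intermediate value $d-1\geq0$, and the empty replacement only removes a value. The total degree stays $0$ in every case.

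Hence every word produced by a reduction again has all prefix degrees $\geq 0$ and total degree $0$, so it lies in $\z_0^{xy}$. Note that the empty word and the scalar $\delta$ terms are allowed, since $\z_0^{xy}$ is a monoid. This proves the claim for $K\z_0^{xy}$, and by symmetry (prefix degrees $\leq 0$) for $K\z_0^{yx}$.
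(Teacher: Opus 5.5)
Your proof is correct, and it takes a more explicit route than the paper. The paper only asserts that the lemma follows from Lemma~\ref{lemprime3}, i.e.\ from the description of prime words by their types $(k_1-r_0,k_1-r_1,\dots,k_p-r_{p-1},k_p-r_p)$.

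You instead characterise the whole monoid $\z_0^{xy}$ as the degree-$0$ words all of whose prefixes have degree $\geq 0$. This uses Lemmas~\ref{lemprime1} and \ref{lemprime2} together with the prefix criterion underlying the proof of Lemma~\ref{lemprime3}. You then observe that a substitution $w_\sigma\mapsto v$ either leaves the sequence of prefix degrees unchanged or deletes two of its entries, so nonnegativity is inherited. The one delicate value, the intermediate degree $d-1$ inside $y_{jm}x_{mj'}$, you handle correctly by noting that it is already a prefix degree of $u$.

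What your formulation buys is independence from how the prime factorisation changes under a reduction. A $\delta$-term can delete a prime factor entirely: $x_{1n}y_{n1}$ contributes the empty word. It can also merge two prime factors: reduction (ii) applied to $x_{11}y_{1m}\,x_{m1}y_{11}$ has $\delta$-term $x_{11}y_{11}$. Tracking such changes through the type parametrisation would need a case analysis that the paper leaves to the reader. The type description is what the paper needs later, for completions and ordered words, which is presumably why it is cited here. For this particular lemma, though, your prefix-degree argument is the cleaner and fully checked version.
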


We denote by $L(m,n)_0^{xy}$ (or just by $L_0^{xy}$) the subalgebra of $L(m,n)_0$ generated by all prime $xy$ words, and by $L(m,n)_0^{yx}$ (or just by $L_0^{yx}$) the subalgebra of $L(m,n)_0$ generated by all prime $yx$ words. Moreover, we define the sets 
\[\B^{xy} = \B \cap \z_0^{xy}\quad\text{ and }\quad\B^{yx} = \B\cap \z_0^{yx}.\]

\begin{proposition}\label{propxybasis}
$\B^{xy}$ is a basis for $L_0^{xy}$ and $\B^{yx}$ is a basis for $L_0^{yx}$.
\end{proposition}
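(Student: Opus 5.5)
We treat $\B^{xy}$ and $L_0^{xy}$; the $yx$ case is symmetric, with the roles of $x$ and $y$ (and of $m$ and $n$) interchanged. The plan is to combine the Bergman basis of Theorem \ref{thmLmnbasis} with the closure property of Lemma \ref{lemprime4}. Linear independence is immediate: $\B^{xy}\subseteq\B\subseteq\z_{\irr}$, and by Theorem \ref{thmLmnbasis} the image of $\z_{\irr}$ in $L(m,n)$ is linearly independent. Distinct words of $\B^{xy}$ are distinct irreducible words, so their images are distinct and independent. It remains to show that the images of $\B^{xy}$ lie in $L_0^{xy}$ and span it.

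\emph{Containment.} A word $w\in\B^{xy}$ lies in $\z_0^{xy}$, so it is a product of prime $xy$ words. Its image in $L$ is therefore a product of images of prime $xy$ words, hence lies in $L_0^{xy}$.

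\emph{Spanning.} $L_0^{xy}$ is the image of $K\z_0^{xy}$ under the canonical surjection $\pi:K\z\to L(m,n)$, since it is generated by the images of the prime $xy$ words. Take $a\in K\z_0^{xy}$. Because the partial order in the proof of Theorem \ref{thmLmnbasis} satisfies the descending chain condition, there is a final sequence of reductions on $a$. By Lemma \ref{lemprime4}, each reduction keeps us inside $K\z_0^{xy}$, so $r_S(a)\in K\z_0^{xy}\cap K\z_{\irr}$. Each reduction preserves the image in $L$, so $\pi(a)=\pi(r_S(a))$.

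The remaining point, and the only step needing care, is the identity
\[
K\z_0^{xy}\cap K\z_{\irr}=\operatorname{span}_K(\B^{xy}).
\]
This holds because both $K\z_0^{xy}$ and $K\z_{\irr}$ are spanned by subsets of the word basis $\z$ of $K\z$, namely $\z_0^{xy}$ and $\z_{\irr}$. The intersection of two spans of subsets of a basis is the span of the intersection of those subsets, here $\z_0^{xy}\cap\z_{\irr}=\B^{xy}$; the last equality uses $\z_0^{xy}\subseteq\z_0$. Hence $\pi(a)$ lies in the span of the images of $\B^{xy}$, which proves spanning.

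\emph{Main obstacle.} The substance lies in Lemma \ref{lemprime4}, which the paper states as a consequence of Lemma \ref{lemprime3}. If it needs checking: a forbidden subword $x_{in}y_{ni'}$ or $y_{jm}x_{mj'}$ occurring inside a word of $\z_0^{xy}$ is replaced by $\delta$ or by words $x_{ij}y_{ji'}$, respectively $y_{ji}x_{ij'}$. Such a replacement either removes an adjacent pair of degrees $+1,-1$ (or $-1,+1$), or swaps the letters but keeps the same $x/y$ pattern. In the first case the prefix degrees are unchanged except at the deleted position, so the word still factors into products of words whose proper prefixes have positive degree and which begin with $x$ and end with $y$. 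A deleted pair at a prime boundary cannot occur, since the boundary of two $xy$ primes is a $y$-letter followed by an $x$-letter with prefix degree $0$ there, while $y_{jm}x_{mj'}$ at such a boundary is dropped together with the adjacent letters. In every case the resulting words (or the empty word) still lie in $\z_0^{xy}$, because a word lies in $\z_0^{xy}$ exactly when all of its prefix degrees are $\geq 0$ and its total degree is $0$, and deleting an adjacent pair $y x$ or $x y$ preserves this property.
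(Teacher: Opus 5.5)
Your proof is correct and takes the same approach as the paper. Independence comes from the basis $\z_{\irr}$ of Theorem \ref{thmLmnbasis}, spanning comes from Lemma \ref{lemprime4}, and your identity $K\z_0^{xy}\cap K\z_{\irr}=\operatorname{span}_K(\B^{xy})$ spells out a step the paper leaves implicit; one slip in your optional check of Lemma \ref{lemprime4} is that a deleted pair at a prime boundary can occur (the $\delta$-term when reducing $y_{1m}x_{m1}$ in $x_{11}y_{1m}x_{m1}y_{11}$), but this is harmless because your closing prefix-degree criterion proves the lemma in every case.
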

\begin{proof}
We only prove that $\B^{xy}$ is a basis for $L_0^{xy}$. We have to show that $\B^{xy}$ spans $L_0^{xy}$. But Lemma \ref{lemprime4} implies that any product of prime $xy$ words is a linear combination of words from $\B^{xy}$. Thus $\B^{xy}$ spans $L_0^{xy}$ and therefore is a basis for $L_0^{xy}$.
\end{proof}

\begin{theorem}\label{thmfreeprod}
The algebra $L_0$ is isomorphic to the free product $L_0^{xy}\ast_K L_0^{yx}$.
\end{theorem}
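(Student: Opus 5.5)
The map comes from the universal property of the free product. The inclusions $L_0^{xy}\hookrightarrow L_0$ and $L_0^{yx}\hookrightarrow L_0$ induce a $K$-algebra homomorphism
\[\Phi: L_0^{xy}\ast_K L_0^{yx}\to L_0.\]
I will show $\Phi$ is surjective and injective by comparing bases.

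\emph{Surjectivity.} By Theorem \ref{thmLmnbasis}, $L_0$ is spanned by the images of words in $\z_0$. By Lemma \ref{lemprime1}, every nonempty word in $\z_0$ is a product of prime words. By Lemma \ref{lemprime2}, each prime word is an $xy$ word or a $yx$ word, so it lies in $L_0^{xy}$ or in $L_0^{yx}$. Hence the image of $\Phi$ contains a spanning set of $L_0$.

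\emph{Injectivity.} I use the standard basis of a free product of augmented algebras. By Proposition \ref{propxybasis}, $\B^{xy}$ is a basis of $L_0^{xy}$ and contains the empty word $\emptyset$, which is the unit. So $L_0^{xy}=K\oplus \operatorname{span}(\B^{xy}\setminus\{\emptyset\})$, and likewise for $L_0^{yx}$. A basis of $L_0^{xy}\ast_K L_0^{yx}$ then consists of $1$ together with all alternating products $b_1b_2\cdots b_t$. Here $t\ge 1$, each $b_s$ is a nonempty word in $\B^{xy}$ or in $\B^{yx}$, and consecutive factors come from different sets.

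Under $\Phi$, such a product goes to the concatenated word $b_1\cdots b_t$, provided this word is still irreducible. It suffices to check the junctions. Let $b_s$ be an $xy$-type factor followed by a $yx$-type factor $b_{s+1}$. Then $b_s$ ends in a $y$-letter, by Lemma \ref{lemprime2} applied to its last prime factor, and $b_{s+1}$ begins with a $y$-letter. The two letters at the junction are $y\,y$, which is not a forbidden word. In the other case the junction is $x\,x$, also not forbidden. The forbidden words have length two and no factor contains one, so the concatenation lies in $\B$.

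So $\Phi$ maps the basis above into $\B$, and the induced map of sets is a bijection onto $\B$. The inverse takes $w\in\B$ and decomposes it into primes by Lemma \ref{lemprime1}. It then groups maximal consecutive runs of prime $xy$ words and of prime $yx$ words. Each run is a subword of an irreducible word, hence irreducible, so it lies in $\B^{xy}$ or $\B^{yx}$. This grouping is unique because the prime decomposition is unique.

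Since $\B$ is a basis of $L_0$ by Theorem \ref{thmLmnbasis}, $\Phi$ carries a basis bijectively onto a basis, and is therefore an isomorphism.

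The step needing the most care is the free-product basis. It requires the basis of each factor to contain $1$, so that the factor splits as $K$ plus a complement; this holds here because the empty word lies in both $\B^{xy}$ and $\B^{yx}$. The rest is the bookkeeping in the grouping argument.
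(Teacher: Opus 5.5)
Your proposal is correct and takes essentially the paper's route. Surjectivity comes from the prime decomposition (Lemmas \ref{lemprime1} and \ref{lemprime2}), and injectivity from the fact that alternating products of words from $\B^{xy}$ and $\B^{yx}$ concatenate to words in $\B$, because every junction is $xx$ or $yy$. You are slightly more careful than the paper: you exclude the empty word from the alternating factors, and you check via unique prime decomposition that distinct alternating products give distinct elements of $\B$. The paper's version needs only that alternating products span the free product, so it does not rely on the normal-form basis theorem you cite.
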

\begin{proof}
$L_0$, $L_0^{xy}$ and $L_0^{yx}$ have the presentations
\begin{align*}
L_0&=\big\langle \B\mid ww'=r_S(ww')~(w,w'\in\B)\big\rangle,\\[0.2cm]
L_0^{xy}&=\big\langle \B^{xy}\mid ww'=r_S(ww')~(w,w'\in\B^{xy})\big\rangle,\\[0.2cm]
L_0^{yx}&=\big\langle \B^{yx}\mid ww'=r_S(ww')~(w,w'\in\B^{yx})\big\rangle.
\end{align*}
Hence $L_0^{xy}\ast L_0^{yx}$ has the presentation
\begin{align*}
L_0^{xy}\ast L_0^{yx}=\big\langle \B^{xy}\sqcup \B^{yx}\mid ww'=r_S(ww')~(w,w'\in\B^{xy}),~ww'=r_S(ww')~(w,w'\in\B^{yx})\big\rangle.
\end{align*}
Let $\phi:L_0^{xy}\ast L_0^{yx}\to L_0$ be the obvious algebra homomorphism. It follows from Lemmas \ref{lemprime1} and \ref{lemprime2} that $\phi$ is surjective. It remains to show that $\phi$ is injective.

Let $A$ denote the set of all finite alternating words over $\B^{xy}\sqcup \B^{yx}$ (including the empty word), i.e $A$ is the set of all words $w_1\dots w_k$ where either $w_i\in\B^{xy}$ for odd $i$ and $w_i\in\B^{yx}$ for even $i$, or $w_i\in\B^{xy}$ for even $i$ and $w_i\in\B^{yx}$ for odd $i$. If $a\in L_0^{xy}\ast L_0^{yx}$, then clearly $a$ can be written as
\[a=\sum_{w_1\dots w_k\in A}c_{w_1\dots w_k}w_1\dots w_k\]
where almost all coefficients $c_{w_1\dots w_k}$ are zero. Suppose that $\phi(a)=0$. Then $\sum_{w_1\dots w_k\in A}c_{w_1\dots w_k}w_1\dots w_k=0$ in $L_0$. But clearly $w_1\dots w_k\in \B$ for any $w_1\dots w_k\in A$ (because $w_1,\dots,w_k\in \B$ and at the ``transition'' from $w_i$ to $w_{i+1}$ one has either two $x$-letters or two $y$-letters, which cannot be reduced). It follows that all coefficients $c_{w_1\dots w_k}$ are zero. Thus $a=0$.
\end{proof}

Because of Lemma \ref{lemxymn=yxnm} below, which is easy to check, we will restrict our attention to the algebra $L^{xy}_0$ in the next two subsections.

\begin{lemma}\label{lemxymn=yxnm}
The algebras $L(m,n)_0^{xy}$ and $L(n,m)_0^{yx}$ are isomorphic.
\end{lemma}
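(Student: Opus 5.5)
We construct the isomorphism $L(m,n)_0^{xy}\cong L(n,m)_0^{yx}$ from an explicit isomorphism of Leavitt algebras that swaps the roles of the $x$- and $y$-letters. Denote the generators of $L(n,m)$ by $x'_{ji}$ and $y'_{ij}$, with $1\leq j\leq n$ and $1\leq i\leq m$. Its defining relations are
\[\sum_{i=1}^m x'_{ji}y'_{ij'}=\delta_{jj'}\quad\text{ and }\quad\sum_{j=1}^n y'_{ij}x'_{ji'}=\delta_{ii'}.\]
We then define an algebra homomorphism $\psi:L(m,n)\to L(n,m)$ on generators by
\[\psi(x_{ij})=y'_{ij}\quad\text{ and }\quad \psi(y_{ji})=x'_{ji}.\]
The relation $\sum_j x_{ij}y_{ji'}=\delta_{ii'}$ is sent to $\sum_j y'_{ij}x'_{ji'}=\delta_{ii'}$, which holds in $L(n,m)$. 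Likewise $\sum_i y_{ji}x_{ij'}=\delta_{jj'}$ is sent to $\sum_i x'_{ji}y'_{ij'}=\delta_{jj'}$, which also holds. So $\psi$ is well defined. The symmetric assignment gives a homomorphism $L(n,m)\to L(m,n)$ that is inverse to $\psi$ on generators, hence $\psi$ is an isomorphism. In matrix language, $(X,Y)$ is sent to $(Y',X')$.

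Next we check how $\psi$ interacts with the canonical gradings. The map $\psi$ sends letters of degree $1$ to letters of degree $-1$ and vice versa, so $\psi(L(m,n)_k)=L(n,m)_{-k}$. In particular $\psi$ restricts to an isomorphism $L(m,n)_0\to L(n,m)_0$.

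It remains to compare the two subalgebras. On the level of words, $\psi$ maps each word over $\mathcal{X}$ letterwise to a word with $x$- and $y$-letters exchanged. This preserves membership in $\z_0$ and preserves primeness, because primeness only depends on the degrees of prefixes, and these change sign. It also turns $xy$ words into $yx$ words. Hence $\psi$ maps the set of prime $xy$ words of $L(m,n)$ bijectively onto the set of prime $yx$ words of $L(n,m)$.

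Consequently $\psi$ maps the subalgebra generated by the prime $xy$ words, namely $L(m,n)_0^{xy}$, onto the subalgebra generated by the prime $yx$ words, namely $L(n,m)_0^{yx}$. The inverse of $\psi$ does the reverse, so the restriction is an isomorphism $L(m,n)_0^{xy}\cong L(n,m)_0^{yx}$.

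There is no real obstacle here. The only point requiring care is the index bookkeeping: one must confirm that $\psi$ sends the two families of defining relations to each other, and that the roles of $m$ and $n$ are swapped consistently.
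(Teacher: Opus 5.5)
Your proof is correct. The paper gives no proof here and only calls the lemma easy to check; your argument is the natural way to verify it. The letter-swapping isomorphism $L(m,n)\to L(n,m)$ reverses the canonical grading and maps prime $xy$ words bijectively onto prime $yx$ words, so it restricts to the required isomorphism $L(m,n)_0^{xy}\cong L(n,m)_0^{yx}$.
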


\subsection{The basis $\mathcal{C}^{xy}$}
We call an $xy$ word \textit{ordered} if it is of type $(k_1,k_1,\dots,k_p,k_p)$ for some $p,k_1,\dots,k_p\geq 1$, and \textit{unordered} otherwise. In this subsection, we find a basis $\mathcal{C}^{xy}$ for $L^{xy}_0$ consisting only of ordered words.

\begin{definition}
Let $w$ be an $xy$ word of type 
\[(k_1-r_0,k_1-r_1,k_2-r_1,k_2-r_2,\dots, k_{p}-r_{p-1},k_p-r_p)\]
where $p,k_1,\dots,k_p\geq 1$, $r_0=r_p=0$ and $0\leq r_i<\min(k_i,k_{i+1})$ for $1\leq i\leq p-1$. Hence (neglecting the indices),
\[w=x^{k_1}y^{k_1-r_1}x^{k_2-r_1}y^{k_2-r_2}\dots x^{k_{p-1}-r_{p-2}}y^{k_{p-1}-r_{p-1}}x^{k_p-r_{p-1}}y^{k_p}.\]
The \textit{completion} of $w$ is the ordered $xy$ word 
\[\hat w=x^{k_1}y^{k_1-r_1}\underbrace{y_{1m}^{r_1}x_{m1}^{r_1}}x^{k_2-r_1}y^{k_2-r_2}\underbrace{y_{1m}^{r_2}x_{m1}^{r_2}}\dots \underbrace{y_{1m}^{r_{p-2}}x_{m1}^{r_{p-2}}}x^{k_{p-1}-r_{p-2}}y^{k_{p-1}-r_{p-1}}\underbrace{y_{1m}^{r_{p-1}}x_{m1}^{r_{p-1}}}x^{k_p-r_{p-1}}y^{k_p}\]
obtained from $w$ by inserting the words $y_{1m}^{r_i}x_{m1}^{r_i}~(1\leq i\leq p-1)$ at the places indicated above. If $w$ is already ordered, than $\hat w=w$.
\end{definition}

Let $k_1,\dots,k_p\geq 1$. We denote by $\B^{xy}(k_1,\dots,k_p)$ the subset of $\B^{xy}$  consisting of all words of type 
\[(k_1-r_0,k_1-r_1,k_2-r_1,k_2-r_2,\dots, k_{p}-r_{p-1},k_p-r_p)\]
where $r_0=r_p=0$ and $0\leq r_i<\min(k_i,k_{i+1})$ for $1\leq i\leq p-1$. Clearly $\B^{xy}(k_1,\dots,k_p)$ is a finite set for any $k_1,\dots,k_p\geq 1$. Moreover,
\[\B^{xy}=\{\emptyset\}\sqcup\bigsqcup_{(k_1,\dots,k_p)}\B^{xy}(k_1,\dots,k_p).\]

We choose an arbitrary total order $\prec$ on the set of all tuples $(k_1,\dots,k_p)$ where $p,k_1,\dots,k_p\geq 1$. Moreover, we choose a total order $<$ of the elements of $\B^{xy}$ such that
\begin{itemize}
\medskip
\item $\emptyset<w$ for all $w\neq\emptyset$,
\medskip
\item if $(k_1,\dots,k_p)\prec (k'_1,\dots,k'_{p'})$, then $w<w'$ for any $w\in \B^{xy}(k_1,\dots,k_p)$ and $w'\in \B^{xy}(k'_1,\dots,k'_{p'})$,
\medskip
\item if $w,w'\in \B^{xy}(k_1,\dots,k_p)$ and $|w|<|w'|$, then $w<w'$.
\end{itemize}

\begin{lemma}\label{lemcompletion}
Let $k_1,\dots,k_p\geq 1$, $w\in\B^{xy}(k_1,\dots,k_p)$ and $\hat w$ the completion of $w$. Then there are $c_1,\cdots,c_k\in K$ and $w_1,\dots,w_k\in\B^{xy}(k_1,\dots,k_p)$ such that $w<w_i$ for any $1\leq i\leq k$ and
\[\hat w=w+c_1w_1+\dots+c_kw_k\]
in $L_0^{xy}$.
\end{lemma}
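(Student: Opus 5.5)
The plan is to reduce the completion $\hat w$ to normal form one inserted block at a time, and to show that every word appearing besides $w$ itself is longer than $w$. The tool is the relation $y_{1m}x_{m1}=1-\sum_{i=1}^{m-1}y_{1i}x_{i1}$, which holds in $L$ by the defining relations (reduction (ii) of $S$ with $j=j'=1$). Each inserted block $y_{1m}^{r}x_{m1}^{r}$ has its innermost pair $y_{1m}x_{m1}$ forbidden. Replacing it gives $y_{1m}^{r-1}x_{m1}^{r-1}$ minus a sum of words $y_{1m}^{r-1}y_{1i}x_{i1}x_{m1}^{r-1}$ with $i<m$. Iterating on the first term, we get by induction on $r$
\[
y_{1m}^{r}x_{m1}^{r}=1-\sum_{s=1}^{r}\sum_{i=1}^{m-1}y_{1m}^{s-1}y_{1i}x_{i1}x_{m1}^{s-1}.
\]
Every word on the right other than $1$ is irreducible: its only $y$--$x$ junction is $y_{1i}x_{i1}$ with $i<m$. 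It also has even length $2s$ with $1\le s\le r$.

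Substituting this identity for each block $i=1,\dots,p-1$ in $\hat w$ and multiplying out expresses $\hat w$ as $w$ (all blocks replaced by $1$) plus $\pm$ the words obtained by inserting $y_{1m}^{s_i-1}y_{1c_i}x_{c_i1}x_{m1}^{s_i-1}$, with $0\le s_i\le r_i$ and $c_i<m$, into the chosen blocks and the empty word into the others, with at least one $s_i\ge1$. The next step is to check that each such word $u$ is irreducible and lies in $\B^{xy}(k_1,\dots,k_p)$.

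\emph{Irreducibility.} The new junctions inside an inserted block are $y$--$y$, $x$--$x$, or $y_{1c}x_{c1}$ with $c<m$. At the ends of the block, a $y$-letter of $w$ is followed by $y_{1\ast}$ and $x_{\ast 1}$ is followed by an $x$-letter of $w$. None of these is a forbidden $x_{\ast n}y_{n\ast}$ or $y_{\ast m}x_{m\ast}$. Every other junction is one of $w$, and $w$ is irreducible. Hence $u\in\B$.

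\emph{Type.} The inserted word at position $i$ lengthens the $i$-th $y$-run and the following $x$-run by $s_i$ each. So $u$ has type with parameters $(k_1,\dots,k_p)$ and $r_i'=r_i-s_i$, where $0\le r_i'<\min(k_i,k_{i+1})$. Because the $k$-tuple is unchanged, $u$ lies in $\B^{xy}(k_1,\dots,k_p)$.

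\emph{Membership in $\z_0^{xy}$.} The word $u$ is an $xy$ word of degree $0$. To place it in $\B^{xy}$, note that by Lemma \ref{lemprime3}, if $r_i'=0$ then $u$ splits at that point into a product of prime $xy$ words. In either case $u$ lies in $\z_0^{xy}$.

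Finally, $|u|=|w|+2\sum_i s_i>|w|$. By the third defining property of the order $<$ on $\B^{xy}$ (same tuple, longer length), we get $w<u$. Collecting terms gives the claimed expression $\hat w=w+\sum_j c_jw_j$ in $L_0^{xy}$, which completes the proof.

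The main obstacle is the bookkeeping. One must check that after the substitution no junction becomes reducible, in particular at the boundaries between $w$'s letters and the inserted letters. One must also confirm that the indices used in the completion, $y_{1m}$ and $x_{m1}$, were chosen so that the pair to be reduced is exactly $y_{1m}x_{m1}$, and so that the leftover pieces $y_{1i}x_{i1}$ ($i<m$) never create a forbidden subword.
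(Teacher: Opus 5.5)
Your proposal is correct and follows essentially the same route as the paper. Your identity $y_{1m}^{r}x_{m1}^{r}=1-\sum_{s=1}^{r}\sum_{i=1}^{m-1}y_{1m}^{s-1}y_{1i}x_{i1}x_{m1}^{s-1}$ is exactly the paper's $f(r)=1$, substituted at each inserted block; the only difference is that you explicitly verify what the paper leaves implicit (irreducibility at the new junctions, the type with $r_i'=r_i-s_i$, membership in the $xy$ monoid via nonnegative prefix degrees, and $|u|>|w|$).
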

\begin{proof}
Clearly
\[w=x^{k_1}y^{k_1-r_1}x^{k_2-r_1}y^{k_2-r_2}\dots x^{k_{p-1}-r_{p-2}}y^{k_{p-1}-r_{p-1}}x^{k_p-r_{p-1}}y^{k_p}\]
(neglecting the indices), where $0\leq r_i<\min(k_i,k_{i+1})$ for each $1\leq i\leq p-1$. For any $t\geq 0$ set
\[f(t):=\sum_{j=1}^{m-1}y_{1j}x_{j1}+\sum_{j=1}^{m-1}y_{1m}y_{1j}x_{j1}x_{m1}+\sum_{j=1}^{m-1}y_{1m}^2y_{1j}x_{j1}x_{m1}^2+\dots+\sum_{j=1}^{m-1}y_{1m}^{t-1}y_{1j}x_{j1}x_{m1}^{t-1}+y_{1m}^tx_{m1}^t\]
with the convention $f(0)=\emptyset$. Clearly $f(t)=1$ in $L_0$ (can be shown by induction on $t$). Hence
\[w=x^{k_1}y^{k_1-r_1}\underbrace{f(r_1)}x^{k_2-r_1}y^{k_2-r_2}\underbrace{f(r_2)}\dots \underbrace{f(r_{p-2})}x^{k_{p-1}-r_{p-2}}y^{k_{p-1}-r_{p-1}}\underbrace{f(r_{p-1})}x^{k_p-r_{p-1}}y^{k_p}\]
in $L_0$.
It follows that $w=\hat w+c_1w_1+\dots+c_kw_k$ where $c_1,\cdots,c_k\in K$ and $w_1,\dots,w_k\in\B^{xy}(k_1,\dots,k_p)$ such that $w<w_i$ for any $1\leq i\leq k$.
\end{proof}

We set $\C^{xy}:=\{\hat w\mid w\in \B^{xy}\}$ with the convention $\hat\emptyset=\emptyset$.

\begin{proposition}\label{propxybasisC}
$\C^{xy}$ is a basis for $L_0^{xy}$.
\end{proposition}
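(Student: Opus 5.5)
Our plan is a unitriangularity argument with respect to the total order $<$ on $\B^{xy}$, carried out inside each finite block $\B^{xy}(k_1,\dots,k_p)$. By Proposition \ref{propxybasis}, $\B^{xy}$ is a basis for $L_0^{xy}$, and
\[\B^{xy}=\{\emptyset\}\sqcup\bigsqcup_{(k_1,\dots,k_p)}\B^{xy}(k_1,\dots,k_p).\]
Lemma \ref{lemcompletion} shows that for $w\in\B^{xy}(k_1,\dots,k_p)$ the element $\hat w$ equals $w$ plus a linear combination of strictly larger elements of the \emph{same} block. So we first establish the statement blockwise: for each tuple $(k_1,\dots,k_p)$, the set $\{\hat w\mid w\in \B^{xy}(k_1,\dots,k_p)\}$ is a basis of the finite-dimensional space $V(k_1,\dots,k_p)$ spanned by $\B^{xy}(k_1,\dots,k_p)$. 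Since $L_0^{xy}=K\emptyset\oplus\bigoplus V(k_1,\dots,k_p)$ and $\hat\emptyset=\emptyset$, this gives the proposition, together with injectivity of $w\mapsto\hat w$ (distinct blocks are disjoint, and within a block injectivity follows from linear independence).

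For a fixed block, list $\B^{xy}(k_1,\dots,k_p)$ increasingly as $w_1<\dots<w_N$; this is possible since the block is finite and $<$ is total. Lemma \ref{lemcompletion} gives
\[\hat w_s=w_s+\sum_{t>s}c_{st}w_t \qquad (1\le s\le N).\]
The transition matrix from $(w_s)$ to $(\hat w_s)$ is therefore unitriangular, hence invertible. It follows that $\hat w_1,\dots,\hat w_N$ are linearly independent and span $V(k_1,\dots,k_p)$. Equivalently, one can argue by downward induction on $s$: $w_N=\hat w_N$, and each $w_s$ equals $\hat w_s$ minus a combination of $w_t$ with $t>s$, which already lie in the span of the $\hat w_t$.

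The main point needing care is that the combination in Lemma \ref{lemcompletion} really stays inside the block and is strictly larger in $<$. Both are guaranteed by the statement of the lemma as given, so we rely on it. If one is sceptical of the proof of that lemma, the thing to double-check is that $r_S$ applied to the expanded product stays in the type family of $(k_1,\dots,k_p)$ with longer words. Longer words are larger by the third ordering condition on $<$, and this is what yields the strict increase.

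A minor additional check is that $\hat w\in\B^{xy}$, i.e. that it is irreducible. The inserted segments $y_{1m}^{r}x_{m1}^{r}$ meet the neighbouring letters as $y\,y_{1m}$ and $x_{m1}\,x$, so no forbidden word $x_{in}y_{ni'}$ is created. Inside a segment, the only junction is $y_{1m}x_{m1}$, which is the forbidden word $y_{jm}x_{mj'}$ with $j=j'=1$. So $\hat w$ need not lie in $\B$ as a word. However, the proposition only concerns elements of $L_0^{xy}$, and the unitriangular argument above uses only the equality in $L_0^{xy}$ from Lemma \ref{lemcompletion}, so irreducibility of $\hat w$ is not needed.
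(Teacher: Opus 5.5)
Your argument is correct and is essentially the paper's proof. The paper also uses Lemma \ref{lemcompletion} to show that the matrix of $w\mapsto\hat w$ with respect to $\B^{xy}$ is a direct sum of finite unitriangular blocks, hence invertible, so $\C^{xy}$ is a basis. You phrase the same argument blockwise on the finite-dimensional subspaces $V(k_1,\dots,k_p)$ rather than via column-finite matrices, and your side remark that $\hat w$ contains the forbidden word $y_{1m}x_{m1}$ (so $\C^{xy}\not\subseteq\B$) is correct and harmless.
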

\begin{proof}
Recall from Proposition \ref{propxybasis} that $\B^{xy}$ is a basis for $L_0^{xy}$. Let $\phi:L_0^{xy}\to L_0^{xy}$ be the linear transformation defined by $\phi(w)=\hat w$ for any $w\in \B^{xy}$. It follows from Lemma \ref{lemcompletion} that the matrix representation $A$ of $\phi$ with respect to $\B^{xy}$ is a direct sum of finite quadratic lower triangular matrices with ones on the diagonal (cf. \S 2.3). Hence $A$ is invertible and $A^{-1}$ is a column-finite matrix. It follows that $\phi$ is an automorphism of the vector space $L_0^{xy}$ (the inverse of $\phi$ is the linear transformation $[\cdot]_{\B^{xy}}^{-1}\circ \phi_{A^{-1}}\circ[\cdot]_{\B^{xy}}$). Thus $\phi(\B^{xy})=\C^{xy}$ is a basis for $L_0^{xy}$.
\end{proof}

\subsection{The basis $\D^{xy}$}
In this subsection, we find a basis $\mathcal{D}^{xy}$ for $L^{xy}_0$ consisting only of admissible words (see Definition \ref{defadm}).

Let $w$ be an ordered prime $xy$ word. Then 
\[w=x_{i_1, j_1}\dots x_{i_p, j_p}y_{k_p,l_p}\dots y_{k_1,l_1}\]
for some $i_1,\dots i_p, l_1,\dots,l_p\in \{1,\dots,m\}$ and $j_1,\dots j_p, k_1,\dots,k_p\in\{1,\dots,n\}$. We call the tuple
\[\sign(w)=(j_1,\dots,j_p,k_p,\dots,k_1)\]
the \textit{signature} of $w$. Instead of $(j_1,\dots,j_p,k_p,\dots,k_1)$ we may write
$(j_1,\dots,j_p\mid k_p,\dots,k_1)$.

Let now $w$ be an arbitrary ordered $xy$ word. Write $w=w_1\dots w_q$ where each $w_i$ is prime.  We call the tuple
\[\sign(w)=(\sign(w_1),\dots,\sign(w_q))\]
the \textit{signature} of $w$. 

\begin{definition}\label{defadm}
We call an ordered $xy$ word $w$ \textit{admissible} if $\sign(w)$ is of the form 
\[\sign(w)=((1,\dots,1,j_1\mid k_1,1,\dots 1),\dots, (1,\dots,1,j_q\mid k_q,1,\dots 1)).\]
\end{definition}

\begin{definition}
Let $w$ be an ordered prime $xy$ word. Then 
\[w=x_{i_1, j_1}x_{i_2 ,j_2}\dots x_{i_{p-1},j_{p-1}}x_{i_p, j_p}y_{k_p,l_p}y_{k_{p-1},l_{p-1}}\dots y_{k_2,l_{2}}y_{k_1,l_1}\]
where $i_1,\dots i_p, l_1,\dots,l_p\in \{1,\dots,m\}$ and $j_1,\dots j_p, k_1,\dots,k_p\in\{1,\dots,n\}$. 
The \textit{transformation} of $w$ is the admissible $xy$ word 
\[\widetilde w=x_{i_1, j_1}\underbrace{y_{1m}x_{m1}}_{\text{if }j_1\neq 1}x_{i_2 ,j_2}\dots x_{i_{p-1},j_{p-1}}\underbrace{y_{1m}^{p-1}x_{m1}^{p-1}}_{\text{if }j_{p-1}\neq 1}x_{i_p, j_p}y_{k_p,l_p}\underbrace{y_{1m}^{p-1}x_{m1}^{p-1}}_{\text{if }k_{p-1}\neq 1}y_{k_{p-1},l_{p-1}}\dots y_{k_2,l_{2}}\underbrace{y_{1m}x_{m1}}_{\text{if }k_{1}\neq 1}y_{k_1,l_1}\]
obtained from $w$ by inserting each of the words $y_{1m}^{t}x_{m1}^{t}~(1\leq t\leq p-1)$ two times at the places indicated above (provided the indicated condition is satisfied). Now let $w$ be an arbitrary ordered $xy$ word. Then $w=w_1\dots w_q$ where $w_1,\dots,w_q$ are prime. The \textit{transformation} of $w$ is the admissible word $\widetilde w=\widetilde w_1\dots\widetilde w_q$. Note that if $w$ is already admissible, then $\widetilde w=w$. 
\end{definition}

\begin{definition}
Let $w$ be an ordered $xy$ word. We call an ordered $xy$ subword $u$ of $w$ a \textit{chain} in $w$ if the type of $u$ equals $(k_1,k_1,k_2,k_2,\dots,k_p,k_p)$ for some $k_1<k_2<\dots<k_p$, and
\[\sign(u)=((1,\dots,1,j_1\mid 1,1\dots, 1,),\dots, (1,\dots,1,j_p\mid 1,1\dots 1))\]
where $j_1,\dots,j_p>1$. We call an ordered $xy$ subword $v$ of $w$ a \textit{cochain} in $w$ if the type of $v$ equals $(k_p,k_p,\dots,k_2,k_2,k_1,k_1)$ for some $k_1<k_2<\dots<k_p$, and
\[\sign(v)=((1,\dots,1,1\mid i_p,1,\dots, 1),\dots, (1,\dots,1,1\mid i_1,1,\dots 1))\]
where $i_1,\dots,i_p>1$.
\end{definition}

\begin{definition}
Let $w$ be an ordered $xy$ word and $u$ a chain of type $(k_1,k_1,k_2,k_2,\dots,$ $k_p,k_p)$ in $w$. Write $w=w_1\dots w_tuw_{t+1}\dots w_q$ where each $w_i$ is prime. We call $u$ \textit{compatible} if $t<q$, the word $w_{t+1}$ has type $(k,k)$ for some $k>k_p$, and 
\[\sign(w_{t+1})=(\underbrace{1,\dots,1}_{k_p},*,\dots,*\mid *,\dots,*).\]
We call $u$ a \textit{maximal compatible chain} if it is compatible and neither $w_tu$ nor $uw_{t+1}$ is a compatible chain.
\end{definition}

\begin{definition}
Let $w$ be an ordered $xy$ word and $v$ a cochain of type $(k_1,k_1,k_2,k_2,\dots,$ $k_p,k_p)$ in $w$. Write $w=w_1\dots w_tvw_{t+1}\dots w_q$ where each $w_i$ is prime. We call $v$ \textit{compatible} if $t>0$, the word $w_{t}$ has type $(k,k)$ for some $k>k_1$, and 
\[\sign(w_{t})=(*,\dots,*\mid *,\dots,*,\underbrace{1,\dots,1}_{k_1}).\]
We call $v$ a \textit{maximal compatible cochain} if it is compatible and neither $w_tv$ nor $vw_{t+1}$ is a compatible cochain.
\end{definition}

\begin{definition}
Let $w$ be an ordered $xy$ word. Then $w$ can be written uniquely as
\[w=w_1u_1w_2u_2\dots w_q u_q w_{q+1}\]
where each $w_i$ is either an ordered word from $\z_0^{xy}$ or the empty word, and the $u_i$'s are the maximal compatible chains and cochains in $w$ (note that the maximal compatible chains and cochains cannot overlap). The word $w_1w_2\dots w_q$ is called the \textit{skeleton} of $w$. 
\end{definition}

The following lemma is easy to check.
\begin{lemma}\label{lemskeleton}
Let $w'$ be an ordered $xy$ word with skeleton $w$. Write $w=w_1\dots w_q$ where each $w_i$ is prime. Then 
\[w'=u_1w_1v_1u_2w_2v_2\dots u_q w_q v_{q}\]
where each $u_i$ is either a maximal compatible chain or the empty word, and each $v_i$ is either a maximal compatible cochain or the empty word.
\end{lemma}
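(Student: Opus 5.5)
The plan is to argue directly from the definition of the skeleton, that is, from the unique decomposition $w'=w'_1u'_1w'_2u'_2\dots w'_ru'_rw'_{r+1}$, where the $u'_i$ are the maximal compatible chains and cochains of $w'$ and the $w'_i$ are ordered words from $\z_0^{xy}$ or empty. First I would record the combinatorial constraints that compatibility imposes. A compatible chain $u$ must be followed immediately by a prime factor $w_{t+1}$ of type $(k,k)$ with $k>k_p$. So after a maximal compatible chain there is a prime factor that does not belong to that chain. Is that factor the start of a cochain, or part of the skeleton? By maximality, $uw_{t+1}$ is not a compatible chain. Either $w_{t+1}$ fails the chain signature condition, or it is itself the first factor of a longer chain that is not compatible. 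I would argue that in every case the prime factor immediately to the right of a maximal compatible chain lies in a skeleton segment $w'_i$. This rules out a chain followed directly by a chain or by a cochain.

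The symmetric statement is that the prime factor immediately to the left of a maximal compatible cochain lies in the skeleton. Together these give the ordering. Every chain is immediately followed by a skeleton prime, and every cochain is immediately preceded by a skeleton prime. Also, no chain is adjacent to a cochain in the wrong order, since a chain needs a skeleton prime on its right and a cochain needs one on its left.

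Second, I would group the decomposition around the skeleton primes. Write the skeleton as $w=w_1\dots w_q$ with each $w_j$ prime. Around each $w_j$, the pieces of $w'$ strictly between $w_{j-1}$ and $w_j$ consist of chains and cochains only. Given the adjacency facts, such a gap can contain at most one cochain, namely the one attached to $w_{j-1}$ on its left, followed by at most one chain, attached to $w_j$ on its right. A second chain in the same gap would have to be followed by a skeleton prime, but its right neighbour would be another chain. Likewise, a second cochain in the same gap would have a cochain as its left neighbour rather than a skeleton prime.

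Before the first skeleton prime there can be only a chain, since a cochain requires $t>0$ and a skeleton prime to its left. Symmetrically, after $w_q$ there can be only a cochain, since a chain requires $t<q$. Assigning to each $w_j$ the chain immediately before it as $u_j$ and the cochain immediately after it as $v_j$, with empty words where there is none, yields $w'=u_1w_1v_1\dots u_qw_qv_q$.

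The main obstacle is the first step. One must show that the prime factor adjacent to a maximal compatible chain, or cochain, is never itself absorbed into another chain or cochain. This requires a small case analysis on signatures. A chain prime has signature $(1,\dots,1,j\mid 1,\dots,1)$ with $j>1$, while the compatibility witness has signature starting with $k_p$ ones. I would also check the degenerate case where $w'$ has empty skeleton. Compatibility forces at least one non-chain prime to exist whenever a chain exists, so an empty skeleton forces $w'$ to be empty as well.
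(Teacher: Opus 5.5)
Your bookkeeping in the second step is sound. Once you know two facts, the decomposition $w'=u_1w_1v_1\dots u_qw_qv_q$ follows: the prime factor immediately to the right of a maximal compatible chain is a skeleton prime, and so is the prime factor immediately to the left of a maximal compatible cochain. Given these, each gap between consecutive skeleton primes is empty, a single cochain, a single chain, or a cochain followed by a chain. The paper only says the lemma is easy to check, and this is the natural verification.

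The gap is that this adjacency claim, which is the only substantive part of the lemma, is asserted rather than proved, and the signature case analysis you suggest cannot prove it. Your case split uses only that $uw_{t+1}$ is not a compatible chain, which says nothing about whether $w_{t+1}$ begins a compatible cochain. Signatures cannot exclude this either: a cochain prime of type $(k,k)$ with $k>k_p$ has signature $(1,\dots,1\mid i,1,\dots,1)$, which begins with $k_p$ ones, so it is a legitimate witness for $u$. Even in the chain case, knowing that $uw_{t+1}$ is a non-compatible chain does not by itself prevent $w_{t+1}$ from starting a new maximal compatible chain.

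What closes the gap is a comparison of types, together with one observation: compatibility of a chain (resp.\ cochain) depends only on its last (resp.\ first) prime factor and that factor's right (resp.\ left) neighbour. Chain primes and cochain primes are disjoint classes, and maximal compatible blocks do not overlap. Hence any maximal compatible block containing the witness $w_{t+1}$ of $u$ must start at $w_{t+1}$. There are two cases.
\begin{enumerate}[(a)]
\item If the block is a compatible cochain $v$, its left neighbour is the last prime of $u$, of type $(k_p,k_p)$. Compatibility of $v$ would force $k_p>k$, contradicting $k>k_p$.
\item If the block is a compatible chain $u'$, then $uu'$ is a chain because $k_p<k$, and $uw_{t+1}$ is compatible. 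Either $u'=w_{t+1}$ and the witness of $u'$ serves, or the next prime of $u'$ serves: it has type $(k',k')$ with $k'>k$ and signature $(1,\dots,1,j'\mid 1,\dots,1)$, which has $k'-1\geq k$ leading ones. This contradicts maximality of $u$.
\end{enumerate}
The mirror arguments handle the left neighbour $w_t$ of a maximal compatible cochain $v$. It cannot end a compatible chain, since that chain's witness would be the first prime of $v$, which has smaller type than $w_t$. It cannot end a compatible cochain $v'$, since then $w_tv$ would be a compatible cochain.

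Note that (a) relies on reading the cochain compatibility condition as comparing the witness with the adjacent, i.e.\ largest, prime of the cochain; the paper's labelling $k_1,\dots,k_p$ is ambiguous there. With these additions your proof is complete.
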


Let $k_1,\dots,k_p\geq 1$. We denote by $\C^{xy}(k_1,\dots,k_p)$ the subset of $\C^{xy}$ consisting all words whose skeleton has type
\[(k_1,k_1,k_2,k_2,\dots, k_{p},k_p).\]
It follows from Lemma \ref{lemskeleton} that $\C^{xy}(k_1,\dots,k_p)$ is a finite set for any $k_1,\dots,k_p\geq 1$. Moreover,
\[\C^{xy}=\{\emptyset\}\sqcup\bigsqcup_{(k_1,\dots,k_p)}\C^{xy}(k_1,\dots,k_p).\]

We choose an arbitrary total order $\prec$ on the set of all tuples $(k_1,\dots,k_p)$ where $p,k_1,\dots,k_p\geq 1$. Moreover, we choose a total order $<$ of the elements of $\C^{xy}$ such that
\begin{itemize}
\medskip
\item $\emptyset<w$ for all $w\neq\emptyset$,
\medskip
\item if $(k_1,\dots,k_p)\prec (k'_1,\dots,k'_{p'})$, then $w<w'$ for any $w\in \C^{xy}(k_1,\dots,k_p)$ and $w'\in \C^{xy}(k'_1,\dots,k'_{p'})$,
\medskip
\item if $w,w'\in \C^{xy}(k_1,\dots,k_p)$ and $|w|<|w'|$, then $w<w'$.
\end{itemize}

\begin{lemma}\label{lemtransformation}
Let $k_1,\dots,k_p\geq 1$, $w\in\C^{xy}(k_1,\dots,k_p)$ and $\widetilde w$ the transformation of $w$. Then there are $c_1,\cdots,c_k\in K$ and $w_1,\dots,w_k\in\C^{xy}(k_1,\dots,k_p)$ such that $w<w_i$ for any $1\leq i\leq k$ and
\[\widetilde w=w+c_1w_1+\dots+c_kw_k\]
in $L_0^{xy}$ (resp. $L_0^{yx}$).
\end{lemma}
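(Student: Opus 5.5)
The plan follows the proof of Lemma \ref{lemcompletion}, with the identity $f(t)=1$ in $L_0$ as the basic tool, where
\[f(t)=\sum_{s=0}^{t-1}\sum_{j=1}^{m-1}y_{1m}^{s}y_{1j}x_{j1}x_{m1}^{s}+y_{1m}^tx_{m1}^t.\]
Since $\widetilde w$ is formed prime factor by prime factor, and the expansion below only produces words whose prime factors are again ordered $xy$ words of the relevant shapes, I will first treat the case where $w$ is a single ordered prime $xy$ word. The general case then follows by multiplying out, taking care that the total order on $\C^{xy}(k_1,\dots,k_p)$ is compatible with concatenation; this holds because it is defined via skeleton type and length.

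\textbf{Step 1.} Write $w=x_{i_1j_1}\dots x_{i_pj_p}y_{k_pl_p}\dots y_{k_1l_1}$. In $\widetilde w$, replace each inserted block $y_{1m}^tx_{m1}^t$ by $1-\bigl(f(t)-y_{1m}^tx_{m1}^t\bigr)$ and multiply out. The term in which every block is replaced by $1$ is $w$ itself. Every other term is, up to sign, a word obtained from $w$ by inserting, at some of the admissible places (after $x_{i_tj_t}$ with $j_t\neq1$, or before $y_{k_tl_t}$ with $k_t\neq 1$), either a full block $y_{1m}^{t}x_{m1}^{t}$ or a partial block $y_{1m}^{s}y_{1j}x_{j1}x_{m1}^{s}$ with $s<t$ and $j<m$. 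I will then check two facts about these words: they contain no forbidden subword, using $j<m$ and the conditions $j_t\neq 1$ and $k_t\neq1$ at the junctions; and they are $xy$ words in $\z_0^{xy}$. Hence each of them lies in $\B^{xy}$.

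\textbf{Step 2.} This is the main obstacle. The words produced in Step 1 are in general unordered, so I pass to $\C^{xy}$ by applying Lemma \ref{lemcompletion} (equivalently Proposition \ref{propxybasisC}) to each of them. Each completion inserts further blocks of the form $y_{1m}^{r}x_{m1}^{r}$. Here I must show that the resulting ordered words all have the same skeleton type $(k_1,k_1,\dots,k_p,k_p)$ as $w$ and are strictly longer than $w$.

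\begin{itemize}
\item \emph{Skeleton.} Completing an inserted segment $y^{s}y_{1j}x_{j1}x^{s}$ placed after the $t$-th $x$-letter splits off ordered prime factors of types $(r,r)$ with signatures $(1,\dots,1,j'\mid 1,\dots,1)$ and increasing $r$. On the $y$-side the analogous factors have signatures $(1,\dots,1\mid j',1,\dots,1)$. These are precisely chains and cochains, and they are compatible because the neighbouring remaining factor starts (respectively ends) with the right number of $1$'s. This is exactly why the definitions of chain, cochain and compatibility are phrased as they are. I expect that removing the maximal compatible chains and cochains recovers a word of skeleton type $(p,p)$ in the prime case; this is where a careful case analysis via Lemma \ref{lemskeleton} is needed.
\item \emph{Length.} The length increases strictly, since each nontrivial insertion adds letters and completion only adds letters.
\end{itemize}

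Hence all these terms lie in $\C^{xy}(k_1,\dots,k_p)$ and are $>w$.

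\textbf{Step 3.} Collecting terms gives $\widetilde w=w+\sum c_iw_i$ with $w_i\in\C^{xy}(k_1,\dots,k_p)$ and $w<w_i$. For the statement about $L_0^{yx}$, I will transport the result through Lemma \ref{lemxymn=yxnm}.
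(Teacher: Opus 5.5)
There is a genuine gap in Step~2, and it cannot be closed. The skeleton claim you say you ``expect'' is false, and the statement itself is false as written. You replace each inserted block $y_{1m}^tx_{m1}^t$ by $1-R_t$ independently at each position. So your expansion contains terms in which a position $t'$ with $j_{t'}\neq 1$ is left empty while a later position $t>t'$ carries a partial block. After completion, such a term has a prime factor of type $(t,t)$ with signature $(j_1,\dots,j_t\mid 1,\dots,1)$ and $j_{t'}\neq 1$. This is not the signature of a chain or cochain factor, so it stays in the skeleton and the skeleton type changes.

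Concretely, let $m=n=2$ and $w=x_{12}x_{12}x_{11}y_{11}^3\in\C^{xy}(3)$, so $\widetilde w=x_{12}y_{12}x_{21}x_{12}y_{12}^2x_{21}^2x_{11}y_{11}^3$. Using only $y_{12}x_{21}=1-y_{11}x_{11}$, one finds in the basis $\C^{xy}$
\[
\widetilde w=w-A-P_{11}-P_{12}-P_{21}+Q_{11}+Q_{12}+Q_{21},
\]
where $A=x_{12}y_{11}x_{11}x_{12}x_{11}y_{11}^3$, $P_{cd}=x_{12}x_{12}y_{1c}y_{1d}x_{d1}x_{c1}x_{11}y_{11}^3$ and $Q_{cd}=x_{12}y_{11}x_{11}x_{12}y_{1c}y_{1d}x_{d1}x_{c1}x_{11}y_{11}^3$. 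These are pairwise distinct elements of $\C^{xy}$; $P_{12}$ and $Q_{12}$ are completions, and the others are reduced and ordered. The first prime factor of $P_{cd}$ has signature $(2,2\mid 1,1)$, so $P_{cd}$ has no compatible chain or cochain and lies in $\C^{xy}(2,3)$. By uniqueness of coordinates, no expression as in the lemma exists.

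No choice of $\prec$ rescues the triangularity used in Proposition~\ref{propxybasisD} either. Indeed $\widetilde{P_{11}}=P_{11}-x_{12}y_{11}x_{11}x_{12}y_{11}y_{11}x_{11}^3y_{11}^3$, and the last word lies in $\C^{xy}(3)$, because its first two factors form a compatible chain. Your passage from the prime case to the general case via ``compatibility with concatenation'' is also unjustified, since skeletons are not computed factor by factor, but that point is moot.

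The paper's proof takes a different route. It inserts $g(t)=\sum_{a_1,\dots,a_t}y_{1a_1}\cdots y_{1a_t}x_{a_t1}\cdots x_{a_11}=1$ at every admissible position simultaneously. Every term then keeps the full chain/cochain shape of $\widetilde w$, and $\widetilde w$ is the term with all $a_i=m$. This avoids your ``empty position'' terms, but the same defect reappears elsewhere. Terms such as $x_{12}y_{12}x_{21}x_{12}y_{1c}y_{1d}x_{d1}x_{c1}x_{11}y_{11}^3$ with $(c,d)\neq(2,2)$ contain $y_{12}x_{21}$ at a junction between prime factors of types $(1,1)$ and $(2,2)$. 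A completion inserts nothing there, so these words are not in $\C^{xy}$, and rewriting them produces exactly the $P_{cd}$ above. The paper's ``It follows that'' passes over this. What survives in both arguments is only the length part of your Step~2: all correction terms are strictly longer than $w$. That alone does not give the finite unitriangular blocks the lemma is meant to supply.
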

\begin{proof}
Write $w=w'_1\dots w'_q$ where each $w'_i$ is prime. Let $1\leq i\leq q$. Then 
\[w'_i=x_{i_1, j_1}x_{i_2 ,j_2}\dots x_{i_{p-1},j_{p-1}}x_{i_p, j_p}y_{k_p,l_p}y_{k_{p-1},l_{p-1}}\dots y_{k_2,l_{2}}y_{k_1,l_1}\]
where $i_1,\dots i_p, l_1,\dots,l_p\in \{1,\dots,m\}$ and $j_1,\dots j_p, k_1,\dots,k_p\in\{1,\dots,n\}$. For any $t\geq 1$ set
\[g(t):=\sum_{j_1,\dots,j_t=1}^{m}y_{1,j_1}\dots y_{1,j_t}x_{j_t,1}\dots x_{j_1,1}.\]
Clearly $g(t)=1$ in $L_0$ for any $t\geq 1$. 
Hence
\[w'_i=x_{i_1, j_1}\underbrace{g(1)}_{\text{if }j_1\neq 1}x_{i_2 ,j_2}\underbrace{g(2)}_{\text{if }j_2\neq 1}\dots x_{i_{p-1},j_{p-1}}\underbrace{g(p-1)}_{\text{if }j_{p-1}\neq 1}x_{i_p, j_p}y_{k_p,l_p}\underbrace{g(p-1)}_{\text{if }k_{p-1}\neq 1}y_{k_{p-1},l_{p-1}}\dots \underbrace{g(2)}_{\text{if }k_2\neq 1}y_{k_2,l_{2}}\underbrace{g(1)}_{\text{if }k_{1}\neq 1}y_{k_1,l_1}\]
in $L_0$.
It follows that $w=\tilde w+c_1w_1+\dots+c_kw_k$ where $c_1,\cdots,c_k\in K$ and $w_1,\dots,w_k\in\C^{xy}(k_1,\dots,k_p)$ such that $w<w_i$ for any $1\leq i\leq k$.
\end{proof}

We set $\D^{xy}:=\{\tilde w\mid w\in \C^{xy}\}$ with the convention $\tilde \emptyset=\emptyset$.

\begin{proposition}\label{propxybasisD}
$\D^{xy}$ is a basis for $L_0^{xy}$.
\end{proposition}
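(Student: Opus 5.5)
The argument follows the proof of Proposition \ref{propxybasisC}, with Proposition \ref{propxybasisC} playing the role there played by Proposition \ref{propxybasis}. By Proposition \ref{propxybasisC}, $\C^{xy}$ is a basis for $L_0^{xy}$. Let $\phi:L_0^{xy}\to L_0^{xy}$ be the linear map with $\phi(w)=\widetilde w$ for $w\in\C^{xy}$ and $\phi(\emptyset)=\emptyset$. By definition $\phi(\C^{xy})=\D^{xy}$. It therefore suffices to show that $\phi$ is a vector space automorphism which is injective on $\C^{xy}$. Injectivity on $\C^{xy}$ is in fact automatic once $\phi$ is bijective.

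First, I use the decomposition $\C^{xy}=\{\emptyset\}\sqcup\bigsqcup_{(k_1,\dots,k_p)}\C^{xy}(k_1,\dots,k_p)$ into finite blocks, each of which is finite by Lemma \ref{lemskeleton}. Order $\C^{xy}$ by the total order $<$ fixed before Lemma \ref{lemtransformation}. By Lemma \ref{lemtransformation}, for $w\in\C^{xy}(k_1,\dots,k_p)$ we have
\[\phi(w)=\widetilde w=w+c_1w_1+\dots+c_kw_k\]
with every $w_i\in\C^{xy}(k_1,\dots,k_p)$ and $w<w_i$. Within a block, $<$ is a total order on a finite set, so the coefficient vector of $\phi(w)$ has a $1$ in the position of $w$ and otherwise only entries at positions strictly larger than $w$ in the same block. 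Hence the matrix representation $A$ of $\phi$ with respect to $\C^{xy}$ (Lemma \ref{lemlintrans2}) is a block diagonal matrix. Its blocks are indexed by the tuples $(k_1,\dots,k_p)$, together with a $1\times 1$ block for $\emptyset$, and each block is a finite square lower triangular matrix with ones on the diagonal.

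Next, such a block diagonal matrix is invertible, and its inverse is again block diagonal with finite unitriangular blocks, so in particular column-finite. Therefore $[\cdot]^{-1}_{\C^{xy}}\circ\phi_{A^{-1}}\circ[\cdot]_{\C^{xy}}$ is a two-sided inverse of $\phi$. Consequently $\phi$ is an automorphism of the vector space $L_0^{xy}$, and the image $\phi(\C^{xy})=\D^{xy}$ of a basis is a basis. This also shows that the assignment $w\mapsto\widetilde w$ is injective on $\C^{xy}$, so no two elements of $\C^{xy}$ produce the same element of $\D^{xy}$.

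The only delicate point is the one-sidedness built into Lemma \ref{lemtransformation}: the correction terms $w_i$ must lie in the same finite block $\C^{xy}(k_1,\dots,k_p)$ and be strictly larger than $w$. That lemma already asserts exactly this, so I take it as given. This is the step where the skeleton machinery does the work, since inserting the factors $g(t)$ changes the word but must not change the skeleton type. With Lemma \ref{lemtransformation} in hand, the remaining argument is formal.
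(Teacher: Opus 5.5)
Your proposal is correct and matches the paper's proof. You define $\phi(w)=\widetilde w$ on the basis $\C^{xy}$ and use Lemma \ref{lemtransformation} to see that its matrix with respect to $\C^{xy}$ is a direct sum of finite lower unitriangular blocks (one per skeleton type, plus the $1\times 1$ block for $\emptyset$), so $\phi$ is a vector space automorphism carrying $\C^{xy}$ onto $\D^{xy}$, with all the substantive work delegated to Lemma \ref{lemtransformation} exactly as in the paper.
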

\begin{proof}
Recall from Proposition \ref{propxybasisC} that $\C^{xy}$ is a basis for $L_0^{xy}$. Let $\phi:L_0^{xy}\to L_0^{xy}$ be the linear transformation defined by $\phi(w)=\tilde w$ for any $w\in \C^{xy}$. It follows from Lemma \ref{lemtransformation} that the matrix representation $A$ of $\phi$ with respect to $\C^{xy}$ is a direct sum of finite quadratic lower triangular matrices with ones on the diagonal (cf. \S 2.1). Hence $A$ is invertible and $A^{-1}$ is a column-finite matrix. It follows that $\phi$ is an automorphism of the vector space $L_0^{xy}$ (the inverse of $\phi$ is the linear transformation $[\cdot]_{\C^{xy}}^{-1}\circ \phi_{A^{-1}}\circ[\cdot]_{\C^{xy}}$). Thus $\phi(\C^{xy})=\D^{xy}$ is a basis for $L_0^{xy}$.
\end{proof}

Let $w$ be an ordered $xy$ word. Write $w=w_1\dots w_t$, where 
\[w_s=x_{i_{s,1}, k_{s,1}}\dots x_{i_{s,p_s}, k_{s,p_s}}y_{l_{s,p_s},j_{s,p_s}}\dots y_{l_{s,1},j_{s,1}}\]
for any $1\leq s\leq t$. We call an $s\in\{1\dots,t-1\}$ \textit{wild} if \[(j_{s,P_s},\dots,j_{s,1},i_{s+1,1}\dots i_{s+1,P_s})=(m,\dots,m,m,\dots,m)\]
where $P_s=\min\{p_s,p_{s+1}\}$. We will use the lemma below in the next section.

\begin{lemma}\label{lemDxyconsists}
Let $w$ be an admissible $xy$ word. Write $w=w_1\dots w_t$, where 
\[w_s=x_{i_{s,1}, 1}\dots x_{i_{s,p_s-1}, 1}x_{i_{s,p_s}, k_s}y_{l_s,j_{s,p_s}}y_{1,j_{s,p_s-1}}\dots y_{1,j_{s,1}}\]
for any $1\leq s\leq t$. If the conditions (i) and (ii) below are satisfied, then $w\in \D^{xy}$.
\begin{enumerate}[(i)]
\medskip
\item
$(k_s,l_s)\neq (n,n)$ for any $1\leq s\leq t$.
\medskip
\item
If $1\leq s\leq t-1$ is wild then either 
\[p_s< p_{s+1},~k_s\neq 1\text{ and }l_s=1\]
or
\[p_s> p_{s+1},~k_{s+1}= 1\text{ and }l_{s+1}\neq 1.\]
\end{enumerate}
\end{lemma}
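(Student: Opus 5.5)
Since $\D^{xy}=\{\widetilde v\mid v\in\C^{xy}\}$, the plan is to exhibit, for the given admissible $w$, an explicit word $v\in\C^{xy}$ with $\widetilde v=w$. Concretely, $v$ is obtained by \emph{undoing} the transformation: inside each prime factor $w_s$ we delete the inserted blocks $y_{1m}^tx_{m1}^t$. Because $w$ is admissible with the displayed signature, every prime factor $w_s$ is ordered of type $(p_s,p_s)$ with signature $(1,\dots,1,k_s\mid l_s,1,\dots,1)$. So the natural preimage candidates are obtained by regrouping consecutive prime factors: a compatible chain $u$ followed by $w_{t+1}$, or $w_t$ followed by a compatible cochain $v$, collapse into a single prime ordered word whose transformation reproduces exactly those factors. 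Thus the first step is to read $w$ through its decomposition from Lemma \ref{lemskeleton}, namely $w=u_1w'_1v_1\dots u_qw'_qv_q$. Each block $u_iw'_iv_i$ is then identified with $\widetilde{z_i}$ for a unique ordered prime word $z_i$: the $x$-indices $j$ of $z_i$ are read off from the last letters of the chain pieces, and the $y$-indices $k$ from the cochain pieces. Here I will check that the inserted words $y_{1m}^tx_{m1}^t$ have exactly the right lengths, which forces the increasing types $k_1<\dots<k_p$ in chains. Setting $z=z_1\dots z_q$ gives $\widetilde z=w$ by definition of transformation on products of primes.

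The second step is to show that $z$ lies in $\C^{xy}$, i.e. $z=\hat y$ for some $y\in\B^{xy}$. For this we take $y$ to be $z$ with every occurrence of $y_{1m}^{r}x_{m1}^{r}$ (with $r$ maximal) at the junction of two consecutive prime factors deleted. This is the inverse of completion, and it produces a word of the prescribed type with $0\le r_i<\min(k_i,k_{i+1})$. We must verify that $y$ is irreducible, i.e. contains no forbidden subword $x_{in}y_{ni'}$ or $y_{jm}x_{mj'}$. Forbidden words of the first kind can only occur at the peak of a prime factor, i.e. at $x_{\cdot,k_s}y_{l_s,\cdot}$; these are excluded by condition (i) $(k_s,l_s)\neq(n,n)$. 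Forbidden words of the second kind occur at junctions between factors, after deletion of the $y_{1m}x_{m1}$ blocks. They arise exactly when the junction is ``wild'', i.e. when all the indices $j_{s,\cdot}$ and $i_{s+1,\cdot}$ adjacent to the junction equal $m$. We then need that maximal deletion still leaves a non-forbidden transition, or that the pieces do not merge; condition (ii) gives precisely this. If $p_s<p_{s+1}$ with $k_s\neq1$ and $l_s=1$, then $w_s$ is a chain piece compatible with $w_{s+1}$, so in $z$ they are merged into one prime factor and the wild junction disappears. The case $p_s>p_{s+1}$ is symmetric via cochains.

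The main obstacle is this last bookkeeping: making sure that, in the wild case, the regrouping into chains and cochains given by the skeleton decomposition really absorbs every wild junction, and that in non-wild junctions maximal deletion of $y_{1m}^rx_{m1}^r$ (with $r<\min$) leaves some letter pair not of the form $y_{jm}x_{mj'}$. I would first handle $t=2$ by explicit case analysis on $p_1$ versus $p_2$, and then argue that junctions are independent, since reductions at distinct junctions do not interact, so the general case follows by induction on $t$.
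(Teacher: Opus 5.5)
There is a genuine gap in your first step. Compatibility of chains and cochains, and hence the skeleton and the decomposition of Lemma \ref{lemskeleton}, is defined only through types and signatures, i.e.\ through the second indices of $x$-letters and the first indices of $y$-letters. A block inserted by the transformation is the specific word $y_{1m}^tx_{m1}^t$, which also fixes the other indices. So a block $u_iw'_iv_i$ of the skeleton decomposition need not equal $\widetilde{z_i}$ for any ordered prime $z_i$.

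For instance, take $m=n=2$ and $w=x_{1,2}y_{1,1}\,x_{1,1}x_{1,1}y_{1,1}y_{1,1}$. Here $x_{1,2}y_{1,1}$ (signature $(2\mid 1)$) is a maximal compatible chain, and the skeleton is $x_{1,1}x_{1,1}y_{1,1}y_{1,1}$. Your step 1 therefore asks for an ordered prime $z$ of type $(2,2)$ with $\widetilde z=w$. Such a $z$ would have to begin with $x_{1,2}$, and then $\widetilde z$ begins with $x_{1,2}y_{1,2}x_{2,1}$, which is not how $w$ begins. Yet $w$ satisfies (i) and (ii): $(k_1,l_1)=(2,1)$, $(k_2,l_2)=(1,1)$, and $s=1$ is not wild since $(j_{1,1},i_{2,1})=(1,1)$. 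Indeed $w\in\B^{xy}$ is ordered and admissible, so $w=\widetilde{\hat w}\in\D^{xy}$ with $w$ itself as preimage. This junction must \emph{not} be merged. So the difficulty is the opposite of the one you flag at the end: the skeleton regrouping absorbs too many junctions, not too few.

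The correct criterion for merging is wildness, and this is how the paper proceeds. For admissible $w$, take a wild $s$ together with the alternative forced by (ii): $l_s=1$ if $p_s<p_{s+1}$, respectively $k_{s+1}=1$ if $p_s>p_{s+1}$ (the case $p_s=p_{s+1}$ is excluded). This says exactly that the letters around the junction of $w_s$ and $w_{s+1}$ form the full block $y_{1m}^{P_s}x_{m1}^{P_s}$. Deleting that block yields an ordered prime word whose transformation is $w_sw_{s+1}$, and these deletions compose along runs of consecutive wild junctions. Deleting the blocks at the wild junctions only gives an ordered $v$ with $\widetilde v=w$. No junction of $v$ is wild, because a merge only enlarges the index tuple that witnessed non-wildness of a neighbouring junction.

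This non-wildness is exactly the input your second step needs but does not have. It keeps the maximal block $y_{1m}^rx_{m1}^r$ at each junction shorter than $\min(p_s,p_{s+1})$, so that the word obtained by deleting these blocks has the type required in the definition of completion. It is also what is used to exclude a forbidden $y_{jm}x_{mj'}$ at the new junction. With step 1 replaced in this way, your second step (inverting the completion, with (i) handling the peaks) is essentially the paper's Claim 2.
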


\begin{proof}
We prove that $w \in \mathcal{D}^{xy}$ by constructing a word $u \in \mathcal{B}^{xy}$ such that $w = \tilde{\hat{u}}$.
The construction proceeds in two steps.\\
\\
\textbf{Step 1: Remove some maximal subwords of the form $y_{1m}^r x_{m1}^r$ from $w$ to obtain $v$.}\\
Let $v$ be the word obtained from $w$ by removing from each subword $w_sw_{s+1}$ where $s$ is wild the entire block $y_{1m}^{P_s} x_{m1}^{P_s}$.\\
\\
\textbf{Claim 1:} $v$ is an ordered $xy$-word, and $w = \tilde{v}$. Moreover, if $v=v_1\dots v_{t'}$ where each $v_s$ is prime, then no $s\in \{1,\dots,t'\}$ is wild.

\begin{proof}[Proof of Claim 1]
Let $1\leq s\leq t-1$ be wild. By condition (ii), either $p_s< p_{s+1}$, $k_s\neq 1$ and $l_s=1$
, or $p_s> p_{s+1}$, $k_{s+1}= 1$ and $l_{s+1}\neq 1$.\\
\\
\emph{Case (a):} $p_s < p_{s+1}$, $k_s \neq 1$, $l_s = 1$.
After deleting $y_{1m}^{P_s} x_{m1}^{P_s} = y_{1m}^{p_s} x_{m1}^{p_s}$, the remaining parts are:
\begin{itemize}
    \item From $w_s$: $x_{i_{s,1},1} \dots x_{i_{s,p_s-1},1} x_{i_{s,p_s},k_s}$.
    \medskip
    \item From $w_{s+1}$: $x_{i_{s+1,p_s+1},1} \dots x_{i_{s+1,p_{s+1}},k_{s+1}} y_{l_{s+1},j_{s+1,p_{s+1}}} \dots y_{1,j_{s+1,1}}$.
\end{itemize}
These two parts concatenate to form a prime $xy$-word $v_s$ of type $(p_{s+1},p_{s+1})$, which is ordered. Clearly $w_sw_{s+1}$ is the transformation of $v_s$.\\
\\
\emph{Case (b):} $p_s > p_{s+1}$, $k_{s+1}=1$, $l_{s+1} \neq 1$.
A similar argument shows that after deletion we again obtain an ordered prime $xy$-word $v_s$ whose transformation is $w_sw_{s+1}$.\\
\\
It follows that $v$ is an ordered $xy$-word, and $w = \tilde{v}$. We leave it to the reader to check that if $v=v_1\dots v_{t'}$ where each $v_s$ is prime, then no $s\in \{1,\dots,t'\}$ is wild.
\end{proof}
\noindent
\textbf{Step 2: Remove all maximal subwords of the form $y_{1m}^r x_{m1}^r$ from $v$ to obtain $u$.}\\
\\
Let $v = v_1 \dots v_{t'}$ be the prime factorization of $v$ from Claim 1. By that claim, no $s \in \{1,\dots,t'-1\}$ is wild. Let $u$ be the word obtained from $v$ by deleting every maximal subword of the form $y_{1m}^r x_{m1}^r$.\\
\\
\textbf{Claim 2:} $u \in \mathcal{B}^{xy}$ and $v = \hat{u}$.

\begin{proof}[Proof of Claim 2]
First we show that $u$ contains no forbidden subwords. Consider a maximal subword $B = y_{1m}^r x_{m1}^r$ in $v$. 
Since $v$ is ordered, $B$ must span from some $y$-tail of $v_s$ to some $x$-head of $v_{s+1}$. 
After deleting $B$, we concatenate what remains of $v_s$ and $v_{s+1}$. The newly formed connection is between 
a $y_{1,b_{r+1}}$ (with $b_{r+1} \neq m$ by maximality of $r$) and an $x_{a_{r+1},1}$ (with $a_{r+1} \neq m$ by maximality of $r$). 
Thus the splice does not create a forbidden subword $y_{jm} x_{mj'}$.

Moreover, by condition (i) and the fact that deletion removes a balanced block, no new $x_{in} y_{ni'}$ pattern can appear. 
Hence $u$ contains no forbidden subwords of either type.

Next we prove that $u$ is a product of prime $xy$-words. 
Let $v_s v_{s+1} \dots v_{s+k}$ be a maximal sequence of consecutive factors in $v$ such that between each adjacent pair 
there was a deleted block $y_{1m}^r x_{m1}^r$. After removing all these blocks, the remaining parts of $v_s, v_{s+1}, \dots, v_{s+k}$ 
merge into a single word $u'$. We claim $u'$ is prime.

To see this, note that each deletion removes a balanced block. Therefore, every proper prefix of $u'$ has positive degree: 
if some proper prefix had degree zero, then tracing back through the deletions would reveal a proper prefix of the original $w$ 
with degree zero, contradicting the primeness of the factors in $w$. Hence $u'$ is prime.

Applying this argument to each maximal sequence of merged factors shows that $u$ can be written as $u = u_1 \dots u_\ell$ 
where each $u_j$ is prime. Together with the absence of forbidden subwords, this gives $u \in \mathcal{B}^{xy}$.

Finally, observe that at each place where we removed a block $y_{1m}^r x_{m1}^r$, the word $u$ has precisely the configuration 
that calls for the insertion of that same block under the completion operation $\hat{\cdot}$. Therefore $v = \hat{u}$.
\end{proof}
\vspace{-0.4cm}
\noindent
\textbf{Conclusion.}
From Claims 1 and 2 we have $w = \tilde{v} = \tilde{\hat{u}}$. Since $u \in \mathcal{B}^{xy}$ and $w = \tilde{\hat{u}}$, 
by the definition of $\mathcal{D}^{xy}$ we conclude $w \in \mathcal{D}^{xy}$.
\end{proof}

\section{The main results}

In this section we fix positive integers $m$ and $n$. 

\subsection{The algebras $A(m,n)$}
In the definition below we use the conventions from \cite[\S 2.1]{Raimund5}.
\begin{definition}\label{defAmn}
We denote the algebra presented by the generating set
\[\{e^{p,k,l}\mid 1\leq p,~1\leq k,l\leq n\},\]
where each $e^{p,k,l}$ is an $m^{p}\times m^{p}$ matrix whose entries are symbols, and the relations
\begin{enumerate}[(i)]
\medskip
\item $e^{p,k,l}e^{p,k',l'}=\delta_{l,k'}e^{p,k,l'}$ and
\medskip
\item $\sum_{k=1}^ne^{p,k,k}=\bigoplus^m e^{p-1,1,1}$
\medskip
\end{enumerate}
by $A(m,n)$. In relation (ii) we set $e^{0,1,1}:=(1)$.
\end{definition}

\begin{lemma}\label{lemAmnrel}
The following relations hold for matrices over the algebra $A(m,n)$.
\begin{enumerate}[(i)]
\setcounter{enumi}{2} 
\medskip
\item $e^{p,1,1}\star e^{p',k',l'}=e^{p',k',l'}$ if $p<p'$.
\medskip
\item $e^{p,k,l}\star e^{p',k',l'}=0$ if $p<p'$ and $l\neq 1$.
\medskip
\item $e^{p,k,l}\star e^{p',1,1}=e^{p',k',l'}$ if $p>p'$.
\medskip
\item $e^{p,k,l}\star e^{p',k',l'}=0$ if $p>p'$ and $k'\neq 1$.
\medskip
\end{enumerate}
\end{lemma}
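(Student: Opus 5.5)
The plan is to derive all four relations from the defining relations (i) and (ii) of Definition \ref{defAmn}, using that $(\Mf_m(A(m,n)),\star)$ is a monoid (so $\star$ is associative) and that $\star$ agrees with ordinary matrix multiplication on matrices of equal size. Here each $e^{p,k,l}$ lies in $\Mat_{m^p\times m^p}(A(m,n))$, so we work in $\Mf_m$, not $\Mf_n$.

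\textbf{Base case of (iii).} First I would treat (iii) with $p'=p+1$. By Definition \ref{def23},
\[e^{p,1,1}\star e^{p+1,k',l'}=\Big(\bigoplus^m e^{p,1,1}\Big)e^{p+1,k',l'}.\]
By relation (ii) at level $p+1$, $\bigoplus^m e^{p,1,1}=\sum_{k=1}^n e^{p+1,k,k}$. By relation (i), the product $\big(\sum_k e^{p+1,k,k}\big)e^{p+1,k',l'}$ equals $e^{p+1,k',l'}$. This gives (iii) when $p'=p+1$; the case $k'=l'=1$ in particular gives $e^{p,1,1}\star e^{p+1,1,1}=e^{p+1,1,1}$.

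\textbf{General case of (iii).} I would induct on $p'-p$. If $p'-p\geq 2$, the base case gives $e^{p'-1,1,1}\star e^{p',k',l'}=e^{p',k',l'}$. Hence, by associativity of $\star$,
\[e^{p,1,1}\star e^{p',k',l'}=\big(e^{p,1,1}\star e^{p'-1,1,1}\big)\star e^{p',k',l'}.\]
The induction hypothesis, applied with $k'=l'=1$, gives $e^{p,1,1}\star e^{p'-1,1,1}=e^{p'-1,1,1}$, and applying the base case once more finishes (iii).

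\textbf{Relation (iv).} For $l\neq 1$ I would insert $e^{p,1,1}$ using (iii) and then use (i) at the same level:
\[e^{p,k,l}\star e^{p',k',l'}=e^{p,k,l}\star\big(e^{p,1,1}\star e^{p',k',l'}\big)=\big(e^{p,k,l}e^{p,1,1}\big)\star e^{p',k',l'}=0,\]
since $e^{p,k,l}e^{p,1,1}=\delta_{l,1}e^{p,k,1}=0$.

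\textbf{Relations (v) and (vi).} These are the mirror images. For (v) I read the right-hand side as $e^{p,k,l}$, which is evidently what is intended. For $p=p'+1$ we get $e^{p,k,l}\big(\bigoplus^m e^{p',1,1}\big)=e^{p,k,l}\sum_{k''} e^{p,k'',k''}=e^{p,k,l}$ by (ii) and (i). The general case follows by the same induction, now peeling from the right. For (vi), with $k'\neq 1$, write $e^{p',k',l'}=e^{p',1,1}\star\big(e^{p',k',l'}\big)$ as in (iii) but at equal level: by (i), $e^{p',1,1}e^{p',k',l'}=\delta_{1,k'}e^{p',1,l'}=0$. Then
\[e^{p,k,l}\star e^{p',k',l'}=\big(e^{p,k,l}\star e^{p',1,1}\big)\star e^{p',k',l'}=e^{p,k,l}\star\big(e^{p',1,1}e^{p',k',l'}\big)=0,\]
using (v) for the first equality.

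The only delicate point is to use associativity of $\star$ legitimately. That property was asserted after Definition \ref{def23}, and everything else reduces to the two defining relations.
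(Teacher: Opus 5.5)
Your argument is correct, and you are right that the right-hand side of (v) should read $e^{p,k,l}$, as relation (v) of Lemma \ref{lemAmnpres} confirms. Your route differs from the paper's in how the argument is organized and in what it relies on.

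The paper does not use a one-level base case followed by induction. Instead it unfolds $e^{p',k',l'}$ in one chain as $(\bigoplus^{m^{p'-p}}e^{p,1,1})\cdots(\bigoplus^{m}e^{p'-1,1,1})e^{p',k',l'}$, alternately inserting $\sum_{k''}e^{q,k'',k''}$ by (i) and rewriting it by (ii). It then multiplies blockwise by $\bigoplus^{m^{p'-p}}e^{p,k,l}$, which produces a factor $\delta_{l,1}$ and so gives (iii) and (iv) simultaneously. That computation uses only the definition of $\star$, ordinary matrix multiplication, and the identities $\bigoplus^{s}\bigoplus^{t}A=\bigoplus^{st}A$ and $(\bigoplus^{t}A)(\bigoplus^{t}B)=\bigoplus^{t}(AB)$. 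It never invokes associativity of $\star$, which the paper asserts after Definition \ref{def23} but does not prove.

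Your version uses that associativity to get a shorter, more modular proof, in which (iv) and (vi) follow immediately from (iii) and (v) by inserting an idempotent. Both proofs rest on the single identity $(\bigoplus^{m}e^{q-1,1,1})e^{q,k',l'}=e^{q,k',l'}$; your induction is the paper's telescoping product folded up.

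There is one wording slip in (vi). You say to write $e^{p',k',l'}=e^{p',1,1}\star e^{p',k',l'}$, but for $k'\neq 1$ the right side is $0$. What your displayed computation actually (and correctly) does is insert $e^{p',1,1}$ next to the left factor via (v), i.e.\ $e^{p,k,l}=e^{p,k,l}\star e^{p',1,1}$.
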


\begin{proof}
If $p<p'$, then clearly
\begin{align*}
&e^{p,k,l}\star e^{p',k',l'}\\
\overset{(i)}{=}~&e^{p,k,l}\star ((\sum_{k''=1}^n e^{p',k'',k''})e^{p',k',l'})\\
\overset{(ii)}{=}~&e^{p,k,l}\star ((\bigoplus^m e^{p'-1,1,1})e^{p',k',l'})\\
\overset{(i)}{=}~&e^{p,k,l}\star ((\bigoplus^m\sum_{k''=1}^n e^{p'-1,k'',k''})(\bigoplus^m e^{p'-1,1,1})e^{p',k',l'})\\
\overset{(ii)}{=}~&e^{p,k,l}\star ((\bigoplus^{m^2}e^{p'-2,1,1})(\bigoplus^m e^{p'-1,1,1})e^{p',k',l'})\\
\vdots~&\\
=~&e^{p,k,l}\star ((\bigoplus^{m^{p'-p}}e^{p,1,1})\dots(\bigoplus^{m^2}e^{p'-2,1,1})(\bigoplus^m e^{p'-1,1,1})e^{p',k',l'})\\
=~&(\bigoplus^{m^{p'-p}}e^{p,k,l})(\bigoplus^{m^{p'-p}}e^{p,1,1})\dots(\bigoplus^{m^2}e^{p'-2,1,1})(\bigoplus^m e^{p'-1,1,1})e^{p',k',l'}\\
=~&(\bigoplus^{m^{p'-p}}e^{p,k,l}e^{p,1,1})\dots(\bigoplus^{m^2}e^{p'-2,1,1})(\bigoplus^m e^{p'-1,1,1})e^{p',k',l'}\\
=~&\delta_{l,1}(\bigoplus^{m^{p'-p}}e^{p,k,1})\dots(\bigoplus^{m^2}e^{p'-2,1,1})(\bigoplus^m e^{p'-1,1,1})e^{p',k',l'}.
\end{align*}
Relations (iii) and (iv) follow. Similarly one can show that relations (v) and (vi) hold.
\end{proof}

The lemma below follows from Definition \ref{defAmn} and Lemma \ref{lemAmnrel}. 
\begin{lemma}\label{lemAmnpres}
$A(m,n)$ is the algebra presented by the generating set 
\[\mathcal{Y} =\{e^{p,k,l}_{ij}\mid 1\leq p,~1\leq k,l\leq n,~1\leq i,j\leq m^p\}\]
and the relations
\begin{enumerate}[(i)]
\bigskip
\item $\sum_{r=1}^{m^p}e^{p,k,l}_{ir}e^{p,k',l'}_{rj}=\delta_{l,k'}e^{p,k,l'}_{ij}$,
\bigskip
\item $\sum_{k=1}^{n}e^{p,k,k}_{ij}=\begin{cases}
e^{p-1,1,1}_{rs},\quad&\text{if }i=tm^{p-1}+r \text{ and }j=tm^{p-1}+s\text{ for some }\\
&0\leq t<m\text{ and }1\leq r,s\leq m^{p-1},\\
0,\quad&\text{otherwise,}
\end{cases}$
\bigskip
\item $\sum_{r=1}^{m^{p}}e^{p,1,1}_{ir}e^{p',k',l'}_{tm^{p}+r,j}=e^{p',k',l'}_{tm^{p}+i,j}\hspace{0.5cm}\quad(p<p',~1\leq i\leq m^{p},~1\leq j\leq m^{p'},~0\leq t<m^{p'-p})$,
\bigskip
\item $\sum_{r=1}^{m^{p}}e^{p,k,l}_{ir}e^{p',k',l'}_{tm^{p}+r,j}=0\quad\hspace{1.65cm}(p<p',~l\neq 1,~1\leq i\leq m^{p},~1\leq j\leq m^{p'},~0\leq t<m^{p'-p})$,
\bigskip
\item $\sum_{r=1}^{m^{p'}}e^{p,k,l}_{i,tm^{p'}+r}e^{p',1,1}_{rj}=e^{p,k,l}_{i,tm^{p'}+j}\quad\hspace{0.267cm}(p>p',~1\leq i\leq m^{p},~1\leq j\leq m^{p'},~0\leq t<m^{p-p'})$,
\bigskip
\item $\sum_{r=1}^{m^{p'}}e^{p,k,l}_{i,tm^{p'}+r}e^{p',k',l'}_{rj}=0\quad\hspace{1.32cm}(p>p',~k'\neq 1,~1\leq i\leq m^{p},~1\leq j\leq m^{p'},~0\leq t<m^{p-p'})$.
\end{enumerate}
\end{lemma}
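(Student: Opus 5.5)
The statement to prove is Lemma \ref{lemAmnpres}. I will show that the matrix presentation of Definition \ref{defAmn} and the entrywise presentation listed in the lemma define the same algebra. By the conventions of \cite[\S 2.1]{Raimund5}, Definition \ref{defAmn} means the following: $A(m,n)$ is generated by the symbols $e^{p,k,l}_{ij}$, and each matrix identity in (i) and (ii) is imposed entry by entry.

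\emph{Step 1: relations (i) and (ii).} Expanding the matrix products entrywise, relation (i) of Definition \ref{defAmn} becomes exactly relation (i) of the lemma. For relation (ii), the matrix $\bigoplus^m e^{p-1,1,1}$ is block diagonal with $m$ blocks of size $m^{p-1}$. Its $(i,j)$ entry is $e^{p-1,1,1}_{rs}$ when $i=tm^{p-1}+r$ and $j=tm^{p-1}+s$ lie in the same diagonal block, and $0$ otherwise. So the case distinction in (ii) of the lemma is just the entrywise form of the matrix relation. For $p=1$ we use the convention $e^{0,1,1}=(1)$, which turns the right-hand side into $\delta_{ij}$. Hence the algebra presented by (i) and (ii) of the lemma is $A(m,n)$ by definition.

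\emph{Step 2: relations (iii)--(vi).} Let $\pi$ be the natural map from the algebra presented by $\mathcal Y$ and (i)--(vi) onto the algebra presented by $\mathcal Y$ and (i)--(ii). This map is an isomorphism once we check that (iii)--(vi) already hold in $A(m,n)$. That check is Lemma \ref{lemAmnrel}, read entrywise.

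Take the case $p<p'$. The product $e^{p,k,l}\star e^{p',k',l'}$ is $(\bigoplus^{m^{p'-p}}e^{p,k,l})e^{p',k',l'}$. Its $(tm^p+i,j)$ entry is $\sum_{r=1}^{m^p}e^{p,k,l}_{ir}e^{p',k',l'}_{tm^p+r,j}$, since row $tm^p+i$ of the block diagonal matrix is supported on the $t$-th block. So Lemma \ref{lemAmnrel}(iii),(iv) give (iii),(iv) of the lemma.

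Take the case $p>p'$. The product is $e^{p,k,l}(\bigoplus^{m^{p-p'}}e^{p',k',l'})$, and its $(i,tm^{p'}+j)$ entry is $\sum_{r}e^{p,k,l}_{i,tm^{p'}+r}e^{p',k',l'}_{rj}$. Lemma \ref{lemAmnrel}(v),(vi) then give (v),(vi). The right-hand side of Lemma \ref{lemAmnrel}(v) should read $e^{p,k,l}$, which is what its proof yields and what the lemma uses.

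\emph{Main obstacle.} The only real care needed is the index bookkeeping in the $\star$ product. One has to match the block index $t$ and the within-block index $r$ with the ordering of the rows of $\bigoplus^{m^{p'-p}}A$, and confirm that the ranges $0\le t<m^{p'-p}$ (respectively $0\le t<m^{p-p'}$) cover every row (respectively column). Once this is written out for one case, the remaining cases are symmetric. Adding relations that already hold in a presented algebra does not change it, which gives the lemma.
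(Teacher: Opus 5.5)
Your proposal is correct and takes the same route as the paper, which simply says the lemma follows from Definition \ref{defAmn} and Lemma \ref{lemAmnrel}: relations (i) and (ii) are the definition written entrywise, and (iii)--(vi) are Lemma \ref{lemAmnrel} read entrywise through the block structure of the $\star$-product, hence redundant. You are also right that the right-hand side of Lemma \ref{lemAmnrel}(v) should be $e^{p,k,l}$, which is what its proof actually gives.
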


We denote by $T$ be the reduction system 
\begin{enumerate}[(i)]
\bigskip
\item $e^{p,k,l}_{i,m^p}e^{p,k',l'}_{m^p,j}\longrightarrow\delta_{l,k'}e^{p,k,l'}_{ij}-\sum_{r=1}^{m^p-1}e^{p,k,l}_{ir}e^{p,k',l'}_{rj}$,
\bigskip
\item $e^{p,n,n}_{ij}\longrightarrow\begin{cases}
e^{p-1,1,1}_{rs}-\sum_{k=1}^{n-1}e^{p,k,k}_{ij},\quad&\text{if }i=tm^{p-1}+r \text{ and }j=tm^{p-1}+s\text{ for some }\\
&0\leq t<m\text{ and }1\leq r,s\leq m^{p-1},\\
-\sum_{k=1}^{n-1}e^{p,k,k}_{ij},\quad&\text{otherwise,}
\end{cases}$
\bigskip
\item $e^{p,1,1}_{i,m^p}e^{p',k',l'}_{(t+1)m^{p},j}\longrightarrow e^{p',k',l'}_{tm^{p}+i,j}-\sum_{r=1}^{m^{p}-1}e^{p,1,1}_{ir}e^{p',k',l'}_{tm^{p}+r,j}\hspace{0.5cm}\quad(p<p',~1\leq i\leq m^{p},~1\leq j\leq m^{p'},$\\
\phantom{s}\hspace{12.4cm}$0\leq t<m^{p'-p})$,
\bigskip
\item $e^{p,k,l}_{i,m^p}e^{p',k',l'}_{(t+1)m^{p},j}\longrightarrow -\sum_{r=1}^{m^{p}-1}e^{p,k,l}_{ir}e^{p',k',l'}_{tm^{p}+r,j}\quad\hspace{1.95cm}(p<p',~l\neq 1,~1\leq i\leq m^{p},~1\leq j\leq m^{p'},$\\
\phantom{s}\hspace{13.5cm}$0\leq t<m^{p'-p})$,
\bigskip
\item $e^{p,k,l}_{i,(t+1)m^{p'}}e^{p',1,1}_{m^{p'},j}\longrightarrow e^{p,k,l}_{i,tm^{p'}+j}-\sum_{r=1}^{m^{p'}-1}e^{p,k,l}_{i,tm^{p'}+r}e^{p',1,1}_{rj}\quad\hspace{0.0cm}(p>p',~1\leq i\leq m^{p},~1\leq j\leq m^{p'},$\\
\phantom{s}\hspace{12.5cm}$0\leq t<m^{p-p'})$,
\bigskip
\item $e^{p,k,l}_{i,(t+1)m^{p'}}e^{p',k',l'}_{m^{p'},j}\longrightarrow -\sum_{r=1}^{m^{p'}-1}e^{p,k,l}_{i,tm^{p'}+r}e^{p',k',l'}_{rj}\quad\hspace{1.24cm}(p>p',~k'\neq 1,~1\leq i\leq m^{p},~1\leq j\leq m^{p'},$\\
\phantom{S}\hspace{13.6cm}$0\leq t<m^{p-p'})$,
\end{enumerate}
for the free algebra $K\langle \mathcal{Y} \rangle $. More formally, $T$ is the set consisting of all pairs $\sigma = (w_\sigma , f_\sigma )$ where $w_\sigma$ equals a monomial on the left hand side of an arrow above and $f_\sigma$ the corresponding term on the right hand side of that arrow, cf. \cite[\S 1]{bergman78}. 

We denote by $\langle \mathcal{Y} \rangle_{\irr}$ the subset of the free monoid $\langle \mathcal{Y} \rangle$ consisting of all words that are not of the form $ww_{\sigma} w'$ for some $w,w'\in \langle \mathcal{Y} \rangle$ and $\sigma\in T$. We denote by $K\langle \mathcal{Y} \rangle_{\irr}$ the $K$-subspace of $K\langle \mathcal{Y} \rangle$ spanned by $\langle \mathcal{Y} \rangle_{\irr}$.  


\begin{theorem}\label{thmAmnbasis}
The algebra $A(m,n)$ may be identified with the $K$-vector space $K\langle \mathcal{Y} \rangle_{\irr}$ made an algebra by the multiplication $a\cdot b = r_T(ab)$. In particular, the image of $\langle \mathcal{Y} \rangle_{\irr}$ in $A(m,n)$ is a linear basis for $A(m,n)$. Moreover, $A(m,n)\cong L(m,n)_0^{xy}$.
\end{theorem}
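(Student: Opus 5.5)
The argument has two parts. First I establish the basis statement by Bergman's Diamond Lemma \cite[Theorem 1.2]{bergman78} applied to $T$. Then I realize $A(m,n)\cong L(m,n)_0^{xy}$ by an explicit homomorphism that carries irreducible words onto the basis $\D^{xy}$ of Proposition \ref{propxybasisD}.

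\textbf{Diamond Lemma.} By Lemma \ref{lemAmnpres}, $A(m,n)=\langle\mathcal{Y}\mid w_\sigma=f_\sigma~(\sigma\in T)\rangle$, since each rule of $T$ is just one of the relations (i)--(vi) solved for a leading monomial. I need a semigroup partial order on $\langle\mathcal{Y}\rangle$ with the descending chain condition that is compatible with $T$. I would first compare words by $\sum p$, the sum of the levels of their letters; rule (ii) lowers this or leaves it fixed, while the quadratic rules (iii), (v) replace two letters by one. Ties would be broken by length, then by the number of letters $e^{p,n,n}$, then by a lexicographic weight on the matrix indices, chosen so that index $m^p$ (resp. $(t+1)m^{p}$) is heavier than the indices appearing on the right-hand sides. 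After this, all ambiguities must be resolved. There are overlaps of the form $e\,e\,e$ among rules (i), (iii)--(vi), and inclusion ambiguities where a letter $e^{p,n,n}_{ij}$ sits inside the left side of a quadratic rule. Each of these is the matrix identity $(ab)c=a(bc)$ for $\star$-products in $\Mf_m(A)$, combined with Lemma \ref{lemAmnrel}. So I would check them blockwise in matrix form, as in the proof of Theorem \ref{thmLmnbasis}, rather than entrywise. I expect this resolution to be tedious but mechanical.

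\textbf{The homomorphism $\Phi$.} For the isomorphism I define $\Phi:A(m,n)\to L(m,n)_0^{xy}$ using the matrices $X,Y$ of Remark \ref{remXY}. Let $x_k$ be the $k$-th column of $X$ and $y_l$ the $l$-th row of $Y$, viewed in $\Mf_m(L)$. Set
\[
\Phi(e^{p,k,l}) = x_1^{\star}\cdots x_k\star y_l\cdots y_1^{\star},
\]
that is, the $m^p\times m^p$ matrix $x_1\star\cdots\star x_1\star x_k\star y_l\star y_1\star\cdots\star y_1$ with $p-1$ copies of $x_1$ and $p-1$ copies of $y_1$, in the nested ordering of Lemma \ref{lemeasyind}. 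By Lemma \ref{lemeasyind}, its entries are exactly the ordered prime $xy$ words of type $(p,p)$ whose signature is $(1,\dots,1,k\mid l,1,\dots,1)$. Relation (i) follows from $y_lx_{k'}=\delta_{lk'}$, which is $YX=I_n$. Relation (ii) follows from $\sum_k x_ky_k=XY=I_m$, which collapses the middle factor to the identity and leaves $\bigoplus^m\Phi(e^{p-1,1,1})$. Hence $\Phi$ is well defined. Its image contains every admissible prime $xy$ word, since those are precisely the entries $\Phi(e^{p,k,l})_{ij}$. By Proposition \ref{propxybasisD}, products of such words span $L_0^{xy}$, so $\Phi$ is surjective.

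\textbf{Injectivity.} I would show that $\Phi$ maps $\langle\mathcal{Y}\rangle_{\irr}$ bijectively onto $\D^{xy}$; linear independence then follows from Proposition \ref{propxybasisD}. An irreducible word $e^{p_1,k_1,l_1}_{i_1j_1}\cdots e^{p_t,k_t,l_t}_{i_tj_t}$ is sent to the admissible word $w_1\cdots w_t$. The absence of rule (ii) gives $(k_s,l_s)\neq(n,n)$, which is condition (i) of Lemma \ref{lemDxyconsists}. The absence of rules (i), (iii)--(vi) at a junction means that the relevant column index of one factor and row index of the next are not both the maximal ones. In terms of $L$, the maximal index $m^p$ encodes the $y$-tail $y_{\cdot m}\cdots y_{1m}$ and the $x$-head $x_{m1}\cdots$, so this is exactly the failure of wildness. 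The exceptional pass-through cases correspond to rules (iii)--(vi) with $l=1$ or $k'=1$ excluded, and these match condition (ii) of Lemma \ref{lemDxyconsists}. So Lemma \ref{lemDxyconsists} gives $\Phi(\langle\mathcal{Y}\rangle_{\irr})\subseteq\D^{xy}$. Distinct irreducible words give distinct words of $\langle\mathcal{X}\rangle$, because the factorization into primes is unique by Lemma \ref{lemprime1}. The inclusion is therefore injective.

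\textbf{Main obstacle.} This last translation is where I expect the real difficulty: verifying that the index condition "$j_s=m^{p_s}$ and $i_{s+1}\equiv m^{p_s}$" is exactly wildness, with $P_s=\min(p_s,p_{s+1})$. I would attack it with the explicit entry formula of Lemma \ref{lemeasyind}, reading $m^p$ in base $m$ as the digit string $(m,\dots,m)$. Surjectivity onto $\D^{xy}$ is not needed, since spanning was already obtained above.
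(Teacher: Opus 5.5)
Your proposal is correct. It agrees with the paper on the homomorphism $\Phi$ and on surjectivity via admissible prime words. It also agrees on injectivity through Lemma \ref{lemDxyconsists}:
\begin{itemize}
\item irreducibility under (ii) gives $(k_s,l_s)\neq(n,n)$;
\item at a wild junction, irreducibility under (i), (iii)--(vi) rules out $p_s=p_{s+1}$ and forces the two alternatives of condition (ii).
\end{itemize}

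You differ in how you obtain the normal form for $A(m,n)$. You plan to resolve all overlap and inclusion ambiguities of $T$ directly. The paper resolves none. It uses the compatible order with DCC only to get reduction-finiteness, hence that $\langle\mathcal{Y}\rangle_{\irr}$ spans $A(m,n)$. Since $\phi$ sends distinct irreducible words to distinct elements of the basis $\D^{xy}$, these words are linearly independent in $A(m,n)$. That forces reduction-uniqueness, because two final values of one element differ by an element of the defining ideal lying in $K\langle\mathcal{Y}\rangle_{\irr}$. Then \cite[Theorem 1.2]{bergman78} gives the identification.

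Your own injectivity step already contains this argument, so the ambiguity check is redundant. It is also the one part you leave unargued. The justification you sketch (associativity of $\star$ plus Lemma \ref{lemAmnrel}) is not a resolution argument as stated. Lemma \ref{lemAmnrel} records relations valid in $A(m,n)$, and agreement in $A(m,n)$ of the two reductions of an ambiguity is automatic. Resolvability instead requires agreement via reductions in $K\langle\mathcal{Y}\rangle$, or modulo the span of the $u(w_\sigma-f_\sigma)v$ with $uw_\sigma v$ below the ambiguity.

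Cut your first part down to ``compatible order with DCC, hence irreducible words span.'' Your proof is then complete and coincides with the paper's. A genuine direct resolution would only buy a normal form for $A(m,n)$ independent of Lemma \ref{lemDxyconsists}, and you need that lemma for injectivity anyway.

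Your order ($\sum p$, then length, then number of letters $e^{\cdot,n,n}$, then indices) works as well as the paper's (length, $\sum(i+j)$, $\sum(p+k+l)$). It has the small advantage that DCC is automatic, since only finitely many words have a given $\sum p$.
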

\begin{proof}
For any $w=x_1\dots x_h\in \langle \mathcal{Y}\rangle$ we set 
 $m_1(w):=m_1(x_1)+\dots+m_1(x_h)$ and $m_2(w):=m_2(x_1)+\dots+m_2(x_h)$ where
\[m_1(e^{p,k,l}_{ij})=i+j\quad\text{ and }\quad m_2(e^{p,k,l}_{ij})=p+k+l\]
for any $1\leq p$, $1\leq k,l\leq n$ and $1\leq i,j\leq m^p$. We define a relation $\leq$ on $\langle \mathcal{Y}\rangle$ by 
\begin{align*}
w\leq w'~\Leftrightarrow~ &\Big [w=w'\Big ]~\lor~\Big[|w|<|w'|\Big]~\lor~ \Big[|w|=|w'|~\land~m_1(w)<m_1(w')\Big]\\
&\lor~ \Big[|w|=|w'|~\land~m_1(w)=m_1(w')~\land~m_2(w)<m_2(w')\Big].
\end{align*}
One checks easily that $\leq$ is a semigroup partial ordering on $\langle \mathcal{Y}\rangle$ which is compatible with the reduction system $T$ and satisfies the descending chain condition. 
It follows that every element of $K\langle \mathcal{Y}\rangle$ is reduction-finite (see the proof of \cite[Theorem 1.2]{bergman78}).


Let $X$ and $Y$ be the matrices defined in Remark \ref{remXY}. For $1\leq j\leq n$, we denote by $x_{\bullet j}$ the $j$-th column of $X$ and by $y_{j\bullet}$ the $j$-th row of $Y$. We denote the algebra homomorphism $K\langle \mathcal{Y} \rangle\to L(m,n)_0^{xy}$ mapping 
\begin{align*}
e^{p,k,l}\mapsto \underbrace{x_{\bullet 1}\star\dots\star x_{\bullet 1}}_{p-1\text{ factors}}\star x_{\bullet k}\star y_{l\bullet }\star \underbrace{y_{1\bullet }\star\dots\star y_{1\bullet}}_{p-1\text{ factors}}\quad(\text{entrywise})
\end{align*}
by $\phi$. Clearly $\phi$ preserves the relations in Definition \ref{defAmn} and therefore induces an algebra homomorphism $A(m,n)\to L(m,n)_0^{xy}$, which we also denote by $\phi$. We will show that \(\phi(\langle \mathcal{Y} \rangle_{\irr})\subseteq\mathcal{D}^{xy}\).

Let \(w = e^{p_1,k_1,l_1}_{i_1j_1} \dots e^{p_t,k_t,l_t}_{i_tj_t} \in 
\langle \mathcal{Y} \rangle_{\irr}\). For any $1\leq s\leq t$, write 
\[
\phi(e^{p_s,k_s,l_s}_{i_sj_s}) = x_{i_{s,1},1}\dots x_{i_{s,p_s-1},1}\;x_{i_{s,p_s},k_s}\;
      y_{l_s,j_{s,p_s}}\;y_{1,j_{s,p_s-1}}\dots y_{1,j_{s,1}}.
\]
By Lemma \ref{lemeasyind},
\begin{equation}
i_{s,r} = \operatorname{Mod}_m\!\Big(\Big\lceil\frac{i_s}{m^{\,r-1}}\Big\rceil\Big),\qquad
j_{s,r} = \operatorname{Mod}_m\!\Big(\Big\lceil\frac{j_s}{m^{\,r-1}}\Big\rceil\Big)
\qquad (r=1,\dots,p_s).
\end{equation}
Clearly each \(\phi(e^{p,k,l}_{ij})\) is an admissible prime \(xy\) word and therefore $\phi(w)$ is an admissible $xy$ word. Below we show that $\phi(w)$ satisfies the conditions (i) and (ii) in Lemma~\ref{lemDxyconsists}.
\begin{enumerate}[(i)]
\medskip
  \item For every \(1\leq s\leq t\) we have \((k_s,l_s)\neq(n,n)\); otherwise 
        the factor \(e^{p_s,n,n}_{i_sj_s}\) would be the left‑hand side of 
        reduction~(ii) of \(T\).
        \medskip
\item Suppose that 
\begin{equation}
(j_{s,P_s},\dots,j_{s,1},i_{s+1,1},\dots,i_{s+1,P_s})
        =(m,\dots,m,m,\dots,m).   
\end{equation}
for some $1\leq s\leq t-1$.\\
\\
        \textbf{Case 1}\\
        Suppose that \(p_s = p_{s+1}\). It follows from (1) and (2) that \(j_s = m^{p_s},\ i_{s+1}=m^{p_s}\). Hence the subword \(e^{p_s,k_s,l_s}_{i_s,m^{p_s}}e^{p_s,k_{s+1},l_{s+1}}_{m^{p_s},j_{s+1}}\) 
matches the left‑hand side of reduction~(i) of \(T\), contradicting irreducibility.\\
\\\textbf{Case 2}\\
Suppose that \(p_s < p_{s+1}\).  
Then $k_s\neq 1$ and $l_s=1$. Otherwise the 
        subword \(e^{p_s,k_s,l_s}_{i_s,j_s}e^{p_{s+1},k_{s+1},l_{s+1}}_{i_{s+1},j_{s+1}}\) 
        would be the left‑hand side of reduction~(iii) (if \(k_s=l_s=1\)) or reduction~(iv) 
        (if \(l_s\neq 1\)) of \(T\), contradicting irreducibility.\\
        \\
\textbf{Case 3}\\
Suppose that \(p_s > p_{s+1}\). Then $k_{s+1}= 1$ and $l_{s+1}\neq 1$. Otherwise the 
        subword \(e^{p_s,k_s,l_s}_{i_s,j_s}e^{p_{s+1},k_{s+1},l_{s+1}}_{i_{s+1},j_{s+1}}\) would be the left‑hand side of 
        rule~(v) (if \(k_{s+1}=l_{s+1}=1\)) or rule~(vi) (if \(k_{s+1}\neq 1\)) of \(T\), 
        contradicting irreducibility.
        \medskip
\end{enumerate}
Thus \(\phi(\langle \mathcal{Y} \rangle_{\irr})\subseteq\mathcal{D}^{xy}\). Clearly $\phi$ maps distinct words in $\langle \mathcal{Y} \rangle_{\irr}$ to distinct words in \(\mathcal{D}^{xy}\). This implies that the words in $\langle \mathcal{Y} \rangle_{\irr}$ are pairwise distinct in $A(m,n)$. Hence every element of $K\langle \mathcal{Y} \rangle$ is reduction-unique. It follows from \cite[Theorem 1.2]{bergman78} that the algebra $A(m,n)$ may be identified with the $K$-vector space $K\langle \mathcal{Y} \rangle_{\irr}$ made an algebra by the multiplication $a\cdot b = r_T(ab)$. 

Since $\phi$ maps distinct basis words to distinct basis words, $\phi$ is injective. Clearly the image of $\phi$ contains all admissible $xy$ words. Hence the image of $\phi$ contains the basis $\D^{xy}$, and therefore $\phi$ is surjective. Thus $A(m,n)\cong L(m,n)_0^{xy}$.
\end{proof}

Corollary \ref{cormain} below follows from Theorem \ref{thmfreeprod}, Lemma \ref{lemxymn=yxnm} and Theorem \ref{thmAmnbasis}.
\begin{corollary}\label{cormain}
$L(m,n)_0\cong A(m,n)\ast_K A(n,m)$.
\end{corollary}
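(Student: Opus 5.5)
The corollary is a matter of composing isomorphisms already established, so I would simply chain them. By Theorem \ref{thmfreeprod} there is an isomorphism $L(m,n)_0\cong L(m,n)_0^{xy}\ast_K L(m,n)_0^{yx}$. Theorem \ref{thmAmnbasis}, applied with the pair $(m,n)$, gives $L(m,n)_0^{xy}\cong A(m,n)$. For the second factor I would use Lemma \ref{lemxymn=yxnm} with the roles of $m$ and $n$ interchanged, which gives $L(m,n)_0^{yx}\cong L(n,m)_0^{xy}$. Theorem \ref{thmAmnbasis}, applied now to the pair $(n,m)$ (both are arbitrary positive integers, so this is legitimate), yields $L(n,m)_0^{xy}\cong A(n,m)$.

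The one remaining ingredient is functoriality of the free product of $K$-algebras. If $\alpha:B\to B'$ and $\beta:C\to C'$ are algebra isomorphisms, then the universal property of the coproduct in the category of unital $K$-algebras produces a homomorphism $\alpha\ast\beta:B\ast_K C\to B'\ast_K C'$. The same construction gives $\alpha^{-1}\ast\beta^{-1}$, and the two maps are mutually inverse, since each composite restricts to the identity on both factors. Alternatively, one can use the presentation description from the proof of Theorem \ref{thmfreeprod}: the free product is presented by the disjoint union of generators and relations, and isomorphisms of presentations transport directly.

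Combining these steps,
\[L(m,n)_0\cong L(m,n)_0^{xy}\ast_K L(m,n)_0^{yx}\cong A(m,n)\ast_K L(n,m)_0^{xy}\cong A(m,n)\ast_K A(n,m).\]

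There is no real obstacle. The only point needing care is the index swap in Lemma \ref{lemxymn=yxnm}: I would make sure it is applied in the direction $L(m,n)_0^{yx}\cong L(n,m)_0^{xy}$. This is obtained from the stated lemma by renaming $(m,n)\mapsto(n,m)$, or equivalently from the symmetry exchanging $x_{ij}$ with $y_{ji}$, which identifies $L(m,n)$ with the opposite-type presentation of $L(n,m)$.
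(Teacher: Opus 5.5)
Your proposal is correct and follows the paper's own argument: the paper obtains the corollary by chaining Theorem~\ref{thmfreeprod}, Lemma~\ref{lemxymn=yxnm} (used, as you do, in the form $L(m,n)_0^{yx}\cong L(n,m)_0^{xy}$) and Theorem~\ref{thmAmnbasis} applied to both $(m,n)$ and $(n,m)$. You also spell out the one step the paper leaves implicit, namely that isomorphisms of the factors induce an isomorphism of free products via the universal property of the coproduct.
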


\subsection{The algebras $A(m,n,z)$}
In the definition below we use the conventions from \cite[\S 2.1]{Raimund5}.
\begin{definition}\label{defAmnz}
Let $z\geq 1$. We denote the algebra presented by the generating set
\[\{e^{p,k,l}\mid 1\leq p\leq z,~1\leq k,l\leq n\},\]
where each $e^{p,k,l}$ is an $m^{p}\times m^{p}$ matrix whose entries are symbols, and the relations
\begin{enumerate}[(i)]
\medskip
\item $e^{p,k,l}e^{p,k',l'}=\delta_{l,k'}e^{p,k,l'}$ and
\medskip
\item $\sum_{k=1}^ne^{p,k,k}=\bigoplus^m e^{p-1,1,1}$
\medskip
\end{enumerate}
by $A(m,n,z)$. In relation (ii) we set $e^{0,1,1}:=(1)$.
\end{definition}

\begin{remark}\label{rmkAmnz}
One can show that $A(m,n,z)$ embeds into $L_0^{xy}$, see \S 4.1. Via the isomorphism $L_0^{xy}\cong A(m,n)$, we may identify $A(m,n,z)$ with the subring of $A(m,n)$ generated by all $e^{p,k,l}$ where $p\leq z$. Clearly $A(m,n)$ is the directed union of the subrings $A(m,n,z)$.
\end{remark}

\subsection{The algebras $B(m,n)$}
\begin{definition}\label{defBmn}
We denote the algebra presented by the generating set
\[\{e^{p,k,l}\mid 1\leq p,~1\leq k,l\leq n,~|k-l|\leq 1\},\]
where each $e^{p,k,l}$ is an $m^{p}\times m^{p}$ matrix whose entries are symbols, and the relations
\begin{enumerate}[(i)]
\medskip
\item $e^{p,k,l}e^{p,k',l'}=0$ if $l\neq k'$,
\medskip
\item $e^{p,k,l}e^{p,k',l'}=e^{p,k,l'}$ if $l=k'$ and $|k-l'|\leq 1$ and
\medskip
\item $\sum_{k=1}^ne^{p,k,k}=\bigoplus^m e^{p-1,1,1}$
\medskip
\end{enumerate}
by $B(m,n)$. In relation (iii) we set $e^{0,1,1}:=(1)$.
\end{definition}

\begin{theorem}\label{thmAmn=Bmn}
The algebras $A(m,n)$ and $B(m,n)$ are isomorphic.
\end{theorem}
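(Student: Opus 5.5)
Both algebras are given by generators and relations, so I would define mutually inverse homomorphisms and check that each preserves the other's relations. First, $\psi:B(m,n)\to A(m,n)$ sends each generator matrix $e^{p,k,l}$ of $B(m,n)$ (with $|k-l|\le 1$) to the matrix of the same name in $A(m,n)$. Relations (i)–(iii) of Definition~\ref{defBmn} are special cases of relations (i)–(ii) of Definition~\ref{defAmn}, so $\psi$ is well defined.

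For the converse I would write every $e^{p,k,l}$ as a product of "adjacent" matrix units, as in a presentation of matrix units via a path in the index set. In $B(m,n)$, for arbitrary $1\le k,l\le n$, set
\[
f^{p,k,l}:=\begin{cases} e^{p,k,k+1}e^{p,k+1,k+2}\cdots e^{p,l-1,l}, & k<l,\\ e^{p,k,k}, & k=l,\\ e^{p,k,k-1}e^{p,k-1,k-2}\cdots e^{p,l+1,l}, & k>l.\end{cases}
\]
Define $\theta:A(m,n)\to B(m,n)$ by $e^{p,k,l}\mapsto f^{p,k,l}$, entrywise. Relation (ii) of $A(m,n)$ is preserved because $f^{p,k,k}=e^{p,k,k}$ and $f^{p-1,1,1}=e^{p-1,1,1}$, so it becomes relation (iii) of $B(m,n)$.

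The main work is relation (i): $f^{p,k,l}f^{p,k',l'}=\delta_{l,k'}f^{p,k,l'}$.
\begin{itemize}
\item If $l\ne k'$, the product has $e^{p,*,l}$ as its last factor on the left and $e^{p,k',*}$ as its first factor on the right. By relation (i) of $B(m,n)$ this adjacent pair is $0$, so the whole product is $0$.
\item If $l=k'$, I would first prove two auxiliary identities from relation (ii) of $B(m,n)$: $e^{p,k,k}$ acts as a local identity, i.e.\ $e^{p,k,k}e^{p,k,l}=e^{p,k,l}=e^{p,k,l}e^{p,l,l}$; and the backtracking rule $e^{p,k,k\pm1}e^{p,k\pm1,k}=e^{p,k,k}$. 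Relation (ii) gives both directly, since $|k-k|=0\le1$.
\item Using these, the concatenated path from $k$ to $l$ to $l'$ reduces step by step to the monotone path from $k$ to $l'$. Each step cancels a backtrack $k\to k\pm1\to k$ or absorbs a diagonal factor, by induction on the total path length. The result is $f^{p,k,l'}$.
\end{itemize}
This path-cancellation induction is the only nontrivial bookkeeping, and I expect it to be the main (though routine) obstacle. In particular, the cases where one of the two indices pairs involves $k=l$ need separate handling.

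Finally I would check that the two maps are mutually inverse on generators. For $|k-l|\le 1$ we have $f^{p,k,l}=e^{p,k,l}$, so $\theta\psi=\mathrm{id}$ on the generators of $B(m,n)$. Conversely, $\psi(f^{p,k,l})$ is a product in $A(m,n)$ of $e^{p,j,j\pm1}$ (or $e^{p,k,k}$). By relation (i) of Definition~\ref{defAmn} this product collapses to $e^{p,k,l}$, so $\psi\theta=\mathrm{id}$ on the generators of $A(m,n)$. Hence $A(m,n)\cong B(m,n)$. No use of the Leavitt-algebra realisation of Theorem~\ref{thmAmnbasis} is needed.
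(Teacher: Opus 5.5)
Your proposal is correct and follows the paper's proof. Your $\theta$ is exactly the paper's $\phi$ (products of adjacent matrix units along the monotone path from $k$ to $l$), your $\psi$ is the paper's $\psi$, and your backtrack-cancellation induction via relation (ii) of $B(m,n)$ supplies the well-definedness and inverse checks that the paper leaves to the reader.
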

\begin{proof}
Define algebra homomorphisms $\phi:A(m,n)\to B(m,n)$ and $\psi:B(m,n)\to A(m,n)$ by (entrywise)\\
\[\phi(e^{p,k,l})=\begin{cases}
e^{p,k,l},\quad&\text{ if }|k-l|\leq 1,\\[0.5em]
e^{p,k,k-1}e^{p,k-1,k-2}\dots e^{p,l+1,l},\quad&\text{ if }k-l> 1,\\[0.5em]
e^{p,k,k+1}e^{p,k+1,k+2}\dots e^{p,l-1,l},\quad&\text{ if }k-l<-1,
\end{cases}\]
and 
\[\psi(e^{p,k,l})=e^{p,k,l}.\]

We leave it to the reader to check that $\phi$ and $\psi$ are well-defined and inverse to each other. 
\end{proof}

\subsection{The algebras $B(m,n,z)$}
\begin{definition}\label{defBmnz}
Let $z\geq 1$. We denote the algebra presented by the generating set
\[\{e^{p,k,l}\mid 1\leq p\leq z,~1\leq k,l\leq n,~|k-l|\leq 1\},\]
where each $e^{p,k,l}$ is an $m^{p}\times m^{p}$ matrix whose entries are symbols, and the relations
\begin{enumerate}[(i)]
\medskip
\item $e^{p,k,l}e^{p,k',l'}=0$ if $l\neq k'$,
\medskip
\item $e^{p,k,l}e^{p,k',l'}=e^{p,k,l'}$ if $l=k'$ and $|k-l'|\leq 1$ and
\medskip
\item $\sum_{k=1}^ne^{p,k,k}=\bigoplus^m e^{p-1,1,1}$
\medskip
\end{enumerate}
by $B(m,n,z)$. In relation (iii) we set $e^{0,1,1}:=(1)$.
\end{definition}

\begin{remark}\label{rmkBmnz}
One can show that $A(m,n,z)\cong B(m,n,z)$, see \S 4.3. Via the isomorphism $A(m,n)\cong B(m,n)$, we may identify $B(m,n,z)$ with the subring of $B(m,n)$ generated by all $e^{p,k,l}$ where $p\leq z$. Clearly $B(m,n)$ is the directed union of the subrings $B(m,n,z)$.
\end{remark}

Let $z\geq 1$. For $n\geq 2$, we define the Bergman graph (see \cite{Raimund5}) $H=H(m,n,z)$ by 
\begin{align*}
H^0&=\{v_{0,1}\}\cup \{v_{p,q}\mid 1\leq p\leq z, ~1\leq q\leq n\},\\
H^1_{\blue}&=\{g_{p}\mid 1\leq p\leq z\},\\
H^1_{\red}&=\{h_{p,q}\mid 1\leq p\leq z,~ 1\leq q\leq n-1\},\\
s(g_{p})&=\{\underbrace{v_{p-1,1},\dots,v_{p-1,1}}_{m\text{ times}}\},\quad r(g_{p})=\{v_{p,1},v_{p,2},\dots,v_{p,n}\},\\
s(h_{p,q})&=\{v_{p,q}\},\quad r(h_{p,q})=\{v_{p,q+1}\}.
\end{align*}
If $n=1$, we define $H(m,n,z)$ as the Bergman graph which has only one vertex and no hyperedges.

\begin{example}\label{exHmnz}
$H(2,3,3)$ is the Bergman graph below.\\
\[\xymatrix@C=35pt@R=18pt{
&&\mathcircled{v_{1,3}}&&\mathcircled{v_{2,3}}&&\mathcircled{v_{3,3}}\\
&&h_{2,1}\ar@[red][u]\ar@[red]@{-}[d]&&h_{2,2}\ar@[red][u]\ar@[red]@{-}[d]&&h_{3,2}\ar@[red][u]\ar@[red]@{-}[d]\\
&&\mathcircled{v_{1,2}}&&\mathcircled{v_{2,2}}&&\mathcircled{v_{3,2}}\\
&&h_{1,1}\ar@[red][u]\ar@[red]@{-}[d]&&h_{2,1}\ar@[red][u]\ar@[red]@{-}[d]&&h_{3,1}\ar@[red][u]\ar@[red]@{-}[d]\\
\mathcircled{v_{0,1}}\ar@/^1.2pc/@[blue]@{--}[r]\ar@/_1.2pc/@[blue]@{--}[r]&g_{1}\ar@[blue]@{-->}[r]\ar@[blue]@{-->}[uur]\ar@[blue]@{-->}[uuuur]&\mathcircled{v_{1,1}}\ar@/^1.2pc/@[blue]@{--}[r]\ar@/_1.2pc/@[blue]@{--}[r]&g_{2}\ar@[blue]@{-->}[r]\ar@[blue]@{-->}[uur]\ar@[blue]@{-->}[uuuur]&\mathcircled{v_{2,1}}\ar@/^1.2pc/@[blue]@{--}[r]\ar@/_1.2pc/@[blue]@{--}[r]&g_{3}\ar@[blue]@{-->}[r]\ar@[blue]@{-->}[uur]\ar@[blue]@{-->}[uuuur]&\mathcircled{v_{3,1}}~.
}\]
$~$\\
\end{example}

\begin{remark}\label{rmkBHmnz}
If $n=1$, then the Bergman algebra (see \cite{Raimund5}) of $H(m,n,z)$ is $K$. If $n\geq 2$, then the Bergman algebra of $H(m,n,z)$ can be constructed as follows. Start with the algebra $K$. Next adjoin a universal direct sum decomposition 
\[\bigoplus^m K=P^{(1)}_1\oplus\dots\oplus P^{(1)}_n\]
of the free $K$-module $\bigoplus^m K$. Then adjoin a universal direct sum decomposition
\[\bigoplus^m P_1^{(1)}=P^{(2)}_1\oplus\dots\oplus P^{(2)}_n.\]
of the finitely generated projective module $\bigoplus^m P_1^{(1)}$. Proceed like this until for any $1\leq p\leq z$, a universal direct sum decomposition 
 \[\bigoplus^m P_1^{(p-1)}=P^{(p)}_1\oplus\dots\oplus P^{(p)}_n\]
has been adjoined (where $P_1^{(0)}=K$). Now adjoin for any $1\leq p\leq z$ and $1\leq q\leq n-1$ a universal module isomorphism
\[P^{(p)}_q\cong P^{(p)}_{q+1}.\]
The resulting algebra is the Bergman algebra of $H(m,n,z)$.
\end{remark}

\begin{lemma}\label{lemBmnzpres}
If $n\geq 2$, then the Bergman algebra $B(H(m,n,z))$ has the presentation (cf. \cite[\S 2.1]{Raimund5})
\begin{align*}
\Big\l& \epsilon^{p,q},~\sigma^{p,q},~\hat\sigma^{p,q}~(1\leq p\leq z,~1\leq q\leq n-1)\Mid\\
&(\bigoplus^m\epsilon^{p-1,1})\epsilon^{p,q}(\bigoplus^m\epsilon^{p-1,1})=\epsilon^{p,q}~(1\leq p\leq z,~1\leq q\leq n-1),\\
&\epsilon^{p,q}\epsilon^{p,q'}=\delta_{q,q'}\epsilon^{p,q}~(1\leq p\leq z,~1\leq q\leq n-1),\\
&\epsilon^{p,q}\sigma^{p,q}\epsilon^{p,q+1}=\sigma^{p,q},~\epsilon^{p,q+1}\hat\sigma^{p,q}\epsilon^{p,q}=\hat\sigma^{p,q}~(1\leq p\leq z,~1\leq q\leq n-1),\\
&\sigma^{p,q}\hat\sigma^{p,q}=\epsilon^{p,q},~\hat\sigma^{p,q}\sigma^{p,q}=\epsilon^{p,q+1} ~(1\leq p\leq z,~1\leq q\leq n-1)\Big\r.
\end{align*}
Here, for any $1\leq p\leq z$ and $1\leq q\leq n-1$,
\begin{itemize}
\medskip
\item $\epsilon^{p,q}$, $\sigma^{p,q}$ and $\hat\sigma^{p,q}$ are $m^p\times m^p$ matrices whose entries are symbols, 
\medskip
\item $\epsilon^{0,1}=(1)$ and
\medskip
\item $\epsilon^{p,n}=(\bigoplus^m \epsilon^{p-1,1})-\epsilon^{p,1}-\dots-\epsilon^{p,n-1}$.
\medskip
\end{itemize} 
\end{lemma}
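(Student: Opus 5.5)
The plan is to unfold the definition of the Bergman algebra of a Bergman graph from \cite[\S 2.1]{Raimund5} for the particular graph $H=H(m,n,z)$, and then remove redundant generators and relations until the stated presentation remains. Recall the shape of that general presentation. To each vertex $v$ one attaches a square matrix $\epsilon^v$ of symbols, whose size is determined recursively by the blue hyperedges. For each blue hyperedge $g$ with $s(g)=\{u_1,\dots,u_a\}$ and $r(g)=\{w_1,\dots,w_b\}$ one imposes a universal decomposition: the $\epsilon^{w_j}$ are orthogonal idempotents, they sit inside $\bigoplus_i\epsilon^{u_i}$, and they sum to it. For each red hyperedge $h$ one adds matrices $\sigma^h,\hat\sigma^h$ that realise $P(s(h))\cong P(r(h))$, namely $\epsilon\sigma\epsilon=\sigma$, $\epsilon\hat\sigma\epsilon=\hat\sigma$, $\sigma\hat\sigma=\epsilon^{s(h)}$ and $\hat\sigma\sigma=\epsilon^{r(h)}$.

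First I would fix the sizes. The vertex $v_{0,1}$ is the free module $K$, so $\epsilon^{v_{0,1}}=(1)$. By induction on $p$, using $s(g_p)=\{v_{p-1,1}\}$ with multiplicity $m$, every vertex $v_{p,q}$ is a summand of $\bigoplus^m P^{(p-1)}_1$, which sits in $K^{m^p}$. Hence $\epsilon^{v_{p,q}}$ is an $m^p\times m^p$ matrix. Write $\epsilon^{p,q}:=\epsilon^{v_{p,q}}$, and set $\sigma^{p,q}:=\sigma^{h_{p,q}}$ and $\hat\sigma^{p,q}:=\hat\sigma^{h_{p,q}}$. The blue relations for $g_p$ then say four things:
\begin{itemize}
\item each $\epsilon^{p,q}$ is idempotent;
\item the $\epsilon^{p,q}$ are pairwise orthogonal;
\item each is absorbed on both sides by $\bigoplus^m\epsilon^{p-1,1}$;
\item $\sum_{q=1}^n\epsilon^{p,q}=\bigoplus^m\epsilon^{p-1,1}$.
\end{itemize}

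The main bookkeeping step is to eliminate $\epsilon^{p,n}$ using the last relation, i.e.\ to define $\epsilon^{p,n}:=(\bigoplus^m\epsilon^{p-1,1})-\sum_{q<n}\epsilon^{p,q}$. I then have to check that every relation involving $\epsilon^{p,n}$ follows from the relations listed in the lemma for $q\le n-1$. Idempotence of $\epsilon^{p,n}$, its orthogonality to $\epsilon^{p,q}$ for $q<n$, and its absorption by $\bigoplus^m\epsilon^{p-1,1}$ all follow by expanding and using three inputs: the $\epsilon^{p,q}$ with $q<n$ are orthogonal idempotents; each is absorbed by $E:=\bigoplus^m\epsilon^{p-1,1}$; and $E$ is itself idempotent, by the $\epsilon^{p-1,1}$ relations at level $p-1$ (or trivially when $p=1$). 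Conversely, the relations in the lemma are instances of the original ones. So the two presentations define the same algebra.

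One point I must check carefully is the indexing of the orthogonality relation. As written in the lemma it ranges over $1\le q\le n-1$, and I read it as $1\le q,q'\le n-1$. The relations with $q$ or $q'$ equal to $n$ are the derived ones just discussed.

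The red relations transcribe directly, with $s(h_{p,q})=v_{p,q}$ and $r(h_{p,q})=v_{p,q+1}$. For $q=n-1$ they involve the derived matrix $\epsilon^{p,n}$, which is exactly how the lemma is stated. I expect no real obstacle here. The only delicate point is matching the precise conventions of \cite[\S 2.1]{Raimund5}: whether the source of a blue hyperedge with repeated vertices gives the block-diagonal $\bigoplus^m\epsilon^{p-1,1}$, and which side the $\sigma$ relations are written on. Once those conventions are pinned down, the verification is routine algebra.
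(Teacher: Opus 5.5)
Your proposal is correct and takes essentially the paper's route. The paper's proof just invokes the definition of the Bergman algebra together with \cite[Lemmas 2.3 and 2.4]{Raimund5}, which supply the presentations of a universal decomposition (with the last idempotent $\epsilon^{p,n}$ eliminated via the two-sided absorption relation) and of a universal isomorphism. You carry out that same elimination by hand.
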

\begin{proof}
The assertion of the lemma follows from the definition of a Bergman algebra and \cite[Lemmas 2.3 and 2.4]{Raimund5}.
\end{proof}

\begin{theorem}\label{thmBmnz=BHmnz}
The algebra $B(m,n,z)$ is isomorphic to the Bergman algebra $B(H(m,n,z))$.
\end{theorem}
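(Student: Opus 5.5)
We construct mutually inverse algebra homomorphisms between $B(m,n,z)$ and the presentation of $B(H(m,n,z))$ given in Lemma \ref{lemBmnzpres}. We may assume $n\geq 2$. Indeed, if $n=1$, relation (iii) of Definition \ref{defBmnz} forces inductively $e^{p,1,1}=\bigoplus^m e^{p-1,1,1}=\dots=I_{m^p}$, so $B(m,1,z)\cong K\cong B(H(m,1,z))$ by Remark \ref{rmkBHmnz}.

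\emph{The map $\Phi:B(H(m,n,z))\to B(m,n,z)$.} We define it entrywise by
\[\epsilon^{p,q}\mapsto e^{p,q,q},\quad \sigma^{p,q}\mapsto e^{p,q,q+1},\quad \hat\sigma^{p,q}\mapsto e^{p,q+1,q}.\]
Then $\epsilon^{p,n}=(\bigoplus^m\epsilon^{p-1,1})-\sum_{q<n}\epsilon^{p,q}$ is sent to $e^{p,n,n}$ by relation (iii). We check the relations of Lemma \ref{lemBmnzpres} as follows.
\begin{itemize}
\item Orthogonality and idempotence: these follow from (i) and (ii) of Definition \ref{defBmnz} with $k=l=l'$.
\item The relations $\epsilon\sigma\epsilon=\sigma$, $\epsilon\hat\sigma\epsilon=\hat\sigma$, $\sigma\hat\sigma=\epsilon^{p,q}$ and $\hat\sigma\sigma=\epsilon^{p,q+1}$: these follow from (ii), since all indices involved differ by at most one.
\item The relation $(\bigoplus^m\epsilon^{p-1,1})\epsilon^{p,q}(\bigoplus^m\epsilon^{p-1,1})=\epsilon^{p,q}$: this follows because, by (iii), $\bigoplus^m e^{p-1,1,1}=\sum_k e^{p,k,k}$, and $e^{p,q,q}$ is absorbed by this sum on both sides via (i) and (ii).
\end{itemize}

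\emph{The map $\Psi:B(m,n,z)\to B(H(m,n,z))$.} We define it by
\[e^{p,k,k}\mapsto\epsilon^{p,k}\ (1\le k\le n),\quad e^{p,q,q+1}\mapsto\sigma^{p,q},\quad e^{p,q+1,q}\mapsto\hat\sigma^{p,q}.\]
Relation (iii) then holds by the definition of $\epsilon^{p,n}$. Relation (ii) in its nontrivial instances is exactly $\sigma\hat\sigma=\epsilon$, $\hat\sigma\sigma=\epsilon$, together with the absorption rules $\epsilon^{p,q}\sigma^{p,q}=\sigma^{p,q}=\sigma^{p,q}\epsilon^{p,q+1}$. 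These absorption rules follow from $\epsilon\sigma\epsilon=\sigma$ and idempotence.

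Relation (i), that products with mismatched middle indices vanish, is the main obstacle. Take for instance $\sigma^{p,q}\sigma^{p,q'}$ with $q+1\neq q'$. We write it as $\sigma^{p,q}\epsilon^{p,q+1}\epsilon^{p,q'}\sigma^{p,q'}$ and use the orthogonality $\epsilon^{p,q+1}\epsilon^{p,q'}=0$. The delicate cases are those involving $\epsilon^{p,n}$: its orthogonality with $\epsilon^{p,q}$ for $q<n$, and its idempotence, are not among the listed relations. We derive them from the first relation of Lemma \ref{lemBmnzpres}, namely that $\epsilon^{p,q}$ is absorbed by $E:=\bigoplus^m\epsilon^{p-1,1}$, together with the idempotence of $E$. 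The latter follows by induction on $p$ from the idempotence of $\epsilon^{p-1,1}$, with the base case $\epsilon^{0,1}=(1)$. These give
\[E\,\epsilon^{p,q}=\epsilon^{p,q}=\epsilon^{p,q}E,\]
hence $\epsilon^{p,n}\epsilon^{p,q}=\epsilon^{p,q}-\epsilon^{p,q}=0$, and similarly $(\epsilon^{p,n})^2=\epsilon^{p,n}$. The products involving $\sigma^{p,n-1}$ and $\hat\sigma^{p,n-1}$ (where $\epsilon^{p,n}$ appears) are handled via the same factorizations.

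Finally, $\Phi\circ\Psi$ and $\Psi\circ\Phi$ fix all generators, so they are the identity maps and the two algebras are isomorphic.
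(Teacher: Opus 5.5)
Your proposal is correct and follows the paper's proof: it uses the same reduction for $n=1$ and the same explicit isomorphism $\epsilon^{p,q}\mapsto e^{p,q,q}$, $\sigma^{p,q}\mapsto e^{p,q,q+1}$, $\hat\sigma^{p,q}\mapsto e^{p,q+1,q}$, and you also carry out the verification that the paper leaves to the reader. The key step there is deriving the idempotence of $\epsilon^{p,n}$ and its orthogonality to the $\epsilon^{p,q}$ ($q<n$) from the corner relation $E\epsilon^{p,q}E=\epsilon^{p,q}$, and you handle it correctly; the idempotence of $E$ needs no induction, since for $p\geq 2$ the matrix $\epsilon^{p-1,1}$ is itself a generator satisfying $\epsilon^{p-1,1}\epsilon^{p-1,1}=\epsilon^{p-1,1}$.
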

\begin{proof}
First suppose that $n=1$. Then $B(m,n,z)$ is the algebra generated by the set 
\[\{e_{ij}^{p,1,1}\mid 1\leq p\leq z,~1\leq i,j\leq m^p\}\]
modulo the relations (in matrix form) (i)-(iii) in Definition \ref{defBmnz}. Relation (iii) states that for any $1\leq p\leq z$, $e^{p,1,1}$ is the identity matrix of dimension $m^p\times m^p$. Hence each generator $e_{ij}^{p,1,1}$ either equals $0$ or $1$. Relation (ii) follows from relation (iii), and the condition in relation (i) is never satisfied. Thus $B(m,n,z)\cong K\cong B(H(m,n,z))$.

Now suppose that $n\geq 2$. We leave it to the reader to check that there is an isomorphism $\phi:B(H(m,n,z))\to B(m,n,z)$ such that
\begin{align*}
\phi(e^{p,q})=e^{p,q,q},\quad
\phi(\sigma^{p,q})=e^{p,q,q+1},\quad
\phi(\hat\sigma^{p,q})=e^{p,q+1,q}.
\end{align*}
\end{proof}

\begin{remark}\label{rmkBHmnzenq}
Supppose that $m=1$ and $n\neq 1$. Then
\[L(1,n)_0\cong B(1,n)\ast_K B(n,1)\cong B(1,n)\ast_K K\cong B(1,n)\cong \varinjlim_z B(1,n,z)\cong\varinjlim_z B(H(1,n,z)).\]
One can apply $z$ enqueuing moves to the Bergman graph $H(1,n,z)$ to obtain another Bergman graph $H'$ without any blue hyperedges (cf \cite[\S 4.3]{Raimund5}). By \cite[Theorem 4.16]{Raimund5}, $B(H(1,n,z))\cong B(H')$. Moreover, $H'$ has the property that each hyperedge has only one range and the multiplicity of this range is one. It follows that $B(H')$ is isomorphic to the Leavitt path algebra $L(E)$ of a finite directed graph $E$, cf. \cite[Example 3.20]{Raimund5}. Moreover, $E$ is acyclic and has a unique sink. Hence $L(E)$ is isomorphic to a matrix algebra, see \cite[Theorem 2.6.17]{abrams-ara-molina}. In this way one can recover the result that the $0$-component of $L(1,n)$ is a direct limit of matrix algebras, cf. \cite[Corollary 2.1.16]{abrams-ara-molina}.
\end{remark}

\subsection{The $\V$-monoid of $L(m,n)_0$}
Recall that the \textit{$\V$-monoid} $\V(R)$ of a ring $R$ is the set of all isomorphism classes of finitely generated projective $R$-modules. It becomes an abelian monoid by setting $[P]+[Q]=[P\oplus Q]$. The \textit{graded $\V$-monoid} $\V^{\gr}(R)$ of a graded ring $R$ is the set of all graded isomorphism classes of graded finitely generated projective $R$-modules, which  becomes an abelian monoid by setting $[P]+[Q]=[P\oplus Q]$. It is well-known that 
\[\V(R_0)\cong \V^{\gr}(R)\]
if $R$ is strongly graded (cf. \cite[\S 1.5]{hazrat16}).

The strong grading of $L(m,n)$ already guarantees $\V(L(m,n)_0)\cong \V^{\gr}(L(m,n))$. In this subsection we obtain an explicit presentation of $\V(L(m,n)_0)$ via Theorem~\ref{thmBmnz=BHmnz}, which matches the graded monoid description and thus reaffirms the isomorphism.

\begin{lemma}\label{lemVBmn}
The abelian monoid $\V(B(m,n))$ has the presentation
\[\V(B(m,n))\cong \big\langle v_p~(p\geq 0)\mid mv_p=nv_{p+1}~(p\geq 0)\big\rangle.\]
\end{lemma}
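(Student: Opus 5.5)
The plan is to compute $\V(B(m,n))$ as a direct limit of the $\V$-monoids of the Bergman algebras $B(H(m,n,z))$. By Remark \ref{rmkBmnz}, $B(m,n)$ is the directed union of the subalgebras $B(m,n,z)$. The functor $\V$ commutes with direct limits of rings: finitely generated projectives and isomorphisms between them are given by finitely many matrix entries. Hence
\[\V(B(m,n))\cong\varinjlim_z \V(B(m,n,z)).\]
By Theorem \ref{thmBmnz=BHmnz}, $B(m,n,z)\cong B(H(m,n,z))$. By the main result of \cite{Raimund5}, the $\V$-monoid of a Bergman algebra $B(H)$ is the graph monoid $M(H)$: it is generated by the vertices, with one relation $\sum s(h)=\sum r(h)$ for each hyperedge $h$.

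\textbf{Computing $M(H(m,n,z))$.} Assume $n\geq 2$. The blue hyperedge $g_p$ gives the relation $m\,v_{p-1,1}=v_{p,1}+\dots+v_{p,n}$. The red hyperedges $h_{p,q}$ give $v_{p,q}=v_{p,q+1}$. Using the red relations, all $v_{p,q}$ for fixed $p$ are identified with a single generator $v_p:=v_{p,1}$, with $v_0:=v_{0,1}$. The blue relations then become $m v_{p-1}=n v_p$. Therefore
\[M(H(m,n,z))\cong\langle v_0,\dots,v_z\mid mv_{p-1}=nv_p~(1\leq p\leq z)\rangle.\]
If $n=1$, the algebra is $K$ and the monoid is $\N_0$. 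This agrees with the same presentation: $v_p=m^p v_0$ eliminates all generators except $v_0$.

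\textbf{Identifying the transition maps.} I must check that the inclusion $B(m,n,z)\hookrightarrow B(m,n,z+1)$ induces the obvious map sending $v_p\mapsto v_p$. Under the isomorphism of Theorem \ref{thmBmnz=BHmnz}, the vertex $v_{p,q}$ corresponds to the idempotent matrix $e^{p,q,q}$, and $v_0$ corresponds to $(1)$, i.e.\ the free module of rank one. The inclusion sends each $e^{p,q,q}$ to the same matrix, so the classes $[e^{p,q,q}]$ go to the classes of the same name.

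\textbf{Taking the limit.} The direct limit of the presentations is the union presentation
\[\langle v_p~(p\geq 0)\mid mv_p=nv_{p+1}~(p\geq 0)\rangle.\]
The main obstacle is justifying precisely that the isomorphism $\V(B(H))\cong M(H)$ from \cite{Raimund5} is natural with respect to these inclusions. The intermediate algebras are not built from one another by Bergman's constructions in an obviously compatible way. I would handle this by tracking explicit generators: the isomorphism sends each vertex to the class of its idempotent matrix, and the inclusion preserves those matrices. A second point to check is that the relations in the direct limit are exactly the union of the finite-stage relations. This holds because any relation holding in the colimit monoid already holds at some finite stage.
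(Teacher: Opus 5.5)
Your proposal is correct and follows the paper's proof. Both arguments pass to the directed union of the $B(m,n,z)\cong B(H(m,n,z))$, read off $\V(B(H(m,n,z)))$ from the hyperedge relations via \cite[Remark 3.18(c)]{Raimund5}, use the red relations to eliminate the redundant generators $v_{p,q}$ with $q\geq 2$, and take the union of the resulting presentations. Your explicit check that the inclusions $B(m,n,z)\hookrightarrow B(m,n,z+1)$ send the idempotent matrices $e^{p,q,q}$ representing the vertices to themselves, so that the transition maps are $v_p\mapsto v_p$, is a detail the paper leaves implicit in ``$\V$ commutes with direct limits''.
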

\begin{proof}
First suppose that $n=1$. It follows from Theorem \ref{thmBmnz=BHmnz} that for any $z\geq 1$, the abelian monoid $\V(B(m,n,z))$ has the presentation
\begin{align*}
\V(B(m,n,z))\cong \V(K)\cong \N_0\cong\big\langle v_{p}~(0\leq p\leq z)\mid mv_{p-1}=v_{p}~(1\leq p\leq z-1)\big\rangle.
\end{align*} 
The statement of the lemma now follows from Remark \ref{rmkBmnz} and the fact that $\V$ commutes with direct limits.

Suppose now that $n\geq 2$. It follows from Theorem \ref{thmBmnz=BHmnz} and \cite[Remark 3.18(c)]{Raimund5} that for any $z\geq 1$, the abelian monoid $\V(B(m,n,z))$ has the presentation
\begin{align*}
\V(B(m,n,z))\cong \big\langle &v_{0,1},v_{p,q}~(1\leq p\leq z,~1\leq q\leq n)\mid\\[0.3em]
&(i)~mv_{p-1,1}=v_{p,1}+\dots+v_{p,n}\quad(1\leq p\leq z-1),\\[0.3em]
&(ii)~v_{p,q}=v_{p,q+1}\quad(1\leq p\leq z,~1\leq q\leq n-1)\big\rangle.
\end{align*}
After applying Tietze transformations of the type ``remove a redundant generator'' (see \cite[\S 6]{Raimund5}) and renaming the remaining generators, we obtain
\begin{align*}
\V(B(m,n,z))\cong \big\langle v_{p}~(0\leq p\leq z)\mid mv_{p-1}=nv_{p}~(1\leq p\leq z-1)\big\rangle.
\end{align*}
The statement of the lemma now follows from Remark \ref{rmkBmnz} and the fact that $\V$ commutes with direct limits.
\end{proof}

\begin{theorem}\label{thmVL0}
The abelian monoid $\V(L(m,n)_0)$ has the presentation
\[\V(L(m,n)_0)\cong \big\langle v_p ~(p\in \Z)\mid mv_p=nv_{p+1}~(p\in\Z)\big\rangle.\]
Consequently, $\V(L(m,n)_0)\cong \V^{\gr}(L(m,n))$.
\end{theorem}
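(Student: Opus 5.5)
Combine Corollary \ref{cormain} with Theorem \ref{thmAmn=Bmn} to get
\[L(m,n)_0\cong B(m,n)\ast_K B(n,m).\]
Then compute $\V$ of the free product with Bergman's theorem on coproducts of $K$-algebras: for a free product $R\ast_K S$ of $K$-algebras over the field $K$, $\V(R\ast_K S)$ is the pushout (amalgamated sum) of $\V(R)$ and $\V(S)$ over $\V(K)=\N_0$. That is, it is the coproduct of the two monoids with the classes of the free rank-one modules identified.

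The two factors are given by Lemma \ref{lemVBmn}. For the first,
\[\V(B(m,n))\cong\langle v_p~(p\ge0)\mid mv_p=nv_{p+1}\rangle,\]
where $v_0$ is the class of the regular module (this comes from the vertex $v_{0,1}$, with $P^{(0)}_1=K$). Applying the lemma with $m$ and $n$ swapped gives
\[\V(B(n,m))\cong\langle w_p~(p\ge0)\mid nw_p=mw_{p+1}\rangle,\]
where $w_0$ is again the class of the free module of rank one.

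Next glue along $v_0=w_0$. Rename $w_p$ as $v_{-p}$ for $p\ge 0$. The relation $nw_p=mw_{p+1}$ becomes $nv_{-p}=mv_{-p-1}$, which is exactly $mv_{q}=nv_{q+1}$ with $q=-p-1$. So the pushout has generators $v_p$ for $p\in\Z$ and relations $mv_p=nv_{p+1}$ for all $p\in\Z$, which is the claimed presentation.

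For the consequence, compute $\V^{\gr}(L(m,n))$ independently. Proposition \ref{propstronggrad} together with Dade's theorem already gives $\V(L(m,n)_0)\cong\V^{\gr}(L(m,n))$, as recalled at the start of this subsection. To make the isomorphism explicit, note that the graded $\V$-monoid is generated by the shifts $[L(p)]$, $p\in\Z$. The graded isomorphism $L(p)^m\cong L(p+1)^n$ is induced by $X$ and $Y$ from Remark \ref{remXY}, which are homogeneous of degree $\pm1$; matching the sign of the shift with the convention fixes $v_p\mapsto[L(\pm p)]$. Known presentations of $\V^{\gr}$ of weighted Leavitt path algebras, or Dade's equivalence, show that these are the only relations.

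The main obstacle I expect is justifying the free product step, that is, correctly invoking Bergman's coproduct theorem (\emph{Modules over coproducts of rings}). This needs a field as base ring, which we have, and it needs a careful identification of the distinguished element $v_0=w_0=[R]$ on both sides. A second delicate point is that Lemma \ref{lemVBmn}, as stated, indexes its relations slightly inconsistently ($p-1\le z-1$ versus $p\le z$). I will check that in the direct limit the relations are exactly $mv_p=nv_{p+1}$ for $p\ge0$ with $v_0$ the free class, since the gluing only works if that base point is correct.
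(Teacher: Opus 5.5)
Your proposal is correct and follows essentially the same route as the paper. The steps match: $L(m,n)_0\cong B(m,n)\ast_K B(n,m)$; Bergman's coproduct theorem identifies $\V$ of this free product with the pushout over $\V(K)$; Lemma~\ref{lemVBmn} is applied to both factors with $v_0=w_0$ the class of the free module of rank one; and the reindexing $w_p\mapsto v_{-p}$ produces the presentation over $\Z$.

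The one difference is in the final isomorphism. You lean on Dade's theorem via Proposition~\ref{propstronggrad}, whereas the paper matches its presentation against the known presentation of $\V^{\gr}(L(m,n))$ from \cite[\S 10.5]{Raimund6}. Your worry about the indexing in Lemma~\ref{lemVBmn} is harmless: the relation range in its proof should read $1\leq p\leq z$, and in the direct limit every relation $mv_p=nv_{p+1}$ with $p\geq 0$ appears, with $v_0$ the free class.
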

\begin{proof}
It follows from Corollary \ref{cormain} and Theorem \ref{thmAmn=Bmn} that $L(m,n)_0\cong B(m,n)\ast_K B(n,m)$. It follows from \cite[Corollary 2.8 and first paragraph after Corollary 2.11]{bergman74} that $\V(L(m,n)_0)$ is the pushout of the abelian monoids $\V(B(m,n))$ and $\V(B(n,m))$ over $\V(K)$ with respect to the homomorphisms $\V(K)\to\V(B(m,n))$ and $\V(K)\to\V(B(m,n))$ induced by the operations $-\otimes_K B(m,n)$ and $-\otimes_K B(n,m)$. It follows from Lemma \ref{lemVBmn} and the fact that the free $B(m,n)$ module of rank $1$ is represented by $v_0$ in the presentation given in Lemma \ref{lemVBmn} that
\begin{align*}
\V(L(m,n)_0)&\cong \big\langle v_p,w_p~(p\geq 0)\mid v_0=w_0,~mv_p=nv_{p+1},~nw_p=mw_{p+1}~(p\geq 0)\big\rangle\\
&\cong \big\langle v_p ~(p\in \Z)\mid mv_p=nv_{p+1}~(p\in\Z)\big\rangle.
\end{align*}
Thus $\V(L(m,n)_0)\cong \V^{\gr}(L(m,n))$ by \cite[\S 10.5]{Raimund6}.
\end{proof}

Recall that a ring $R$ has the \textit{Invariant Basis Number (IBN) property} if there are no positive integers $k\neq l$ such that the free $R$-modules $R^k$ and $R^l$ are isomorphic.

\begin{corollary}\label{corIBN}
$L(m,n)_0$ has the IBN property.
\end{corollary}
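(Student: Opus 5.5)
The plan is to use the presentation of $\V(L(m,n)_0)$ from Theorem \ref{thmVL0} and build a monoid homomorphism to $(\mathbb{R}_{\geq 0},+)$, or more concretely to $(\mathbb{Q}_{\geq 0},+)$, that detects the rank of free modules. In that presentation the class of the free module of rank one $[L(m,n)_0]$ corresponds to $v_0$. This is the generator identified in the proof of Theorem \ref{thmVL0} with the free module of rank one over both free factors. Consequently $[L(m,n)_0^k]$ corresponds to $kv_0$.

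The key step is to define $\mu$ on generators by $\mu(v_p)=(m/n)^p$ for all $p\in\Z$. We then check that the relations are respected:
\[\mu(mv_p)=m(m/n)^p=n(m/n)^{p+1}=\mu(nv_{p+1}).\]
Since the monoid is the free abelian monoid on the $v_p$ modulo these relations, $\mu$ extends to a well-defined monoid homomorphism
\[\mu:\big\langle v_p~(p\in\Z)\mid mv_p=nv_{p+1}\big\rangle\to(\mathbb{Q}_{>0}\cup\{0\},+).\]
Composing with the isomorphism of Theorem \ref{thmVL0} gives a homomorphism $\V(L(m,n)_0)\to\mathbb{Q}_{\geq0}$ sending $[L(m,n)_0]$ to $\mu(v_0)=1$.

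To conclude, suppose $L(m,n)_0^k\cong L(m,n)_0^l$. Then $kv_0=lv_0$ in $\V(L(m,n)_0)$, so applying $\mu$ gives $k=l$. Hence $L(m,n)_0$ has the IBN property.

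The only delicate point is confirming that, under the isomorphism in Theorem \ref{thmVL0}, the free module of rank one really is $v_0$ and not some other element. This is exactly what Bergman's pushout description used in that proof provides: $[B(m,n)]=v_0$ and $[B(n,m)]=w_0$ are identified, and both equal the image of $[K]$, which is $[L(m,n)_0]$ since $L(m,n)_0\cong B(m,n)\ast_K B(n,m)$. The degenerate case $m=n$ also needs a brief check. Here $\mu\equiv 1$ on all generators, so no special treatment is needed, and the argument goes through uniformly.
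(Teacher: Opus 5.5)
Your proof is correct and follows the same route as the paper: both use the presentation of $\V(L(m,n)_0)$ from Theorem \ref{thmVL0}, in which the free module of rank one is represented by $v_0$. The paper only asserts that $kv_0$ cannot be transformed into $lv_0$ using the relations, whereas your monoid homomorphism $\mu(v_p)=(m/n)^p$ into $(\mathbb{Q}_{\geq 0},+)$ is exactly the relation-invariant that makes this assertion rigorous.
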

\begin{proof}
Note that the free $L(m,n)_0$-module of rank $1$ is represented by $v_0(=\!w_0)$ in the presentation of $\V(L(m,n)_0)$ given in Theorem \ref{thmVL0}. If $k\neq l$ are positive integers, then clearly $kv_0\neq lv_0$ in $\V(L(m,n)_0)$ (since it is not possible to transform $kv_0$ into $lv_0$ using the relations given in the presentation). Thus $(L(m,n)_0)^k$
and $(L(m,n)_0)^l$ are not isomorphic if $k\neq l$. 
\end{proof}


\begin{thebibliography}{99}

\bibitem{aap05} G. Abrams, G. Aranda Pino, \emph{The Leavitt path algebra of a graph}, J. Algebra {\bf 293} (2005), no. 2, 319--334.

\bibitem{abrams-ara-molina} G. Abrams, P. Ara, M. Siles Molina, Leavitt path algebras, Lecture Notes in Mathematics {\bf 2191}, Springer, 2017.





\bibitem{Ara_Moreno_Pardo} P. Ara, M.A. Moreno, E. Pardo, \emph{Nonstable $K$-theory for graph algebras}, Algebr. Represent. Theory {\bf 10} (2007), no. 2, 157--178.



\bibitem{bergman74} G.M. Bergman, \emph{Modules over coproducts of rings}, Trans. Amer. Math. Soc. {\bf 200} (1974), 1--32.


\bibitem{bergman78} G.M. Bergman, \emph{The diamond lemma for ring theory}, 
Adv. in Math. {\bf 29} (1978), no. 2, 178--218.







\bibitem{hazrat13} R. Hazrat, \emph{The graded structure of Leavitt path algebras}, Israel J. Math. {\bf 195} (2013), no. 2, 833--895. 

\bibitem{hazrat16} R. Hazrat, Graded rings and graded Grothendieck groups, London Math. Society Lecture Note Series, Cambridge University Press, 2016. 






\bibitem{vitt56} W.G. Leavitt, \emph{Modules over rings of words}, Proc. Amer. Math. Soc. {\bf 7} (1956), 188--193. 

\bibitem{vitt57} W.G. Leavitt, \emph{Modules without invariant basis number,} Proc. Amer. Math. Soc. {\bf 8} (1957), 322--328.

\bibitem{vitt62} W.G.  Leavitt, \emph{The module type of a ring,} Trans. Amer. Math. Soc. {\bf 103} (1962) 113--130. 

\bibitem{vitt65} W.G. Leavitt, \emph{The module type of homomorphic images,} Duke Math. J. {\bf 32} (1965), 305--311.












\bibitem{Raimund6}  R. Preusser, \emph{Weighted Leavitt path algebras – an overview},
 Zap. Nauchn. Sem. POMI {\bf 531} (2024), 157--237.
 
\bibitem{Raimund5} R. Preusser, \emph{Moves for Bergman algebras}, 	arXiv:2407.00208 [math.RA], accepted by Algebra Colloq.


\end{thebibliography}
\end{document}